\newenvironment{thm}{\subsection{}{\textbf {Theorem.}}\em}{}
\newenvironment{prop}{\subsection{}{\textbf {Proposition.}}\em}{}
\newenvironment{cor}{\subsection{}{\textbf {Corollary.}}\em}{}
\newenvironment{lem}{\subsection{}{\textbf {Lemma.}}\em}{}
\newenvironment{eg}{\subsection{}{\textbf {Example.}}}{\smallskip}
\newcommand\fA{\ensuremath{\mathfrak A}}
\newcommand\cA{\ensuremath{\mathcal A}}
\newcommand\cB{\ensuremath{\mathcal B}}
\newcommand\cD{\ensuremath{\mathcal D}}
\newcommand\cE{\ensuremath{\mathcal E}}
\newcommand\cF{\ensuremath{\mathcal F}}
\newcommand\cH{\ensuremath{\mathcal H}}
\newcommand\cJ{\ensuremath{\mathcal J}}
\newcommand\cK{\ensuremath{\mathcal K}}
\newcommand\cL{\ensuremath{\mathcal L}}
\newcommand\cM{\ensuremath{\mathcal M}}
\newcommand\cQ{\ensuremath{\mathcal Q}}
\newcommand\cS{\ensuremath{\mathcal S}}
\newcommand\cT{\ensuremath{\mathcal T}}
\newcommand\cZ{\ensuremath{\mathcal Z}}
\newcommand\bbC{\ensuremath{\mathbb C}}
\newcommand\bbM{\ensuremath{\mathbb M}}
\newcommand\bbN{\ensuremath{\mathbb N}}
\newcommand\hilb{\ensuremath{\mathcal H}}
\newcommand\ol{\ensuremath{\overline}}
\newcommand\eop{{{\hfil \ensuremath \Box}}}
\newcommand\eps{\ensuremath {\varepsilon}}
\newcommand\norm{\ensuremath {\Vert}}
\renewcommand{\phi}{\varphi}
\renewcommand{\rho}{\varrho}
\begin{document}
\title{Residual finite dimensionality and representations of amenable operator algebras}

\thanks{The first author was partially supported by an FQRNT postdoctoral fellowship and an NSERC Discovery Grant.  The second author was partially supported by an NSERC Discovery grant.}
\author
	[R. Clou\^{a}tre]{{Rapha\"el Clou\^{a}tre}}
\address
	{Department of  Mathematics\\
	University of Manitoba\\
	Winnipeg, Manitoba \\
	Canada  \ \ \ R3T 2N2}
\email{raphael.clouatre@umanitoba.ca}
\author
	[L.W. Marcoux]{{Laurent W.~Marcoux}}
\address
	{Department of Pure Mathematics\\
	University of Waterloo\\
	Waterloo, Ontario \\
	Canada  \ \ \ N2L 3G1}
\email{LWMarcoux@uwaterloo.ca}

\begin{abstract}
We consider a version of a famous open problem formulated by Kadison, asking whether bounded representations of operator algebras are automatically completely bounded. We investigate this question in the context of amenable operator algebras, and we provide an affirmative answer for representations whose range is residually finite-dimen-sional. Furthermore, we show that weak-${}^*$ closed, amenable, residually finite-dimensional operator algebras are similar to $C^*$-algebras, and in particular have the property that all their bounded representations are completely bounded. We prove our results for operator algebras having the so-called total reduction property, which is known to be weaker than amenability.
\end{abstract}

\maketitle
\markboth{\textsc{R. Clou\^atre and L.W. Marcoux}}{\textsc{RFD and representations of amenable operator algebras}}

 
\section{Introduction} \label{sec01}

%


Let $B(\cH)$ denote the $C^*$-algebra of bounded linear operators on some complex Hilbert space $\cH$. Given a Banach algebra $\cA$, we shall refer to a continuous algebra homomorphism $\theta: \cA \to B(\cH)$ as a \emph{representation} of $\cA$.  In particular, representations of $C^*$-algebras are not assumed to be self-adjoint. One of the most  intriguing questions in the theory of $C^*$-algebras is the following, which stems from a 1955 paper of R.V.~Kadison~\cite{Kad1955}. 
\bigskip

\noindent{\textbf{Kadison's similarity problem.}} Let $\fA$ be a $C^*$-algebra and let $\theta:\fA\to B(\cH)$ be a representation. Does there exist an invertible operator $X \in B(\hilb)$ so that the representation $\theta_X: \fA \to B(\hilb)$ defined by
\[
\theta_X(a) = X^{-1} \theta(a) X, \quad a \in \fA
\]
is a ${}^*$-representation? 

\bigskip

We say that $\fA$ has \emph{Kadison's similarity property} if the problem above has an affirmative answer. It is still unknown to this day whether every $C^*$-algebra has this property, although some deep partial results have emerged throughout the years. For instance, E.~Christensen~\cite{Chr1981} has solved the problem in the affirmative for amenable $C^*$-algebras (we note here that due to sophisticated results of  A.~Connes~\cite{Con1978} and U.~Haagerup~\cite{Haa1983}, the classes of amenable and nuclear $C^*$-algebras are known to coincide). It was shown by Haagerup~\cite{Haa1983s} that Kadison's similarity problem has an affirmative answer for every $C^*$-algebra, provided that one restricts one's attention to only those representations which admit a finite cyclic set of vectors. Moreover, it is known~\cite{Haa1983s} that a representation of a $C^*$-algebra is similar to a ${}^*$-representation if and only if it is completely bounded.  This allows one to reformulate Kadison's similarity problem so that it makes sense in a non self-adjoint context. The following problem appeared in~\cite{Pis2001b}.  

\bigskip

\noindent{\textbf{The generalised similarity problem.}} Let $\cA$ be an operator algebra and let $\theta:\cA\to B(\cH)$ be a representation. Is $\theta$ necessarily completely bounded?
\bigskip

Following~\cite{Pis2001b}, we say that the $\cA$ has the \emph{SP property} if the problem above has an affirmative answer. In producing an example of a polynomially bounded operator which is not similar to a contraction --  thereby answering a long-standing open question (i.e. the Halmos Problem) as to the existence of such an operator -- Pisier demonstrated that the classical disc algebra is an example of an operator algebra without the SP property \cite{Pis1997}. On the other hand, a straightforward verification shows that if $\cA\subset B(\cH)$ has the SP property, then so does $X\cA X^{-1}$ for any invertible operator $X\in B(\cH)$. In particular, it follows from Christensen's result mentioned above that any operator algebra that is similar to an amenable $C^*$-algebra has the SP property. 

In the 1980's there arose the question of determining which operator algebras are similar to amenable $C^*$-algebras. Since amenability is preserved under Banach algebra isomorphisms -- of which similarity of operator algebras is an example -- an obvious restriction on such operator algebras is that they be amenable. It was conjectured by a number of people in the Banach algebra community that this was in fact the only obstruction, and that any amenable operator algebra was similar to a $C^*$-algebra. This conjecture was verified in some special cases. Indeed, it was shown to hold for uniform algebras by M.V. {\v S}e{\u \i}nberg \cite{Sei1977}, for abelian amenable subalgebras of finite von Neumann algebras by Y. Choi \cite{Cho2013}, and very recently for arbitrary abelian amenable operator algebras by the second author and A.~Popov~\cite{MP2015}. 

However, other recent developments have shown the conjecture to be incorrect in general,  and an example of a non-abelian, non-separable, amenable operator algebra which fails to be similar to a $C^*$-algebra was constructed in~\cite{CFO2014} by Y. Choi, I. Farah and N. Ozawa. It is not currently known whether a separable, amenable operator algebra must always be similar to a $C^*$-algebra. Nevertheless, this counterexample shows that a positive solution to the generalised similarity problem for amenable operator algebras cannot be achieved through this approach by relying on Christensen's result.  Interestingly, the counterexample does have the SP property, as shown in the companion paper \cite{CM2016id}. Thus, the question of whether every amenable operator algebra has the SP property remains unanswered and it motivated much of our efforts. In fact, we will be interested in a slightly more general question.

An illuminating insight into Kadison's similarity problem and the generalised similarity problem was offered by J.A.~Gifford in his PhD thesis~\cite{Gif1997}. Therein, he obtained a complete characterization of the $C^*$-algebras having Kadison's similarity property as those having a remarkably well behaved lattice of invariant subspaces, a feature he called the \emph{total reduction property} (the precise definition is given in Section \ref{secPr}). He also observed that the total reduction property is strictly weaker than amenability (we point out that the result of \cite{MP2015} mentioned above was in fact proved under this weaker assumption).

We can now state the basic question which we wish to address in this paper.

\bigskip

\noindent{\textbf{Question.}} Let $\cA$ be a norm-closed  operator algebra with the total reduction property.  Is every representation $\theta: \cA \to B(\hilb)$ completely bounded?

\bigskip

The reader will notice that this question is a special case of the  generalised similarity problem. Based on the discussion above, we believe it to be a meaningful step towards a complete understanding of the latter.

We now describe the organization of the paper, and state our main results. Section~\ref{secPr} deals with preliminaries: we give precise definitions of the concepts we require throughout, gather relevant results from the literature and prove some basic facts in preparation for later work. In Section~\ref{secRed}, we explain how the general problem we are trying to solve can be reduced to one involving matrix algebras, at least in the amenable case. 
%
%
%
%
Consequently, in trying to determine whether every amenable operator algebra has the SP property, it is no loss of generality to focus on representations whose \emph{domains} are residually finite-dimensional algebras.  Motivated by this observation, in Section~\ref{sec02} we examine the structure of subalgebras of products of matrix algebras that possess the total reduction property. This information is then leveraged to prove our first main result (see Theorem \ref{thm2.13} and Corollary \ref{cor2.14}).

\begin{thm}\label{thmmain1}
Let $(k(\lambda))_\lambda$ be a net of positive integers and suppose that $\cA \subset \prod_\lambda \bbM_{k(\lambda)}$ is a subalgebra with the total reduction property.  If $\cA$ is closed in the weak-${}^*$ topology, then $\cA$ is similar to a $C^*$-algebra and in particular it has the SP property.
\end{thm}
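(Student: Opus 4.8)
The plan is to analyze the structure of a weak-$*$ closed subalgebra $\cA \subset \prod_\lambda \bbM_{k(\lambda)}$ with the total reduction property and show it is similar, inside $\prod_\lambda \bbM_{k(\lambda)}$, to a self-adjoint subalgebra. First I would exploit the total reduction property locally: for each $\lambda$, composing with the coordinate projection $\pi_\lambda : \prod_\mu \bbM_{k(\mu)} \to \bbM_{k(\lambda)}$ gives a representation $\pi_\lambda|_{\cA}$ on the finite-dimensional space $\bbC^{k(\lambda)}$, and the total reduction property forces every invariant subspace of this representation to be complemented by an invariant subspace. A finite-dimensional algebra all of whose invariant subspaces are complemented by invariant subspaces is similar to a self-adjoint algebra (this is the classical fact that such an algebra is semisimple and one can average an inner product over its action); hence there is an invertible $X_\lambda \in \bbM_{k(\lambda)}$ with $X_\lambda^{-1}\, \pi_\lambda(\cA)\, X_\lambda$ self-adjoint. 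The crux is to do this \emph{uniformly} in $\lambda$: I would need a bound on $\|X_\lambda\|\,\|X_\lambda^{-1}\|$ independent of $\lambda$, so that $X = (X_\lambda)_\lambda$ defines an invertible element of $\prod_\lambda \bbM_{k(\lambda)}$ with bounded inverse.

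To get the uniform bound, I expect to invoke a quantitative version of the total reduction property: the hypothesis should yield a constant $C$ such that every invariant subspace $\cM$ of $\cA$ admits an invariant complement given by an idempotent of norm at most $C$ (this kind of uniform control is exactly what Gifford's analysis of the total reduction property provides, and it is presumably recorded in Section~\ref{secPr}). With such a $C$ in hand, the similarity $X_\lambda$ constructed by averaging can be taken with $\|X_\lambda\|\,\|X_\lambda^{-1}\|$ controlled solely by $C$ and not by $k(\lambda)$ — one builds the new inner product on $\bbC^{k(\lambda)}$ using the uniformly bounded idempotents projecting onto the irreducible pieces, or equivalently one uses that a uniformly completely bounded representation into $\bbM_n$ is uniformly similar to a $*$-representation with control depending only on the cb-norm. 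Setting $X = (X_\lambda)_\lambda$, the algebra $\fB := X^{-1}\cA X$ has the property that $\pi_\lambda(\fB)$ is self-adjoint for every $\lambda$.

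It then remains to check that $\fB$ itself is self-adjoint, i.e. closed under the adjoint inherited from $\prod_\lambda \bbM_{k(\lambda)}$. Here I would use that $\fB$ is weak-$*$ closed (similarity by a fixed bounded invertible element is a weak-$*$ homeomorphism, so $\fB$ inherits weak-$*$ closedness from $\cA$) together with the fact that each $\pi_\lambda(\fB)$ is a finite-dimensional $C^*$-algebra. Given $b = (b_\lambda)_\lambda \in \fB$, for each $\lambda$ the element $b_\lambda^* = \pi_\lambda(b)^*$ lies in $\pi_\lambda(\fB)$, so there is some $c^{(\lambda)} \in \fB$ with $\pi_\lambda(c^{(\lambda)}) = b_\lambda^*$; one must assemble these into a single element of $\fB$ equal to $b^*$. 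This is where weak-$*$ closedness does the work: the net of appropriate convex combinations / truncations of the $c^{(\lambda)}$ can be arranged to converge weak-$*$ to $b^*$, using that the $\pi_\lambda$ are weak-$*$ continuous and jointly separate points. Once $\fB = \fB^*$, it is a $C^*$-subalgebra of $\prod_\lambda \bbM_{k(\lambda)}$, and $\cA = X \fB X^{-1}$ is similar to a $C^*$-algebra; the SP property then follows from Christensen's theorem applied to $\fB$ (which is nuclear, being an inductive-limit-type subalgebra of matrix algebras — or more directly because $*$-representations of $C^*$-algebras are completely contractive) combined with the elementary observation from the introduction that the SP property is a similarity invariant.

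The step I expect to be the main obstacle is the \textbf{uniformity of the similarities $X_\lambda$}: without an a priori quantitative bound coming from the total reduction property, the pointwise similarities need not assemble into a bounded element of the product. Extracting the correct constant — and verifying that the averaging construction genuinely produces a similarity whose norm depends only on that constant and not on the dimension $k(\lambda)$ — is the technical heart of the argument, and is presumably why the structural analysis of Section~\ref{sec02} is needed rather than a one-line reduction.
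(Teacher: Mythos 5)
Your first step is sound and, contrary to your own assessment, is not where the difficulty lies: the uniform bound on the componentwise similarities $X_\lambda$ is exactly what Theorem \ref{thm2.04} supplies (the condition number is controlled by a function of $\kappa_\cA(1)$ alone, and $\kappa_{q^{\mathbf{k}}_\lambda(\cA)}(1)\leq \kappa_\cA(1)$ by Lemma \ref{lemkapparep}); this is how the paper carries out the same reduction in Theorem \ref{thm2.05}. The genuine gap is your final step. It is false that a weak-${}^*$ closed subalgebra $\cB\subset \prod_\lambda \bbM_{k(\lambda)}$ with every component $\pi_\lambda(\cB)$ self-adjoint must itself be self-adjoint. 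Take $\Lambda=\{1,2\}$, fix an invertible $X\in\bbM_n$ that is not a scalar multiple of a unitary, and let
\[
\cB=\left\{\, T\oplus X^{-1}TX \; : \; T\in \bbM_n \,\right\}\subset \bbM_n\oplus\bbM_n .
\]
This is a finite-dimensional (hence weak-${}^*$ closed) subalgebra with the total reduction property whose two components are both all of $\bbM_n$, yet $(T\oplus X^{-1}TX)^{*}=T^{*}\oplus X^{*}T^{*}(X^{*})^{-1}$ belongs to $\cB$ for all $T$ only if $XX^{*}$ commutes with every matrix, i.e.\ only if $X$ is a multiple of a unitary. Your proposed mechanism for assembling the elements $c^{(\lambda)}$ cannot be repaired: in this example no element of $\cB$ has the coordinates of $b^{*}$, so no weak-${}^*$ limit of convex combinations will produce it. The components separating points does not force $\cB$ to contain every element all of whose coordinates lie in the corresponding component algebras.

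This failure is precisely what the bulk of Section \ref{sec02} is designed to overcome, and it is the part missing from your proposal. After the componentwise reduction, the paper decomposes $\cA$ into minimal weak-${}^*$ closed ideals $\cK_\lambda$ using the central idempotents supplied by Theorem \ref{thmidealw*} together with Lemma \ref{lemsum}; each $\cK_\lambda$ is simple and sits inside the product exactly as a graph algebra $\bigoplus_\alpha X_{\lambda,\alpha}\, b\, X_{\lambda,\alpha}^{-1}$ of the type displayed above. The key technical point, Lemma \ref{lem2.10}, is that the total reduction property forces these \emph{internal} conjugating operators to satisfy $\|X_{\lambda,\alpha}\|\,\|X_{\lambda,\alpha}^{-1}\|\leq 4\kappa_{\cK_\lambda}(1)^{2}$ uniformly; this is proved by a trace computation on idempotents in the commutant whose ranges are the graph subspaces $\{\xi\oplus X_\nu^{-1}\xi\}$. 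Only with that second uniform bound in hand can one apply a further global similarity that straightens each $\cK_\lambda$ into a genuine diagonal copy of a matrix algebra, yielding a $C^{*}$-algebra. Without this second layer your argument does not close.
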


Although the condition of being weak-$*$ closed restricts the range of applicability of the previous result, we view it as noteworthy partial step towards clarifying the general situation. Finally, Section~\ref{sec04} is devoted to the study of representations whose \emph{range} is residually finite-dimensional, and our second main result is proved therein (see Corollary \ref{cor4.15}). Roughly speaking, it says that one has uniform control on the completely bounded norm of finite-dimensional representations in the presence of the total reduction property.

\begin{thm}\label{thm4.13}
Let $(k(\lambda))_\lambda$ be a net of positive integers, let $\cA$ be an operator algebra with the total reduction property and let $\theta:\cA\to \prod_\lambda \bbM_{k(\lambda)}$ be a representation. Then, $\theta$ is completely bounded.
\end{thm}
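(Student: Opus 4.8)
The plan is to reduce the statement to a dimension-independent bound on the completely bounded norms of the coordinate maps of $\theta$, and then to rule out the failure of that bound using the total reduction property together with the structure theory of Section~\ref{sec02}. Write $\pi_\lambda\colon\prod_\mu\bbM_{k(\mu)}\to\bbM_{k(\lambda)}$ for the coordinate $*$-homomorphisms and set $\theta_\lambda:=\pi_\lambda\circ\theta$. Since the canonical identifications $\bbM_n\bigl(\prod_\lambda\bbM_{k(\lambda)}\bigr)=\prod_\lambda\bbM_{nk(\lambda)}$ are isometric and norms on $\ell^\infty$-products are computed coordinatewise, one checks readily that $\|\theta\|_{\mathrm{cb}}=\sup_\lambda\|\theta_\lambda\|_{\mathrm{cb}}$. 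Each $\theta_\lambda$ is a bounded homomorphism into a full matrix algebra, hence automatically completely bounded (finite-dimensional range), but only with the dimension-dependent estimate $\|\theta_\lambda\|_{\mathrm{cb}}\le k(\lambda)\,\|\theta\|$; the entire content of the theorem is to upgrade these to a single finite bound, that is, to prove $\sup_\lambda\|\theta_\lambda\|_{\mathrm{cb}}<\infty$.

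Suppose this fails and choose indices $\lambda_m$ with $\|\theta_{\lambda_m}\|_{\mathrm{cb}}\to\infty$. Then $\rho:=\bigoplus_m\theta_{\lambda_m}$ is a representation of $\cA$ into the injective von Neumann algebra $\cM:=\prod_m\bbM_{k(\lambda_m)}$ with $\|\rho\|\le\|\theta\|$ but $\|\rho\|_{\mathrm{cb}}=\infty$. The image $\rho(\cA)$ inherits the total reduction property from $\cA$, since any bounded representation of $\rho(\cA)$ composed with $\rho$ is a bounded representation of $\cA$ with the same lattice of invariant subspaces, so that invariant subspaces of representations of $\rho(\cA)$ are again complemented by invariant subspaces. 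The first genuine task is to promote this to the weak-$*$ closure $\cB:=\overline{\rho(\cA)}^{w*}\subseteq\cM$: I would argue that $\cB$ still has the total reduction property, exploiting that the weak-$*$ topology of $\cM$ is governed coordinatewise, so that the invariant-subspace behaviour of a bounded representation of $\cB$ is controlled by its restriction to $\rho(\cA)$, together with the structural results developed in Section~\ref{sec02}.

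Granting that, Theorem~\ref{thmmain1} applies to the weak-$*$ closed algebra $\cB$: it is similar to a $C^*$-algebra, and by following the proof of that theorem --- which produces the implementing similarity coordinate by coordinate with uniformly bounded coordinate constants --- one may take the invertible $S$ in $\cM$ with $\sup_m\|S_m\|\,\|S_m^{-1}\|<\infty$, so that $\cD:=S^{-1}\cB S=\prod_m\cD_m$ with each $\cD_m\subseteq\bbM_{k(\lambda_m)}$ a finite-dimensional $C^*$-algebra; in particular $\cD$ is an injective von Neumann algebra sitting inside $\cM$. Conjugating, $\psi:=S^{-1}\rho(\cdot)S$ is a bounded homomorphism of $\cA$ into $\cD$ with $\|\psi\|_{\mathrm{cb}}=\infty$. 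The decisive --- and, I expect, hardest --- step is then to show that a bounded homomorphism of an operator algebra with the total reduction property into an injective von Neumann algebra is automatically completely bounded: here one adapts Christensen's averaging argument for nuclear $C^*$-algebras to the present non-self-adjoint setting, using a completely positive projection associated with $\cD$ together with the total reduction property of $\cA$ to manufacture a positive invertible intertwiner with respect to which a conjugate of $\psi$ becomes completely contractive. This contradicts $\|\psi\|_{\mathrm{cb}}=\infty$ and finishes the argument. The two points I expect to require real work are (i) the stability of the total reduction property under the weak-$*$ closure inside $\cM$, and (ii) this final Christensen-type estimate; keeping all constants independent of the dimensions $k(\lambda)$ throughout is precisely what separates this result from the open question stated in the introduction.
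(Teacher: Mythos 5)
Your opening reduction is correct and is exactly the paper's Lemma \ref{lem4.05}: it suffices to bound $\sup_\lambda\|q^{\mathbf{k}}_\lambda\circ\theta\|_{cb}$. From that point on, however, the argument rests on two claims that are not established and that the paper is at pains to avoid. First, you need the weak-${}^*$ closure $\cB=\overline{\rho(\cA)}^{w*}$ to inherit the total reduction property. Your justification only controls representations of $\cB$ through their restrictions to $\rho(\cA)$; but an invariant subspace for the image of $\rho(\cA)$ under a representation of $\cB$ need not be invariant for the image of all of $\cB$ unless that representation is weak-${}^*$ continuous, and a general bounded representation of $\cB$ is not. Whether the weak-${}^*$ closure of an algebra with the total reduction property again has that property is precisely the obstruction that forces weak-${}^*$ closedness to appear as a \emph{hypothesis} in Theorem \ref{thm2.13}; the paper does not prove it, and neither do you. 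Second, your ``decisive step'' --- that a bounded homomorphism of a total-reduction-property algebra into an injective von Neumann algebra is automatically completely bounded --- is asserted rather than proved, and observe that $\prod_\lambda\bbM_{k(\lambda)}$ is itself an injective von Neumann algebra: if that step were available, the theorem would follow in one line and the entire contradiction scaffolding (the choice of the $\lambda_m$, the weak-${}^*$ closure, the similarity $S$) would be superfluous. You have in effect relocated the whole difficulty into an unproved lemma, and it is not clear how a Christensen-style averaging, which exploits amenability of the \emph{domain}, would instead be driven by injectivity of the \emph{codomain}.

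The paper's route is different and sidesteps both issues. Each coordinate map $\theta_\lambda=q^{\mathbf{k}}_\lambda\circ\theta$ has finite-dimensional range, hence factors through the finite-dimensional quotient $\cA/\ker\theta_\lambda$, which has the total reduction property with constant function dominated by $\kappa_\cA$. Proposition \ref{prop4.11} realizes this quotient, up to completely bounded isomorphism with constant $2$, on a \emph{finite-dimensional} Hilbert space; Theorem \ref{thm2.04} then provides a similarity to a $C^*$-algebra with constant depending only on $\kappa_{\cA}(1)$ and not on $k(\lambda)$, and Gifford's quantitative similarity theorem (Theorem \ref{thmGifKSP}) converts this into the dimension-free bound $\|\theta_\lambda\|_{cb}\leq\Delta$ of Theorem \ref{thm4.12}, with $\Delta$ depending only on $\kappa_\cA$ and $\|\theta\|$. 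If you wish to salvage your outline, the missing ingredient is exactly such a dimension-independent completely bounded estimate for the individual finite-dimensional coordinate maps; Smith's bound $k(\lambda)\|\theta\|$, which you correctly identify as insufficient, must be replaced by the uniform bound of Theorem \ref{thm4.12} rather than by a global argument on the product algebra.
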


The previous result fails without the total reduction property (Example \ref{E:trprfd}).

	

\section{Preliminaries and background material}\label{secPr}

\subsection{Operator algebras and completely bounded maps}

Throughout, by an \emph{operator algebra} we mean a subalgebra of some $B(\cH)$ which is closed in the norm topology.  

If $\cA\subset B(\cH)$ is an operator algebra, then for each integer $n \ge 1$ the algebra $\bbM_n(\cA)$ inherits a norm when viewed as a subalgebra of the $C^*$-algebra $B(\hilb^{(n)})$. If $\cB$ is another operator algebra, then a linear map $\phi: \cA \to \cB$ induces a sequence of maps 
\[
\phi^{(n)}: \bbM_n(\cA) \to \bbM_n(\cB),\quad n \ge 1
\]
by setting 
\[
\phi^{(n)}( [a_{i,j}] )= [ \phi(a_{i,j})]
\]
for all $[a_{i,j}] \in \bbM_n(\cA)$.   The map $\phi$ is \emph{completely bounded} if the quantity
\[
\norm \phi \norm_{cb} = \sup_{n \ge 1} \norm \phi^{(n)} \norm
\]
 is finite.  The map $\phi$ is said to be \emph{completely contractive} (respectively, \emph{completely isometric}) if each $\phi^{(n)}$ is contractive (respectively, isometric). We refer the reader to \cite{Pau2002} for a thorough exposition of the theory of completely bounded maps and of operator algebras.

\subsection{Amenability and the total reduction property}

We recall the notion of amenability of a Banach algebra, which was introduced by Johnson in \cite{Joh1972}. Let $\cA$ be a Banach algebra and let $X$ be a Banach $\cA$-bimodule.
If $X^*$ denotes the dual space of $X$, then $X^*$ also carries the structure of an $\cA$-bimodule under the dual actions  defined by 
\[
(a x^*) (x) = x^* ( xa) \ \ \ \ \ \ \ \mbox{ and } \ \ \ \ \ \ \ \ \ (x^* a) (x) = x^* (ax )
 \]
for all $a \in \cA$, $x^* \in X^*$ and $x \in X$.   When it is equipped with this precise module action inherited from that of $\cA$ on $X$, we say that $X^*$ is a \emph{dual Banach $\cA$-bimodule}.

A \emph{derivation} $\delta: \cA \to X$ is a continuous linear map which satisfies 
\[
\delta (a b) = \delta(a) b + a \delta (b)
\]
for all $a, b \in \cA$.  The derivation is \emph{inner} if there exists a fixed element $z \in X$ so that 
\[
\delta (a) = a z - z a
\]
for all $a \in \cA$.  Finally, we say that $\cA$ is \emph{amenable} if every derivation of $\cA$ into a dual Banach $\cA$-bimodule is inner.

As mentioned in the introduction, we will be mostly concerned with a notion, introduced in the thesis of J.A.~Gifford \cite{Gif1997},  that is weaker than amenability.  An operator algebra $\cA $ is said to have the \emph{total reduction property} if, whenever $\theta: \cA \to B(\cH)$ is a representation and $M \subset \hilb$ is a closed $\theta(\cA)$-invariant subspace, there exists another closed $\theta(\cA)$-invariant subspace $N$ which is a topological complement of $M$, in the sense that $\hilb = M + N$ and $M \cap N  = \{ 0\}$. Equivalently, any closed $\theta(\cA)$-invariant subspace is the range of some bounded idempotent lying in the commutant $\theta(\cA)'$.

 More precise information about these idempotents is available \cite[Proposition 2.2.13]{Gif1997}. Indeed, for each $t\geq 0$ there is a positive constant $C(t)$ with the property that whenever $\theta:\cA\to B(\cH)$ is a representation with $\|\theta\|\leq t$ and $M\subset \cH$ is a closed $\theta(\cA)$-invariant subspace, there is an idempotent $E\in \theta(\cA)'$ with $E\cH=M$ and $\|E\|\leq C(t)$.
For each $t\geq 0$, we denote by $\kappa_\cA(t)$ the minimum of all positive constants $C(t)$ satisfying this condition. Clearly, $\kappa_\cA$ is an increasing function. We single out another one of its basic properties.
 
\begin{lem}\label{lemkapparep}
 Let $\cA$ be an operator algebra with the total reduction property and let $\rho$ be a representation of $\cA$ such that $\varrho(\cA)$ is closed. Then, $\varrho(\cA)$ has the total reduction property. Moreover, if $\theta$  is a representation of $\rho(\cA)$, then
\[
\kappa_{\varrho(\cA)}(\| \theta \|)\leq \kappa_{\cA}(\|\theta \circ \varrho\|).
\]
In particular, for every $t>0$ we see that
\[
\kappa_{\varrho(\cA)}(t)\leq \kappa_{\cA}(t\| \varrho\|).
\]
\end{lem}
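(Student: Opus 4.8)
The plan is to show directly that $\varrho(\cA)$ has the total reduction property by pulling back invariant subspaces along $\varrho$, and to track the norm of the resulting idempotent. First I would fix a representation $\theta : \varrho(\cA) \to B(\cH)$ and a closed $\theta(\varrho(\cA))$-invariant subspace $M \subset \cH$. The key observation is that $\theta \circ \varrho$ is a representation of $\cA$ on $\cH$, and a subspace is invariant for $\theta(\varrho(\cA))$ if and only if it is invariant for $(\theta \circ \varrho)(\cA)$, since $\varrho(\cA)$ and $(\theta\circ\varrho)(\cA)$ have the same image. Hence $M$ is $(\theta\circ\varrho)(\cA)$-invariant, and because $\cA$ has the total reduction property there is an idempotent $E \in (\theta\circ\varrho)(\cA)'$ with $E\cH = M$ and $\|E\| \le \kappa_\cA(\|\theta\circ\varrho\|)$. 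The same equality of images gives $(\theta\circ\varrho)(\cA)' = \theta(\varrho(\cA))'$, so $E$ lies in $\theta(\varrho(\cA))'$ as required. This simultaneously establishes that $\varrho(\cA)$ has the total reduction property and yields the bound $\kappa_{\varrho(\cA)}(\|\theta\|) \le \kappa_\cA(\|\theta\circ\varrho\|)$, since the left-hand side is the infimum of admissible constants and we have exhibited an idempotent with the stated norm; one should note that for this to make sense we only need $\|\theta\circ\varrho\| \ge$ the relevant quantity, which is automatic.

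For the final assertion, I would simply combine the previous inequality with submultiplicativity of the operator norm: for any representation $\theta$ of $\varrho(\cA)$ with $\|\theta\| \le t$ we have $\|\theta \circ \varrho\| \le \|\theta\|\,\|\varrho\| \le t\|\varrho\|$, and since $\kappa_\cA$ is increasing we obtain $\kappa_{\varrho(\cA)}(t) \le \kappa_\cA(t\|\varrho\|)$.

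I do not anticipate a serious obstacle here; the content is essentially bookkeeping. The one point that deserves a moment of care is the passage from "there exists an admissible constant $C$ with $C \le \kappa_\cA(\|\theta\circ\varrho\|)$" to the clean statement about $\kappa_{\varrho(\cA)}$: one must check that the function $t \mapsto \kappa_{\varrho(\cA)}(t)$ is genuinely controlled at the specific value $t = \|\theta\|$ rather than only at $\|\theta\circ\varrho\|$, and here the monotonicity of $\kappa_\cA$ together with the inequality $\|\theta\circ\varrho\| \le \|\theta\|\,\|\varrho\|$ does the work; strictly speaking the middle displayed inequality as stated compares $\kappa_{\varrho(\cA)}(\|\theta\|)$ with $\kappa_\cA(\|\theta\circ\varrho\|)$, which is exactly what the pullback argument delivers. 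A secondary subtlety is the hypothesis that $\varrho(\cA)$ be closed: this is what guarantees $\varrho(\cA)$ is itself an operator algebra in the sense used in the paper, so that the total reduction property is even defined for it, and it ensures that representations $\theta$ of $\varrho(\cA)$ in the usual bounded sense are the right objects to quantify over.
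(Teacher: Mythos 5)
Your argument is correct and is essentially the proof given in the paper: pull the invariant subspace back along $\varrho$ via the representation $\theta\circ\varrho$ of $\cA$, observe that $(\theta\circ\varrho)(\cA)'=\theta(\varrho(\cA))'$, apply the total reduction property of $\cA$ to get the idempotent with the stated norm bound, and finish with $\|\theta\circ\varrho\|\le\|\theta\|\,\|\varrho\|$ and monotonicity of $\kappa_\cA$. The only cosmetic difference is that the paper cites Gifford's Proposition 3.3.1 for the fact that $\varrho(\cA)$ has the total reduction property, whereas you extract it directly from the same pullback argument, which is fine.
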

\begin{proof}
The fact that $\rho(\cA)$ has the total reduction property is simply \cite[Proposition 3.3.1]{Gif1997}. 

Next, let $\theta:\varrho(\cA)\to B(\cH)$ be a representation and let $M\subset \cH$ be a closed $\theta(\rho(\cA))$-invariant subspace. Since $\cA$ has the total reduction property, there is an idempotent $E\in \theta(\rho(\cA))'$ such that $E\cH=M$ and $\|E\|\leq \kappa_\cA(\|\theta\circ \rho\|)$. We conclude that
\[
\kappa_{\rho(\cA)}(\|\theta\|)\leq \kappa_\cA(\|\theta\circ \rho\|)\leq \kappa_\cA(\|\theta\| \|\rho\|).\]
\end{proof}
 
We mention in passing that if, in the above result, we do not assume that $\rho(\cA)$ is closed, then a similar argument shows that $\overline{\rho(\cA)}$ has the total reduction property, and that $\kappa_{\overline{\rho(\cA)}}(t) \le \kappa_\cA(t \| \rho \|)$ for all $t \geq  0$.

\smallskip

It is relevant to point out that amenability implies the total reduction property \cite[Proposition 2.3.2]{Gif1997}. However, the latter is strictly weaker than amenability:  if $\cH$ is an infinite-dimensional Hilbert space, then $B(\cH)$ has the total reduction property \cite[Corollary~2.4.7 ]{Gif1997} but it is not amenable since it is not nuclear \cite{Sza1981},\cite{Con1978}.

Before proceeding, we gather here several facts about operator algebras with the total reduction property that we require numerous times in the sequel. First, we consider ideals.

\begin{thm}\label{thmideal}
Let $\cA$ be an operator algebra with the total reduction property and let $\cJ\subset \cA$ be a closed two-sided ideal. Then, $\cJ$ has the total reduction property. Moreover, $\cJ$ admits a bounded approximate identity: there exists a bounded net $(e_i)_i$ in $\cJ$ such that
\[
\lim_i \|ae_i-a\|=\lim_i \|e_i a-a\|=0
\]
for every $a\in \cJ$.
\end{thm}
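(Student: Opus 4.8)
The plan is to prove the two assertions in turn. For the total reduction property of $\cJ$, I would argue as follows. Let $\psi: \cJ \to B(\cH)$ be a representation and let $M \subset \cH$ be a closed $\psi(\cJ)$-invariant subspace. The key difficulty is that $\psi$ is only defined on the ideal, not on all of $\cA$, so I cannot directly invoke the total reduction property of $\cA$. The standard remedy is to pass to an essential subspace and extend $\psi$ to $\cA$. Concretely, since $\cJ$ has a bounded approximate identity — which is exactly the second assertion, so I would prove that part first — one can write $\cH = \cH_0 \oplus \cH_{\mathrm{ess}}$, where $\cH_{\mathrm{ess}} = \overline{\psi(\cJ)\cH}$ and $\psi$ vanishes on $\cH_0$. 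On $\cH_{\mathrm{ess}}$ the representation is nondegenerate, and using the bounded approximate identity together with a Cohen factorization argument (every element of $\cJ$ factors as a product of two elements of $\cJ$) one can extend $\psi|_{\cH_{\mathrm{ess}}}$ to a representation $\widetilde\psi: \cA \to B(\cH_{\mathrm{ess}})$ by the usual recipe $\widetilde\psi(a)\psi(x)\xi = \psi(ax)\xi$. Then $M \cap \cH_{\mathrm{ess}}$ is $\widetilde\psi(\cA)$-invariant (one checks it is also $\psi(\cJ)$-invariant and that these coincide on the essential part), so the total reduction property of $\cA$ produces a complemented invariant subspace, and adding back the trivial piece inside $\cH_0$ handles $M$ in general. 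I would carry this out carefully, tracking that the idempotent obtained commutes with $\psi(\cJ)$.

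For the bounded approximate identity, the natural route is via the total reduction property applied to $\cJ$ itself (now legitimately, since we may take representations of $\cJ$ directly). The classical theorem here is that a Banach algebra has a bounded approximate identity provided it has a bounded \emph{left} and a bounded \emph{right} approximate identity, and a one-sided version can be extracted from a "local" identity together with a uniform norm bound. The approach I favor: consider the universal representation, or more concretely observe that for a separable-type reduction it suffices to show that for each finite subset $F = \{a_1,\dots,a_n\} \subset \cJ$ and each $\varepsilon > 0$ there is $e \in \cJ$ with $\|e\| \le C$ (a constant independent of $F,\varepsilon$) and $\|a_j e - a_j\| < \varepsilon$, $\|e a_j - a_j\| < \varepsilon$ for all $j$; then a standard net/Banach-limit argument assembles the approximate identity. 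To produce such $e$, take a faithful representation $\cJ \subset B(\cH)$ and let $M$ be the closed subspace generated by $a_1\cH + \cdots + a_n\cH$ together with $\cH a_1^* + \cdots$; actually cleaner is to use that $\cJ$ being an ideal in $\cA$ (with the total reduction property) forces the left and right regular representations of $\cJ$ on suitable subspaces of $\cA$ to have complemented ranges, which yields the one-sided approximate identities with uniform bound governed by $\kappa_\cA$.

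I expect the main obstacle to be the bounded approximate identity, specifically producing the \emph{uniform} norm bound on the approximate identity elements — the total reduction property gives complemented invariant subspaces with idempotents bounded by $\kappa_\cA$ evaluated at the relevant representation norm, and one must check that the representations arising here (regular-type representations of $\cJ$ built from $\cA$) have norms controlled independently of the finite sets involved, so that $\kappa_\cA$ is applied at a fixed argument. The extension/factorization step for the total reduction property of $\cJ$ is technically delicate but conceptually routine once the approximate identity is in hand; I would present the approximate identity first and then the total reduction property, reusing it.
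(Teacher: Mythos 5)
You should first be aware that the paper does not actually prove this statement: its entire ``proof'' is a citation to Propositions 3.2.7 and 3.3.3 of Gifford's thesis, where the bounded approximate identity is established first and then used to transfer the total reduction property to the ideal. So your overall architecture (bounded approximate identity first, then extend representations of $\cJ$ to $\cA$ on the essential part) is the standard one and matches the source the authors rely on. The problem is that the part you yourself identify as the main obstacle --- the existence of the bounded approximate identity --- is exactly the substantive content of the theorem, and your sketch does not prove it. The reduction to finding, for each finite $F\subset\cJ$ and $\varepsilon>0$, a single element $e\in\cJ$ with $\|e\|\le C$ and $\|a_je-a_j\|,\|ea_j-a_j\|<\varepsilon$ is fine, but you never construct such an $e$. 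The one concrete suggestion you offer --- that ``the left and right regular representations of $\cJ$ on suitable subspaces of $\cA$ have complemented ranges'' --- cannot be an application of the total reduction property as defined here, because that property only concerns representations on \emph{Hilbert spaces}, and $\cA$ is not one; you appear to be importing a Banach-module complementation argument from the amenable setting, which is not available under the weaker hypothesis of this paper. Gifford's actual argument produces the local units from carefully chosen invariant subspaces of ampliations of the given Hilbert-space representation together with a convexity (Mazur-type) upgrade from strong to norm approximation; none of that is present in your proposal, so the first assertion of the theorem is unproved.

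A secondary, repairable error: for a representation $\psi$ of $\cJ$ that is not self-adjoint, it is false in general that $\cH$ decomposes orthogonally as $\cH_0\oplus\cH_{\mathrm{ess}}$ with $\psi$ vanishing on $\cH_0$. What is true is that every $\psi(x)$ has range inside $\cH_{\mathrm{ess}}=\ol{\psi(\cJ)\cH}$, so with respect to $\cH=\cH_{\mathrm{ess}}\oplus\cH_{\mathrm{ess}}^{\perp}$ the operators $\psi(x)$ are block upper triangular with a possibly nonzero off-diagonal corner; killing that corner amounts to complementing $\cH_{\mathrm{ess}}$ by a $\psi(\cJ)$-invariant subspace, which is dangerously close to the conclusion you are trying to reach. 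This can be handled (for instance by working with $\ol{\psi(\cJ)M}$ and the quotient action directly, using Cohen factorization as you suggest), but as written the degenerate part of the representation is not correctly disposed of. In short: the second half of your plan is conceptually sound but needs repair at the decomposition step, and the first half --- the bounded approximate identity --- is missing its proof.
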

\begin{proof}
This follows from \cite[Propositions 3.2.7 and 3.3.3]{Gif1997}. 
\end{proof}

Next, we deal with a minor technical detail. Recall that an algebra $\cA\subset B(\cH)$  \emph{acts non-degenerately} if $\ol{\cA \cH}=\cH$.  While operator algebras with the total reduction property do not necessarily act non-degenerately, they  nearly do so.

\begin{lem}\label{lemnondeg}
Let $\cA\subset B(\cH)$ be an operator algebra with the total reduction property. Then, there exists an invertible operator $X\in B(\cH)$ 
with
\[
\|X\| =\|X^{-1}\|\leq 1+\kappa_A(1)
\]
such that 
\[
X\cA X^{-1}=\cA_0\oplus \{0\}
\]
according to some orthogonal decomposition $\cH=\cH_0\oplus \cH_0^\perp$. Moreover, $\cA_0 \subset B(\cH_0)$ is a non-degenerately acting subalgebra with the total reduction property.
\end{lem}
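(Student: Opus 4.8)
The plan is to produce the invertible operator $X$ by applying the total reduction property to a single, carefully chosen invariant subspace. Consider the identity representation $\cA \hookrightarrow B(\cH)$ and set $\cH_1 = \overline{\cA\cH}$, the essential subspace. This is a closed $\cA$-invariant subspace, so by the total reduction property there is an idempotent $E \in \cA'$ with $E\cH = \cH_1$ and $\|E\| \leq \kappa_\cA(1)$ (the identity representation has norm at most $1$, being an inclusion of an operator algebra, or one normalizes so that it does). The orthogonal complement of the range of $E$ is $\cH_1^\perp$ in a topological but not orthogonal sense; what is genuinely true is that $\ker E$ is a closed complement of $\cH_1$. I would then invoke the standard fact that an idempotent $E$ with $\|E\| = c$ can be conjugated to the orthogonal projection onto its range: there is an invertible $X$ with $X E X^{-1} = P$, where $P$ is the orthogonal projection onto $\cH_0 := \cH_1$, and with $\|X\| = \|X^{-1}\| \leq$ some function of $c$. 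The sharp constant here is known — one can take $\|X\|\|X^{-1}\| \leq \|E\| + \|1-E\| \leq 1 + 2\|E\|$, but to hit the stated bound $1+\kappa_\cA(1)$ one uses the finer estimate $\|X\| = \|X^{-1}\| \leq (1 + \|E\|)^{1/2}\cdots$; more cleanly, since $E \in \cA'$, conjugating by $X$ does not move $\cA$ around in a way that breaks the algebra structure, and the precise numerology can be arranged so that $\|X\| = \|X^{-1}\| \le 1 + \kappa_\cA(1)$.

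Once $X$ is in hand, set $\cH_0 = \cH_1$ and observe $X\cA X^{-1}$ commutes with $P = XEX^{-1}$, the orthogonal projection onto $\cH_0$. Commuting with an orthogonal projection means every element of $X\cA X^{-1}$ is block-diagonal with respect to $\cH = \cH_0 \oplus \cH_0^\perp$, so $X\cA X^{-1} = \cA_0 \oplus \cB_0$ for subalgebras $\cA_0 \subset B(\cH_0)$ and $\cB_0 \subset B(\cH_0^\perp)$. Next I would show $\cB_0 = \{0\}$: since $\cH_1 = \overline{\cA\cH}$ is the essential subspace and $E$ projects onto it along a complementary invariant subspace, the action of $\cA$ on the complement $\ker E$ is trivial; transporting through $X$, the action of $\cA_0 \oplus \cB_0$ on $\cH_0^\perp$ is trivial, forcing $\cB_0 = \{0\}$. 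Thus $X\cA X^{-1} = \cA_0 \oplus \{0\}$ as required.

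Finally I would verify the claimed properties of $\cA_0$. It acts non-degenerately on $\cH_0$: we have $\overline{\cA_0 \cH_0}$ corresponds under $X^{-1}$ to $\overline{\cA \cH} \cap$ (range considerations) $= \cH_1 = \cH_0$ up to the similarity, so non-degeneracy is immediate from the construction of $\cH_1$ as the essential subspace — one has to be slightly careful that conjugation by $X$ preserves the essential subspace, which it does since $X$ is invertible and $X\cH_0 \subseteq \cH$ with $\cA_0\cH_0$ dense in $\cH_0$ by unwinding definitions. For the total reduction property of $\cA_0$: the map $\cA \to \cA_0$, $a \mapsto (X a X^{-1})|_{\cH_0}$, is a bounded algebra isomorphism with closed range, so Lemma~\ref{lemkapparep} (applied with $\rho$ this isomorphism, noting $\rho(\cA) = \cA_0$ is closed) gives that $\cA_0$ has the total reduction property.

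The main obstacle I anticipate is purely quantitative: arranging the similarity $X$ so that $\|X\| = \|X^{-1}\| \le 1 + \kappa_\cA(1)$ rather than some cruder bound. The existence of \emph{some} similarity turning a bounded idempotent into an orthogonal projection is classical and easy; matching the specific constant requires choosing $X$ as the unique positive operator implementing the similarity (e.g. via the polar-type decomposition $X = (E^*E + (1-E)^*(1-E))^{1/2}$ or a variant), and then bounding its norm and the norm of its inverse in terms of $\|E\|$ alone. Everything else — block-diagonalization, vanishing of the complementary summand, and inheritance of the total reduction property via Lemma~\ref{lemkapparep} — should be routine.
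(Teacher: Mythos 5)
Your overall route is the same as the paper's: take $M=\overline{\cA\cH}$, use the total reduction property to produce an idempotent $E\in\cA'$ with range $M$ and $\|E\|\le\kappa_\cA(1)$, conjugate $E$ to the orthogonal projection onto its range by an invertible $X$ with $\|X\|=\|X^{-1}\|\le 1+\|E\|$, and finish with Lemma~\ref{lemkapparep}. The quantitative worry you raise at the end is not the real issue: writing $E=\left[\begin{smallmatrix} I & R\\ 0&0\end{smallmatrix}\right]$ with respect to $\cH=M\oplus M^\perp$ (so $\|R\|\le\|E\|$) and taking $X=\left[\begin{smallmatrix} I & R\\ 0&I\end{smallmatrix}\right]$ gives $XEX^{-1}=P_M$ with $\|X\|=\|X^{-1}\|\le 1+\|E\|$, and this $X$ fixes $M$ pointwise, so your identification $\cH_0=M$ is legitimate for that choice (in general the new space is $XM$, which is what the paper uses). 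Your argument that the complementary block vanishes is also fine: $a(\ker E)\subseteq \ker E\cap \cA\cH\subseteq\ker E\cap\operatorname{ran}E=\{0\}$.

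The one step you cannot wave through is non-degeneracy. What must be shown is $\overline{\cA_0\cH_0}=\cH_0$, which unwinds to $\overline{\cA M}=M$, i.e.\ $\overline{\cA\,\overline{\cA\cH}}=\overline{\cA\cH}$. This is \emph{not} ``immediate from the construction of $\cH_1$ as the essential subspace'': for a general operator algebra it is false (take $\cA=\bbC N$ with $N\neq 0$, $N^2=0$; then $\cA\cH=N\cH\neq\{0\}$ while $\cA\cA\cH=\{0\}$), so something about the total reduction property must be used. The paper closes this gap by invoking Theorem~\ref{thmideal} to obtain a bounded approximate identity $(e_i)_i$ for $\cA$ and writing $a\xi=\lim_i e_i(a\xi)\in\overline{\cA M}$. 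In your setup there is a shortcut that you already have in hand but did not connect to this step: since every $a\in\cA$ annihilates $\ker E$ and $\cH=M+\ker E$, one gets $\cA\cH=\cA M$ directly, whence $\overline{\cA M}=\overline{\cA\cH}=M$. Either route works, but as written your appeal to ``unwinding definitions'' assumes exactly the conclusion to be proved, and your stated worry (whether conjugation by $X$ preserves the essential subspace) is aimed at the wrong difficulty.
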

\begin{proof}
Let $M=\ol{\cA\cH}$ which is a closed $\cA$-invariant subspace. There is an idempotent $E\in \cA'$ such that $E \cH= M$ and $\|E\|\leq \kappa_\cA(1).$
Then, it is well-known that there is an invertible operator $X\in B(\cH)$ with 
\[
\|X\|= \|X^{-1}\|\leq 1+\|E\|\leq 1+\kappa_\cA(1)
\]
and such that $X EX^{-1}$ is a self-adjoint projection. The space $X M$ is then reducing for $X \cA X^{-1}$. Put $\cH_0=XM$ and $\cA_0=(X\cA X^{-1})|_{\cH_0}$. We note that
\[
X\cA X^{-1} \cH\subset  X M=\cH_0
\]
so that $X\cA X^{-1}=\cA_0\oplus \{0\}$ according to the decomposition $\cH=\cH_0\oplus \cH_0^\perp$. Thus, $\cA_0$ has the total reduction property by Lemma \ref{lemkapparep}. Moreover, $\cA$ has a bounded approximate identity $(e_i)_i$ by Theorem \ref{thmideal}. Then, if $\xi\in \cH$ and $a\in \cA$ we have that
\[
a\xi=\lim_i e_i a \xi\in \ol{\cA M}
\]
whence $M=\ol{\cA M}$. Thus
\[
\ol{\cA_0 \cH_0}=X \ol{\cA M}=X M=\cH_0
\]
which shows that $\cA_0$ acts non-degenerately on $\cH_0$. 
\end{proof}

We can now extract more information about ideals.

\begin{thm}\label{thmidealw*}
Let $\cA\subset B(\cH)$ be a weak-${}^*$ closed operator algebra with the total reduction property and let $\cJ\subset \cA$ be a weak-${}^*$ closed two-sided ideal.
Then, there is a central idempotent $e\in \cJ\cap \cA'$ such that $\cJ=e\cA$ and $\|e\|\leq \kappa_{\cA}(1)$.
\end{thm}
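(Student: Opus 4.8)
**Proof proposal for Theorem \ref{thmidealw*}**

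The plan is to produce the central idempotent $e$ by taking a suitable limit of elements coming from a bounded approximate identity of $\cJ$, exploiting weak-$*$ compactness. By Theorem \ref{thmideal}, the ideal $\cJ$ (which has the total reduction property) admits a bounded approximate identity $(e_i)_i$. Since $\cA$ is weak-$*$ closed and $\cJ$ is weak-$*$ closed, a standard argument with bounded nets in a dual space shows that $(e_i)_i$ has a subnet converging in the weak-$*$ topology to some $e\in\cJ$; moreover one can arrange that $e$ is a right identity for $\cJ$ in the weak-$*$ sense, i.e. $ae = a$ for all $a\in\cJ$ (and similarly $ea=a$), by passing to the weak-$*$ limit in the approximate-identity relations $ae_i\to a$, using separate weak-$*$ continuity of multiplication by a fixed element. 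In particular $e$ is an idempotent: $e^2 = \lim_i e\, e_i = e$ since $e\in\cJ$.

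Next I would show $e$ is central, i.e. $e\in\cA'$. For $a\in\cA$, both $ea$ and $ae$ lie in $\cJ$ (as $\cJ$ is a two-sided ideal), so $ea = (ea)e$ and $ae = e(ae)$; combining with $e$ being a two-sided identity for $\cJ$ gives $ea = e(ea) = (ea)e$ and $ae = e(ae)$. To get $ea = ae$ one uses the weak-$*$ limit once more: $ea = \lim_i e_i a$ and $ae = \lim_i a e_i$, and for each fixed $i$ the elements $e_i a$ and $a e_i$ both lie in $\cJ$; writing $e_i a = e_i a e$ (exactly, since $e_i a\in\cJ$ and $e$ is a right identity for $\cJ$) and $a e_i = e\, a e_i$, then taking limits, yields $ea = e a e = ae$. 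This places $e$ in the center of $\cA$, and then $\cJ = e\cA = \cA e$ follows: $e\cA\subset\cJ$ since $\cJ$ is an ideal, while for $a\in\cJ$ we have $a = ea\in e\cA$.

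For the norm bound, the point is that $e\cH$ is a closed $\cA$-invariant subspace (indeed $\cA$-reducing, since $e$ is central), so by the total reduction property applied with the identity representation of $\cA$ — which has norm at most $1$ as $\cA\subset B(\cH)$ — there is an idempotent $E\in\cA'$ with $E\cH = e\cH$ and $\|E\|\le\kappa_\cA(1)$. One then argues $E = e$: since $e$ and $E$ are both idempotents with the same range, $Ee = e$ and $eE = E$, and since $e$ is central $eE = Ee$, giving $e = E$. Hence $\|e\| = \|E\|\le\kappa_\cA(1)$, and $e\in\cJ\cap\cA'$ as required.

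The main obstacle I anticipate is the manipulation of weak-$*$ limits: one must be careful that multiplication is only \emph{separately} weak-$*$ continuous (not jointly), so all the identities above should be set up so that at each stage only one factor varies along the net while the other is fixed, and one must verify that the weak-$*$ limit $e$ genuinely serves as a two-sided identity for all of $\cJ$ rather than just for the net elements. A secondary subtlety is justifying that the range $e\cH$ is norm-closed — this follows because $e$ is a bounded idempotent, so $e\cH = \ker(1-e)$ — and confirming that the identity representation is the correct representation to which to apply the definition of $\kappa_\cA$.
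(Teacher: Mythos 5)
Your proof is correct; the norm estimate is obtained exactly as in the paper, while the existence of the central idempotent is established by a genuinely different and more self-contained argument. The paper produces $e$ with $\cJ=e\cA$ by citing Lemma \ref{lemnondeg} together with two structural results from Gifford's thesis, and then, just as you do, compares $e$ with the idempotent $f\in\cA'$ of norm at most $\kappa_\cA(1)$ furnished by the total reduction property for the invariant subspace $e\cH$, concluding via the fact that commuting idempotents with identical ranges are equal; the commutation $ef=fe$ comes from $e\in\cJ\subseteq\cA$ together with $f\in\cA'$, and you should state this explicitly since it is the one point where both memberships are used. Your alternative construction of $e$ as a weak-${}^*$ cluster point of the bounded approximate identity of $\cJ$ supplied by Theorem \ref{thmideal} is valid: boundedness and weak-${}^*$ closedness of $\cJ$ give a cluster point $e\in\cJ$, and separate weak-${}^*$ continuity of multiplication on $B(\cH)$, combined with the norm convergence of $ae_i$ and $e_ia$, shows that $e$ is a two-sided identity for $\cJ$ and hence an idempotent. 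One simplification: once this is known, centrality requires no further passage to limits, since for $a\in\cA$ both $ea$ and $ae$ lie in $\cJ$, whence $ea=(ea)e=eae=e(ae)=ae$. What your route buys is independence from the two uncited propositions of Gifford; what the paper's route buys is brevity.
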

\begin{proof}
The existence of an idempotent $e\in \cJ\cap \cA'$ such that $\cJ=e\cA$ follows from Lemma \ref{lemnondeg} and \cite[Proposition 3.2.2 and Corollary 3.1.5]{Gif1997}. To get the announced norm estimate, we proceed as follows. Consider the closed subspace $M=e\cH$. Since $e\in \cA'$, we see that $M$ is $\cA$-invariant. Hence, there is another idempotent $f\in \cA'$ with $\|f\|\leq \kappa_{\cA}(1)$ such that $M=f\cH$. Note that $ef=fe$ because $e\in \cJ \subseteq \cA$. Commuting idempotents with identical ranges must be equal, so that indeed $\|e\|\leq \kappa_{\cA}(1)$.
\end{proof}

As a consequence of the previous theorem, we see that if $\cA\subset B(\cH)$ is a weak-${}^*$ closed operator algebra with the total reduction property, then $\cA$ has a unit $u$ with $\|u\|\leq \kappa_{\cA}(1)$. In particular, if $\cJ\subset\cA$ is a weak-${}^*$ closed two-sided ideal, then we have a topological direct sum decomposition
\[
\cA=\cJ+(u-e)\cA
\]
where $\cJ=e\cA$.

We now state the result mentioned in the introduction relating the total reduction property to Kadison's similarity property.
\begin{thm}\label{thmGifKSP}
Let $\fA$ be a $C^*$-algebra.
\begin{enumerate}
\item[\rm{(1)}]  If $\fA$ has Kadison's similarity property, then $\fA$ has the total reduction property.

\item[\rm{(2)}]  If $\fA$ has the total reduction property, then $\fA$ has Kadison's similarity property. More precisely, if $\theta:\fA\to B(\cH)$ is a representation, then there is an invertible operator $X\in B(\cH)$ such that 
\[
a\mapsto X\theta(a)X^{-1}, \quad a\in \fA
\]
is a ${}^*$-homomorphism of $\fA$ and
\[
\|X\| \|X^{-1}\|\leq 128 \kappa_{\fA}(\|\theta\|)^2.
\]
In particular, we see that 
\[
\|\theta\|_{cb}\leq128 \kappa_{\fA}(\|\theta\|)^2.
\]
\end{enumerate}
\end{thm}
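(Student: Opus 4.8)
I would establish the two implications separately. The first is short; the second is the substance of the statement --- it is Gifford's theorem, and I would recall the argument while keeping track of the dependence on $\kappa_\fA$. \textbf{Part (1).} Suppose $\fA$ has Kadison's similarity property, let $\theta\colon\fA\to B(\cH)$ be a representation, and let $M\subseteq\cH$ be a closed $\theta(\fA)$-invariant subspace. Choose an invertible $X\in B(\cH)$ so that $\theta_X=X^{-1}\theta(\cdot)X$ is a $*$-representation. Since $\theta(\fA)M\subseteq M$ we get $\theta_X(\fA)(X^{-1}M)\subseteq X^{-1}M$, and $X^{-1}M$ is closed; being a closed invariant subspace of the self-adjoint algebra $\theta_X(\fA)$, it is reducing, so the orthogonal projection $P$ onto $X^{-1}M$ lies in $\theta_X(\fA)'$. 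Then $E=XPX^{-1}$ is an idempotent in $B(\cH)$ that commutes with $\theta(\fA)=X\theta_X(\fA)X^{-1}$ and satisfies $\operatorname{ran}E=X(X^{-1}M)=M$. Hence every closed $\theta(\fA)$-invariant subspace is the range of an idempotent in $\theta(\fA)'$, which is precisely the total reduction property.

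\textbf{Part (2).} Assume now that $\fA$ has the total reduction property, let $\theta\colon\fA\to B(\cH)$ be a representation, and put $t=\|\theta\|$. First, using Lemma~\ref{lemnondeg}, I would conjugate $\theta$ by an invertible operator of norm and inverse-norm at most $1+\kappa_\fA(1)$, and thereby assume that $\theta$ acts non-degenerately (on the part of $\cH$ where $\theta$ vanishes there is nothing to do, it already carries the zero $*$-representation). I would then combine two ingredients. The first is Haagerup's theorem~\cite{Haa1983s}: a representation of a $C^*$-algebra admitting a cyclic vector is completely bounded, and hence --- by~\cite{Haa1983s} together with Paulsen's similarity theorem~\cite{Pau2002} --- similar to a $*$-representation, via a similarity whose norm times inverse-norm is bounded by a universal function of the norm of the representation. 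The second is the total reduction property, which provides, for each closed invariant subspace, a complementing idempotent of norm at most $\kappa_\fA(t)$ (see \cite[Proposition~2.2.13]{Gif1997} and the discussion preceding Lemma~\ref{lemkapparep}).

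With these in hand, the argument runs as follows. For nonzero $\xi\in\cH$ the closed $\theta(\fA)$-invariant subspace $\overline{\theta(\fA)\xi}$ has $\xi$ as a cyclic vector (here non-degeneracy is used), so the restriction of $\theta$ to it is similar to a $*$-representation with a similarity constant depending only on $t$. Peeling off such cyclic subspaces one at a time by means of the total reduction property, one writes $\cH$ as a topological direct sum of closed $\theta(\fA)$-invariant cyclic subspaces $\cH_\alpha$; organising this (transfinite) process so that the accumulated idempotents stay uniformly bounded by $\kappa_\fA(t)$, the decomposition is implemented by a single similarity of norm controlled by $\kappa_\fA(t)$. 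The external direct sum of the $*$-representations similar to the various $\theta|_{\cH_\alpha}$ is a $*$-representation of $\fA$, and splicing the uniformly bounded similarities on the summands produces an invertible $X$ for which $X\theta(\cdot)X^{-1}$ is a $*$-homomorphism. Combining the three contributions --- the non-degeneracy reduction, the cyclic decomposition, and Haagerup's cyclic bound --- yields $\|X\|\,\|X^{-1}\|\le 128\,\kappa_\fA(t)^2$. Since $*$-homomorphisms are completely contractive, $\theta(a)=X^{-1}\big(X\theta(a)X^{-1}\big)X$ then gives $\|\theta\|_{cb}\le\|X\|\,\|X^{-1}\|\le 128\,\kappa_\fA(t)^2$, which is the final assertion.

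\textbf{Main obstacle.} The delicate point is the uniformity in the cyclic decomposition: the total reduction property only bounds the complementing idempotent of a \emph{single} invariant subspace, while peeling off infinitely --- possibly transfinitely --- many cyclic summands could a priori force the partial projections to grow without bound, so one must arrange the decomposition so that the $\cH_\alpha$ complement one another \emph{uniformly}. This, together with extracting the explicit constant $128$, is where the real work lies; once the cyclic reduction is set up correctly, the remaining steps are essentially formal.
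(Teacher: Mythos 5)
Your Part (1) is complete and correct, and it is exactly the easy direction (Gifford's Lemma 2.4.1 in \cite{Gif1997}): conjugate to a ${}^*$-representation, observe that a closed invariant subspace of a self-adjoint algebra is reducing, and conjugate the orthogonal projection back to get an idempotent in $\theta(\fA)'$ with the prescribed range. Note that the paper does not reprove this theorem at all --- it simply cites Lemmas 2.4.1, 2.4.3 and Proposition 2.4.4 of \cite{Gif1997} --- so your Part (2) is an attempt to reconstruct precisely the argument being cited, and you have correctly identified its ingredients: Haagerup's theorem \cite{Haa1983s} applied to each cyclic piece, and the total reduction property used to decompose $\cH$ into a topological direct sum of cyclic invariant subspaces.

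However, Part (2) as written has a genuine gap, and it is the one you flag yourself. The statement is quantitative --- $\|X\|\,\|X^{-1}\|\leq 128\,\kappa_{\fA}(\|\theta\|)^2$ --- and the entire content of that bound lives in the step you leave undone: organising the (transfinite) peeling of cyclic subspaces so that the accumulated idempotents stay uniformly bounded by $\kappa_\fA(\|\theta\|)$, i.e.\ producing an exhaustive family of cyclic invariant subspaces that complement one another uniformly (this is Gifford's Lemma 2.4.3), and then splicing the Haagerup similarities on the summands into a single invertible operator on all of $\cH$ (his Proposition 2.4.4, which is where the factor $128$ and the square on $\kappa_\fA$ actually arise). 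Saying ``this is where the real work lies'' is accurate, but it means the quantitative conclusion is asserted rather than derived. A smaller but real bookkeeping issue: your preliminary reduction via Lemma \ref{lemnondeg} costs a factor of $(1+\kappa_\fA(1))^2$ that does not appear in the claimed bound, and the complement of $\overline{\theta(\fA)\cH}$ is not simply ``the part where $\theta$ vanishes'' (vectors outside $\overline{\theta(\fA)\cH}$ need not be annihilated by $\theta(\fA)$); so either that reduction must be folded into the decomposition argument itself or the constant must absorb it --- as written, your three contributions do not visibly multiply out to $128\,\kappa_{\fA}(\|\theta\|)^2$.
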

\begin{proof}
This is a combination of Lemmas 2.4.1, 2.4.3 and Proposition 2.4.4 in \cite{Gif1997}.
\end{proof}

We close this section with one of the main results of \cite{Gif1997}, which states that if an operator algebra consisting of compact operators has the total reduction property, then it is similar to a $C^*$-algebra. We require a  refined form of a special case of this theorem, which we prove below. 
\begin{thm}\label{thm2.04}
Let $\cH$ be a finite-dimensional Hilbert space and suppose that $\cA\subset B(\cH)$ is an operator algebra with the total reduction property.  Then, there exists an invertible operator $X\in B(\cH)$ with the property that $X\cA X^{-1}$ is a $C^*$-algebra, and such that 
\[
\|X\| \|X^{-1}\|\leq  (1+\kappa_\cA(1))^2  \ 128 (1 + 2 \kappa_{\cA}(1)) \kappa_{\cA}(1 + 2 \kappa_{\cA}(1))^2.
\]
\end{thm}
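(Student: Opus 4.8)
The plan is to normalise the action of $\cA$, identify it up to isomorphism with a finite-dimensional $C^*$-algebra by means of the Wedderburn structure theory, and then apply the quantitative form of Kadison's similarity property recorded in Theorem \ref{thmGifKSP}(2).

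\emph{Normalisation and semisimplicity.} First I would invoke Lemma \ref{lemnondeg} to conjugate $\cA$ by an invertible $X_0$ with $\|X_0\|\,\|X_0^{-1}\|\le(1+\kappa_\cA(1))^2$ so that $\cA=\cA_0\oplus\{0\}$ relative to an orthogonal decomposition $\cH=\cH_0\oplus\cH_0^\perp$, where $\cA_0\subseteq B(\cH_0)$ acts non-degenerately and has the total reduction property. Since $\cH_0$ is finite-dimensional, such an $\cA_0$ is unital with identity $I_{\cH_0}$: regarded as a closed ideal in itself it has a bounded approximate identity by Theorem \ref{thmideal}, any cluster point of which is a two-sided identity in $\cA_0$ and hence, by non-degeneracy, equal to $I_{\cH_0}$. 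It is now enough to find an invertible $Y\in B(\cH_0)$ with $Y\cA_0Y^{-1}$ a $C^*$-algebra, for then $X=(Y\oplus I_{\cH_0^\perp})X_0$ does the job and $\|X\|\,\|X^{-1}\|\le(1+\kappa_\cA(1))^2\,\|Y\|\,\|Y^{-1}\|$. Next I would verify that $\cA_0$ is semisimple: its Jacobson radical $J$ is a two-sided ideal which is closed (being a linear subspace of the finite-dimensional space $B(\cH_0)$), so Theorem \ref{thmideal} endows it with a bounded approximate identity; since $J$ is nilpotent, this forces $J=\overline{J^2}=J^2=\cdots=\{0\}$.

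\emph{Reduction to a $C^*$-algebra.} By the Wedderburn--Artin theorem there is an algebra isomorphism $\psi\colon\mathfrak B\to\cA_0$ with $\mathfrak B=\bigoplus_{i=1}^r\bbM_{k_i}$, a finite-dimensional $C^*$-algebra; being nuclear, $\mathfrak B$ is amenable and so has the total reduction property. The inclusion turns $\psi$ into a representation $\theta_0\colon\mathfrak B\to B(\cH_0)$ with $\theta_0(\mathfrak B)=\cA_0$ and $\|\theta_0\|=\|\psi\|$. Since $\psi$ is surjective, a representation $\sigma$ of $\mathfrak B$ and the representation $\sigma\circ\psi^{-1}$ of $\cA_0$ have the same image, hence the same commutant and the same invariant subspaces, so the total reduction property of $\cA_0$ gives $\kappa_{\mathfrak B}(t)\le\kappa_{\cA_0}(t\,\|\psi^{-1}\|)$ for all $t\ge0$. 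Granting appropriate bounds on $\|\psi\|$ and $\|\psi^{-1}\|$ (see below), Theorem \ref{thmGifKSP}(2) applied to $\theta_0$ yields an invertible $Y\in B(\cH_0)$ such that $Y\cA_0Y^{-1}=Y\theta_0(\mathfrak B)Y^{-1}$ is a $*$-subalgebra of $B(\cH_0)$, hence a $C^*$-algebra, with $\|Y\|\,\|Y^{-1}\|\le128\,\kappa_{\mathfrak B}(\|\theta_0\|)^2$.

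\emph{The main obstacle: controlling $\psi$.} The crux is to bound $\|\psi\|$ and $\|\psi^{-1}\|$ purely in terms of $\kappa_{\cA_0}(1)$; an abstract isomorphism between finite-dimensional algebras carries no a priori norm control, and this is where the total reduction property has to be used decisively. I would exploit the central idempotents $z_i:=\psi(1_{\bbM_{k_i}})\in\cA_0\cap\cA_0'$. Each $z_i$ has range an $\cA_0$-invariant subspace, so there is an idempotent $f_i\in\cA_0'$ with $f_i\cH_0=z_i\cH_0$ and $\|f_i\|\le\kappa_{\cA_0}(1)$; as $f_i$ and $z_i$ commute and have equal ranges, $f_i=z_i$, whence $\|z_i\|\le\kappa_{\cA_0}(1)$. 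The $z_i$ are mutually orthogonal idempotents with $\sum_i z_i=I_{\cH_0}$, and the reflections $2z_i-I_{\cH_0}$ have norm at most $1+2\kappa_{\cA_0}(1)$; these permit a simultaneous orthogonalisation of the decomposition $\cH_0=\bigoplus_i z_i\cH_0$ at a cost controlled by $\kappa_{\cA_0}(1)$, after which $\cA_0$ becomes an orthogonal direct sum of faithful representations of the simple algebras $\bbM_{k_i}$, each carrying the total reduction property with constant dominated by $\kappa_{\cA_0}$ (Lemma \ref{lemkapparep}). Treating the blocks one at a time, and applying Theorem \ref{thmGifKSP}(2) once more within each block, one obtains the desired bounds; I expect the resulting bookkeeping to yield the factor $128\,(1+2\kappa_\cA(1))\,\kappa_\cA(1+2\kappa_\cA(1))^2$, which together with the normalising similarity $X_0$ gives the stated estimate. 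The delicate step throughout is precisely the passage from the total reduction property to a \emph{uniform} control of the internal similarities of the matrix blocks.
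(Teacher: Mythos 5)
Your outer structure matches the paper's: normalise via Lemma \ref{lemnondeg} at cost $(1+\kappa_\cA(1))^2$, decompose $\cA_0$ along central idempotents whose norms are controlled by $\kappa_{\cA_0}(1)$ (your argument that $f_i=z_i$ is exactly the one used in Theorem \ref{thmidealw*}), and orthogonalise that decomposition at cost $1+2\kappa_\cA(1)$. But there is a genuine gap at the heart of the argument, and it is precisely the step you flag as ``the crux'' and then do not carry out. After orthogonalisation you are left with a simple operator algebra $\cB_i\subset B(P_i\cH_0)$, abstractly isomorphic to $\bbM_{k_i}$, and you propose to conclude by applying Theorem \ref{thmGifKSP}(2) to a representation $\theta_i:\bbM_{k_i}\to B(P_i\cH_0)$ with image $\cB_i$. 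That theorem gives the bound $128\,\kappa_{\bbM_{k_i}}(\|\theta_i\|)^2$, so you need to control both $\|\theta_i\|$ and (via Lemma \ref{lemkapparep}, which gives $\kappa_{\bbM_{k_i}}(t)\le\kappa_{\cB_i}(t\|\theta_i^{-1}\|)$) the norm $\|\theta_i^{-1}\|$. Both quantities are norms of the Wedderburn isomorphism, which, as you yourself observe, carries no a priori control; and minimising $\|\theta_i\|\,\|\theta_i^{-1}\|$ over all choices of $\theta_i$ is essentially the same problem as bounding the similarity of $\cB_i$ to a $C^*$-algebra. The argument is therefore circular, and ``I expect the resulting bookkeeping to yield the factor'' does not close it.

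The paper avoids this circularity by not passing through an abstract isomorphism at all: after producing the minimal central idempotents (via \cite[Lemma 4.3.12]{Gif1997}) and orthogonalising them (via \cite[Lemmas 1.0.3 and 3.2.3]{Gif1997}), it invokes \cite[Lemma 4.3.11]{Gif1997}, which states directly that a finite-dimensional operator algebra with the total reduction property whose double commutant has no proper central idempotent is similar to a self-adjoint algebra via $Z$ with $\|Z\|\,\|Z^{-1}\|\le 128\,\kappa(1)^2$ --- a bound intrinsic to the concrete algebra, with no reference to a norm-uncontrolled abstract model. That lemma is the genuine technical input here; your proposal would be complete if you either cited it or supplied a proof of a uniform similarity bound for a simple, concretely represented matrix block in terms of $\kappa_{\cB_i}(1)$ alone. (A secondary, smaller point: even granting a bound on $\|\theta_i\|$, you would still need an estimate for the intrinsic function $\kappa_{\bbM_{k_i}}$ independent of $k_i$, which your sketch also leaves unaddressed.)
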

\begin{proof}
First, we note that by virtue of Lemma \ref{lemnondeg}, we may assume without loss of generality that $\cA$ acts non-degenerately upon conjugating with an invertible operator $V\in B(\cH)$ satisfying
\[
\|V\|=\|V^{-1}\|\leq 1+\kappa_\cA(1).
\]
It is this potential initial conjugation that accounts for the first term of $(1+\kappa_\cA(1))^2$ on the right-hand side of the inequality appearing in the statement.

The proof then consists of a combination of results scattered throughout \cite{Gif1997}. We see that $\cA$ consists of compact operators on $\cH$, and so by \cite[Lemma 4.3.12]{Gif1997} there exist  finitely many minimal idempotents $E_1,\ldots,E_n\in \cA''\cap \cA'$ such that
\[
\cA=E_1\cA E_1+\ldots+E_n \cA E_n
\]
and $\sum_{k=1}^n E_k=I.$ Necessarily we have that these idempotents are pairwise orthogonal  and that for each $k$ the algebra $(E_k \cA E_k)''=E_k \cA''E_k$ contains no proper central idempotent.

By  \cite[Lemmas 1.0.3 and 3.2.3]{Gif1997}, we know that there exists an invertible operator $Y\in B(\cH)$ such that $P_k=YE_k Y^{-1}$ is a self-adjoint projection for every $1\leq k \leq n$, and moreover
\[
\|Y\| \|Y^{-1}\|\leq 1+2\kappa_{\cA}(1).
\]
Note that  
\[
P_k Y\cA Y^{-1}P_k= Y E_k \cA E_k Y^{-1}
\]
for each $1\leq k \leq n$, so we find
\begin{align*}
Y \cA Y^{-1}&=Y E_1\cA E_1Y^{-1}+\ldots+Y E_n \cA E_n Y^{-1}\\
&=\bigoplus_{k=1}^n P_k Y \cA Y^{-1}P_k.
\end{align*}
Moreover, we see that
\[
(P_k Y \cA Y^{-1} P_k)''=Y (E_k \cA E_k)'' Y^{-1}
\]
contains no proper central idempotent for each $1\leq k \leq n$. Using \cite[Lemma 4.3.11]{Gif1997}, for each $k$ we find an invertible operator $Z_k$ such that the algebra $Z_k P_k Y \cA Y^{-1} P_k Z_k^{-1}$ is self-adjoint and
\[
\|Z_k\|= \|Z_k^{-1}\|\leq \sqrt{128} \kappa_{P_k Y \cA Y^{-1} P_k}(1).
\]
Now, by Lemma \ref{lemkapparep} we see that
\[
 \kappa_{P_k Y \cA Y^{-1} P_k}(1)\leq \kappa_\cA(\|Y\| \|Y^{-1}\|)\leq \kappa_{\cA}(1+2\kappa_{\cA}(1))
\]
so that
\[
\|Z_k\|= \|Z_k^{-1}\|\leq \sqrt{128}\kappa_{\cA}(1+2\kappa_{\cA}(1)).
\]
Since the orthogonal projections $P_1,\ldots, P_n$ are pairwise orthogonal and satisfy $\sum_{k=1}^n P_k=I$, if we set $Z=\bigoplus_{k=1}^n P_k Z_kP_k$ then $Z$ is invertible with $Z^{-1}=\bigoplus_{k=1}^n P_k Z^{-1}_kP_k.$ Moreover, we see that
\[
\|Z\| \|Z^{-1}\|\leq 128 \kappa_{\cA}(1+2\kappa_A(1))^2.
\]
Finally, by setting $X=ZY$ we obtain
\[
X\cA X^{-1}=\bigoplus_{k=1}^n P_k Z_k P_k Y\cA Y^{-1} P_k Z^{-1}_k P_k
\]
which is a $C^*$-algebra, and
\[
\|X\| \|X^{-1}\| \leq (1+2\kappa_{\cA}(1))128 \kappa_{\cA}(1+2\kappa_A(1))^2.
\]
\end{proof}

%
%

\section{A reduction to residually finite-dimensional operator algebras}\label{secRed}

This section is meant as motivation for the rest of the paper. The goal here is to show that for amenable operator algebras, the generalized similarity problem can be transplanted to the concrete setting of products of matrix algebras without loss of generality. We will accomplish this by considering cones of operator algebras. 
Recall that if $\cA$ is a Banach algebra, then the \emph{cone of $\cA$} is the Banach algebra
\[
C(\cA) = \{ f: [0,1] \to \cA: f \mbox{ is continuous and } f(0)= 0\}
\]
where if $f\in C(\cA)$ then
\[
\|f\|=\sup_{t\in [0,1]}\|f(t)\|.
\]
Alternatively, we see that $C(\cA)=C_0((0,1])\otimes \cA$, equipped with the injective tensor norm. 
Interestingly, taking the cone of an algebra preserves amenability \cite[Exercise 2.3.6]{rundebook2002}.   We need the following routine fact.

\begin{prop} \label{prop4.09}
Let $\cA$ and $\cB$ be operator algebras and $\theta: \cA \to \cB$ be a representation. Then $\theta$ induces a representation $\Phi_{\theta}: C(\cA) \to C(\cB)$ defined by the formula
\[
(\Phi_{\theta} f)(t) = \theta (f(t)), \quad t \in [0,1].
\]
Furthermore, the map $\theta$ is completely bounded if and only if $\Phi_{\theta}$ is completely bounded, and we have $\|\theta\|_{cb}=\|\Phi_\theta\|_{cb}$.
\end{prop}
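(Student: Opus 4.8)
The plan is to establish the representation-theoretic and norm facts about $\Phi_\theta$ separately, exploiting the identification $C(\cA) = C_0((0,1]) \otimes \cA$ with the injective tensor norm. First I would check that $\Phi_\theta$ is well-defined and multiplicative: if $f \in C(\cA)$ then $t \mapsto \theta(f(t))$ is continuous (being the composition of the continuous $f$ with the bounded linear $\theta$) and vanishes at $0$, so $\Phi_\theta f \in C(\cB)$; linearity is immediate, and for $f,g \in C(\cA)$ one has $(\Phi_\theta(fg))(t) = \theta(f(t)g(t)) = \theta(f(t))\theta(g(t)) = ((\Phi_\theta f)(\Phi_\theta g))(t)$ since $\theta$ is a homomorphism. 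Boundedness of $\Phi_\theta$ with $\|\Phi_\theta\| \le \|\theta\|$ follows from $\|\Phi_\theta f\| = \sup_t \|\theta(f(t))\| \le \|\theta\| \sup_t \|f(t)\| = \|\theta\| \, \|f\|$.

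Next I would address the completely bounded norm. The clean way is to observe that for each $n \ge 1$ there is an isometric identification $\bbM_n(C(\cA)) \cong C(\bbM_n(\cA))$: an element of $\bbM_n(C(\cA))$ is a matrix $[f_{i,j}]$ of continuous $\cA$-valued functions vanishing at $0$, which corresponds to the continuous $\bbM_n(\cA)$-valued function $t \mapsto [f_{i,j}(t)]$, and the norm on $\bbM_n(B(\cH^{(n)}))$-valued continuous functions is computed pointwise (this is the standard fact $\bbM_n(C_0(X,\mathfrak{A})) = C_0(X, \bbM_n(\mathfrak{A}))$ isometrically, which holds because $\bbM_n(C_0(X) \otimes_{\min} \mathfrak{A}) = C_0(X) \otimes_{\min} \bbM_n(\mathfrak{A})$ via associativity of the minimal tensor product). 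Under this identification, $\Phi_\theta^{(n)}$ is carried to $\Phi_{\theta^{(n)}}$, the cone map associated to $\theta^{(n)}: \bbM_n(\cA) \to \bbM_n(\cB)$. By the boundedness estimate of the previous paragraph applied to $\theta^{(n)}$ in place of $\theta$, we get $\|\Phi_\theta^{(n)}\| = \|\Phi_{\theta^{(n)}}\| \le \|\theta^{(n)}\|$; taking the supremum over $n$ yields $\|\Phi_\theta\|_{cb} \le \|\theta\|_{cb}$, and in particular $\Phi_\theta$ is completely bounded whenever $\theta$ is.

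For the reverse inequality $\|\theta\|_{cb} \le \|\Phi_\theta\|_{cb}$, the idea is to recover $\theta^{(n)}$ from $\Phi_\theta^{(n)}$ by evaluating at a point and rescaling by constant functions. Fix $a = [a_{i,j}] \in \bbM_n(\cA)$ with $\|a\| \le 1$; choose the constant function $g_a \in \bbM_n(C(\cA))$ given by $g_a(t) = t \cdot a$, which has norm $\sup_{t \in [0,1]} \|t a\| = \|a\| \le 1$. Then $(\Phi_\theta^{(n)} g_a)(t) = t \cdot \theta^{(n)}(a)$, so $\|\Phi_\theta^{(n)} g_a\| = \sup_t \|t \,\theta^{(n)}(a)\| = \|\theta^{(n)}(a)\|$, whence $\|\theta^{(n)}(a)\| \le \|\Phi_\theta^{(n)}\| \, \|g_a\| \le \|\Phi_\theta\|_{cb}$. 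Taking the supremum over such $a$ and over $n$ gives $\|\theta\|_{cb} \le \|\Phi_\theta\|_{cb}$, and combined with the previous paragraph, $\|\theta\|_{cb} = \|\Phi_\theta\|_{cb}$; this also shows $\theta$ is completely bounded when $\Phi_\theta$ is.

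The main obstacle I anticipate is not any deep argument but rather being careful about the isometric identification $\bbM_n(C(\cA)) \cong C(\bbM_n(\cA))$ and the fact that $C(\cA)$ inherits a genuine operator-algebra structure compatible with this — i.e. making sure the matrix norms on the cone are exactly the pointwise sup of the matrix norms on $\cA$, which is where the injective/minimal tensor norm description of $C(\cA)$ does the work. Once that identification is in hand, everything else is the two elementary estimates above, so the proof is genuinely routine as the paper advertises; I would simply state the identification as a known fact (it is $\bbM_n(C_0(X) \otimes_{\min} \mathfrak{A}) = C_0(X) \otimes_{\min} \bbM_n(\mathfrak{A})$) and proceed.
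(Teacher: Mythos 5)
Your argument is correct and follows essentially the same route as the paper: the estimate $\|\Phi_\theta^{(n)}\|\le\|\theta^{(n)}\|$ via the pointwise computation of the matrix norms on the cone, and the reverse inequality via the test functions $f_a(t)=ta$ (which, incidentally, you call ``constant'' but are of course linear in $t$ --- a harmless slip). The only difference is cosmetic: you make explicit the identification $\bbM_n(C(\cA))\cong C(\bbM_n(\cA))$ that the paper uses implicitly.
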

\begin{proof}
Let $n\in \bbN$. 
We see that if $f\in \bbM_n(C(\cA))$, then
\[
\|(\Phi^{(n)}_{\theta} f)(t) \|_{\bbM_n(C(\cB))}\leq \|\theta^{(n)}\| \|f(t)\|_{\bbM_n(\cA)}
\]
for every $t\in [0,1]$, so that $\|\Phi^{(n)}_\theta\|\leq \|\theta^{(n)}\|$. 
For the reverse inequality, let $a\in \bbM_n(\cA)$. Define $f_a\in \bbM_n(C(\cA))$ as 
\[
f_a(t)=ta, \quad 0\leq t\leq 1.
\]
Then, we see that $\|f_a\|_{\bbM_n(C(\cA))}=\|a\|_{\bbM_n(\cA)}$ and
\[
\Phi^{(n)}_\theta(f_a)=f_{\theta^{(n)}(a)}.
\]
Hence
\begin{align*}
\|\theta^{(n)}(a)\|_{\bbM_n(\cB)}&=\|f_{\theta^{(n)}(a)}\|_{\bbM_n(C(\cB))}=\|\Phi^{(n)}_\theta(f_a)\|_{\bbM_n(C(\cB))}\\
&\leq \|\Phi^{(n)}_{\theta}\| \|f_a\|_{\bbM_n(C(\cA)}=\|\Phi^{(n)}_{\theta}\|\|a\|_{\bbM_n(\cA)}
\end{align*}
which shows that $ \|\theta^{(n)}\|\leq \|\Phi^{(n)}_\theta\|$. 
\end{proof}

We thus see that to verify whether an operator algebra $\cA$ has the SP property, it is sufficient to check that the cone $C(\cA)$ has it.

Before we proceed further, we introduce some notation which will be used throughout the remainder of the paper. Let $\Lambda \ne \varnothing$ be a set and let $\mathbf{k}: \Lambda \to \bbN$ be a function.  We associate with $\mathbf{k}$ the following $C^*$-algebra:
\[
\cM_{\mathbf{k}} = \prod_{\lambda} \bbM_{k(\lambda)} = \{ (a_\lambda)_\lambda : a_\lambda \in \bbM_{\bf{k}(\lambda)} \mbox{ for all } \lambda \in \Lambda \mbox{ and } \sup_\lambda \norm a_\lambda \norm < \infty \}.\]
Let us also define for each $\lambda\in \Lambda$ the component map
\[
q_\lambda^{\mathbf{k}} : \cM_{\mathbf{k}} \to \bbM_{\bf{k}(\lambda)}
\]
via 
\[
q_\lambda^{\mathbf{k}}(( a_\alpha)_{\alpha}) = a_\lambda.
\]

When $\Lambda$ is a directed set, we shall  use the notation $\cL_{\mathbf{k}}$ instead of $\cM_\mathbf{k}$ in order to emphasize this distinction.
The direction on $\Lambda$ allows us to define the closed, two-sided ideal
\[
\cJ_{\mathbf{k}}=\{(a_\lambda)_\lambda\in \cL_{\mathbf{k}} : \lim_\lambda \|a_\lambda\|=0\}.
\]
It is easily verified that $\cJ_{\bf{k}}$ is nuclear, and hence amenable \cite{Haa1983}. We may now construct the quotient $C^*$-algebra
\[
\cQ_{\mathbf{k}}=\cL_{\mathbf{k}}/\cJ_{\mathbf{k}}.
\]
We let 
\[
\pi_{\mathbf{k}}: \cL_{\mathbf{k}}\to \cQ_{\mathbf{k}}
\]
denote the quotient map. 
The main observation of this section is the following, which is a combination of classical facts. It is a direct adaptation of the discussion found after \cite[Theorem 1]{CFO2014}. We present the proof here for the convenience of the reader.

\begin{thm}\label{thmmatrix}
The following statements are equivalent.
\begin{enumerate}

\item[\rm{(i)}] Every amenable operator algebra has the SP property.

\item[\rm{(ii)}] Let  ${\mathbf{k}}:\Lambda\to \bbN$ and $\mathbf{k}': \Lambda'\to \bbN$ be nets, let $\cD \subset\cL_{\mathbf{k}}$ be an amenable operator algebra, and  let $\theta:\cD\to \cQ_{\mathbf{k'}}$ be a representation.  Then $\theta$ is completely bounded.

\end{enumerate}
\end{thm}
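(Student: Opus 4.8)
The plan is to prove the implication (i) $\Rightarrow$ (ii) and the converse (ii) $\Rightarrow$ (i) separately, with the bulk of the work lying in (ii) $\Rightarrow$ (i). The implication (i) $\Rightarrow$ (ii) is essentially immediate: if $\cD$ is amenable, then so is its cone $C(\cD)$, and any representation $\theta:\cD\to\cQ_{\mathbf{k}'}$ is in particular a representation into $B(\cH)$ for a suitable Hilbert space on which $\cQ_{\mathbf{k}'}$ acts, so the blanket assumption that every amenable operator algebra has the SP property applies directly to $\cD$ and forces $\theta$ to be completely bounded.

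For the harder direction (ii) $\Rightarrow$ (i), I would start with an arbitrary amenable operator algebra $\cA$ and a representation $\theta:\cA\to B(\cH)$, and aim to show $\theta$ is completely bounded. By Proposition~\ref{prop4.09} it is equivalent to show that $\Phi_\theta: C(\cA)\to C(B(\cH))$ is completely bounded, and since taking cones preserves amenability, $C(\cA)$ is amenable. The point of passing to the cone is that $C(\cA)=C_0((0,1])\otimes\cA$ has no nonzero idempotents — in fact no nonzero bounded idempotents of any kind in $C(B(\cH))$ supported near $0$ — which is the standard device (as in the discussion after \cite[Theorem 1]{CFO2014}) allowing one to lift representations. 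Concretely, the image $\Phi_\theta(C(\cA))$ sits inside $C(B(\cH))$, which is a C${}^*$-algebra with no minimal projections issues near $0$; more relevantly, $C(\cA)$ admits a bounded approximate identity and its universal enveloping data let one realize any representation through a residually finite-dimensional model. The key classical fact to invoke is that $C(\cA)$, being a (nonselfadjoint) amenable operator algebra that is ``cone-like'', has the property that all of its representations factor, up to the relevant norm control, through a quotient of the form $\cL_{\mathbf{k}}/\cJ_{\mathbf{k}}=\cQ_{\mathbf{k}'}$: one uses that the enveloping C${}^*$-algebra of the cone is an inductive limit built from matrix algebras, together with Haagerup's result that finitely-cyclic representations of C${}^*$-algebras are completely bounded, to reduce to the residually finite-dimensional picture.

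The concrete mechanism I would use: let $\cD\subset\cL_{\mathbf{k}}$ be a copy of $C(\cA)$ realized inside a product of matrix algebras. Such a realization exists because $C(\cA)$ is separable-by-separable-approximable in the right sense — more carefully, one embeds $C(\cA)$ completely isometrically into $B(\cH_0)$ for some $\cH_0$, approximates by finite-dimensional compressions indexed by a directed set $\Lambda$, and checks that the resulting map $C(\cA)\to\cL_{\mathbf{k}}$ is a completely isometric homomorphism with the property that composing with $\pi_{\mathbf{k}}$ recovers $\Phi_\theta$ up to the ideal $\cJ_{\mathbf{k}}$. Then $\theta$ (equivalently $\Phi_\theta$) factors as a representation of $\cD\subset\cL_{\mathbf{k}}$ followed by the quotient $\pi_{\mathbf{k}'}:\cL_{\mathbf{k}'}\to\cQ_{\mathbf{k}'}$, and hypothesis (ii) applies to give complete boundedness of this factored map, hence of $\theta$ itself.

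The main obstacle I anticipate is making precise the claim that an arbitrary representation of the cone $C(\cA)$ genuinely factors (with norm control, and after possibly enlarging $\cH$) through a representation of the form $\cD\to\cQ_{\mathbf{k}'}$ with $\cD$ an amenable subalgebra of some $\cL_{\mathbf{k}}$. This is the heart of ``residual finite-dimensionality'' entering the picture: the natural candidate is to use that the C${}^*$-envelope (or the universal C${}^*$-algebra) of a cone is residually finite-dimensional in a suitable sense, or to invoke the structure of $C^*(\cA)$ as approximated by matrix algebras via a quasidiagonality/Voiculescu-type argument. I would expect the careful bookkeeping — ensuring the embedding $C(\cA)\hookrightarrow\cL_{\mathbf{k}}$ is a homomorphism (not merely a linear complete isometry) and that amenability is preserved by passing to this image — to require appealing to the fact that a completely isometric isomorphic image of an amenable operator algebra is amenable, plus the observation that the matrix-algebra approximations can be chosen to be asymptotically multiplicative so that the limit quotient map $\pi_{\mathbf{k}}$ restricted to the image is an isometric homomorphism. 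Once this factorization is in place, the rest is formal.
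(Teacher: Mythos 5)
Your implication (i) $\Rightarrow$ (ii) is fine and is exactly how the paper treats it (the paper simply declares that only (ii) $\Rightarrow$ (i) needs proof). Your opening moves for (ii) $\Rightarrow$ (i) -- pass to the cone $C(\cA)$, note it is amenable, reduce to $\Phi_\theta$ via Proposition \ref{prop4.09}, and bring in quasidiagonality of cones -- also match the paper. But there is a genuine gap at the heart of your argument: you propose to take $\cD\subset\cL_{\mathbf{k}}$ to be ``a copy of $C(\cA)$ realized inside a product of matrix algebras'' via a completely isometric \emph{homomorphism} $C(\cA)\to\cL_{\mathbf{k}}$. No such embedding exists in general. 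Quasidiagonality (Voiculescu: the cone $C(C^*(\cA))$ is homotopic to zero, hence quasidiagonal) only yields an embedding of $C(C^*(\cA))$, and hence of $C(\cA)$, into the \emph{quotient} $\cQ_{\mathbf{k}}=\cL_{\mathbf{k}}/\cJ_{\mathbf{k}}$; the asymptotically multiplicative finite-dimensional compressions you invoke are multiplicative only modulo $\cJ_{\mathbf{k}}$, so their image in $\cL_{\mathbf{k}}$ is not a subalgebra, and hypothesis (ii) -- which requires an amenable \emph{subalgebra} of $\cL_{\mathbf{k}}$ -- cannot be applied to it. The obstruction is not just bookkeeping: if $C^*(\cA)$ is simple and infinite-dimensional, then $C_0((0,1])\otimes C^*(\cA)$ admits no nonzero finite-dimensional representations at all, so no homomorphic completely isometric copy of $C(\cA)$ can sit inside any product of matrix algebras.

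The missing idea, which is the actual content of the paper's proof, is to take $\cD=\pi_{\mathbf{k}}^{-1}(C(\cA))$, the full preimage of $C(\cA)\subset\cQ_{\mathbf{k}}$ under the quotient map. This \emph{is} a closed subalgebra of $\cL_{\mathbf{k}}$, and it is amenable because $\cJ_{\mathbf{k}}$ is nuclear (hence amenable) and $\cD/\cJ_{\mathbf{k}}\cong C(\cA)$ is amenable, so the extension is amenable by \cite[Theorem 2.3.10]{rundebook2002}. One then applies (ii) to the representation $\Phi_\theta\circ\pi_{\mathbf{k}}:\cD\to C(\cB)\subset\cQ_{\mathbf{k}'}$, where $\cB=\overline{\theta(\cA)}$ and the inclusion $C(\cB)\subset\cQ_{\mathbf{k}'}$ comes from the same quasidiagonality argument applied to $C^*(\cB)$; since $\pi_{\mathbf{k}}|_{\cD}$ is a complete quotient map onto $C(\cA)$, complete boundedness of $\Phi_\theta\circ\pi_{\mathbf{k}}$ is equivalent to that of $\Phi_\theta$, hence of $\theta$. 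Two smaller points: your appeal to Haagerup's theorem on finitely cyclic representations plays no role here and should be dropped, and the remark that the cone has no idempotents is likewise not what makes the argument work -- what matters is homotopy to zero, hence quasidiagonality.
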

\begin{proof}
We need only prove that (ii) implies (i).  Assume therefore that (ii) holds and that  $\cA\subset B(\cH)$ is an amenable operator algebra.  Let $\theta:\cA\to B(\cH_\theta)$ be a representation. We proceed to show that $\theta$ is completely bounded. 

For this purpose, let  $\fA=C^*(\cA)\subset B(\cH)$. Then, $C(\cA)\subset C(\fA)$. It is easy to see that $C(\fA)$ is homotopic to zero, so that $C(\fA)$ is quasidiagonal by \cite[Theorem 5]{Voi1991} (alternatively, see \cite[Corollary 7.3.7]{BO2008} for the precise statement we need). In particular, by a straightforward adaptation of \cite[Exercise 7.1.3]{BO2008} we may view $C(\fA)$ as a $C^*$-subalgebra of $\cQ_{\mathbf{k}}$ for some net ${\mathbf{k}}: \Lambda\to \bbN$. We conclude that $C(\cA)\subset \cQ_{\mathbf{k}}.$
An identical argument shows that $C(\cB)\subset \cQ_{\mathbf{k}'}$ for some net $\mathbf{k}':\Lambda'\to \bbN$, where $\cB=\ol{\theta(\cA)}$.

By Proposition \ref{prop4.09}, there is a representation 
\[
\Phi_{\theta}: C(\cA) \to C(\cB)
\]
which is completely bounded if and only if $\theta$ is. In turn, it is easily verified that $\Phi_\theta$ is completely bounded if and only if
\[
\Phi_\theta\circ \pi_{\mathbf{k}}: \pi_{\mathbf{k}}^{-1}(C(\cA))\to C(\cB)\subset \cQ_{\mathbf{k}'}
\]
is completely bounded. We know that the cone $C(\cA)$ is amenable.   If we let $\cD = \pi_{\mathbf{k}}^{-1}(C(\cA))$, then we may conclude from \cite[Theorem 2.3.10]{rundebook2002} that $\cD$ is an amenable subalgebra of $\cL_{\mathbf{k}}$. Hence (ii) implies that $\Phi_\theta \circ \pi_{\mathbf{k}}$, and thus $\theta$, is  completely bounded. 
\end{proof}

We remark here that it is plausible that a version of this theorem holds for algebras which merely have the total reduction property. However, a direct adaptation of the proof would require some technology which is unavailable at present, and as such we postpone this interesting issue to future work.

We also emphasize that Theorem \ref{thmmatrix} shows that from the point of view of attempting to solve the generalised similarity problem for amenable operator algebras, it is very meaningful to study amenable subalgebras of products of matrix algebras. We undertake this task for the larger class of operator algebras with the total reduction property, and accordingly we introduce the following convenient terminology.

A subalgebra $\cA \subset B(\cH)$ is said to be \emph{residually finite-dimensional} if there exists a family of finite-dimensional Hilbert spaces $\cH_\lambda$ and a family of completely contractive representations
\[
\varrho_\lambda: \cA \to B(\hilb_\lambda) \]
 such that the map
 \[
a\mapsto \bigoplus_\lambda \rho_\lambda(a), \quad a\in \cA
 \]
 is completely isometric. We mention that this definition is consistent with common usage of the term within the realm of $C^*$-algebras. 
 
It is clear that for any function $\mathbf{k}:\Lambda\to\bbN$, the algebra $\cM_{\mathbf{k}}$ considered above is residually finite-dimensional. Furthermore, subalgebras of residually finite-dimensional operator algebras are residually finite-dimensional as well. We now exhibit a less trivial example, which we will revisit later in the paper.

\begin{eg} \label{egtriangular}
Let $\hilb$ be a Hilbert space and let $(P_\lambda)_{\lambda}$ be a net of finite rank projections increasing strongly to $I$.   Let $\cT\subset B(\cH)$ denote the collection of operators which are triangular with respect to these projections, that is $T \in \cT$ if and only if $P_\lambda T P_\lambda=TP_\lambda$ for every $\lambda$. A standard verification shows that $\cT$ is a weak-${}^*$ closed algebra. For each $\lambda$, the map
 \[
 \varrho_\lambda: \cT \to B(P_\lambda \cH)
 \]
 defined by 
\[
\varrho_\lambda(T) = P_\lambda T P_\lambda, \quad T\in \cT
\]
is a completely contractive homomorphism. Moreover, since $(P_\lambda)_\lambda$ increases to $I$, it is easy to see that $\bigoplus_{\lambda} \varrho_\lambda$ is completely isometric, so that $\cT$ is residually finite-dimensional. 

For future use, we also point that each $\rho_\lambda$ is clearly weak-${}^*$ continuous. By a standard application of the Krein-Smulian theorem \cite[Theorem A.2.5]{BLM2004}, we see that $\bigoplus_{\lambda} \varrho_\lambda$ is a completely isometric weak-${}^*$ homeomorphic algebra homomorphism.
\qed
\end{eg}

 Interestingly, residual finite dimensionality of an operator algebra $\cA\subset B(\cH)$ is not equivalent to that of $C^*(\cA)$, as the following examples show.

\begin{eg} \label{egC^*}
Let $\hilb$ be an infinite-dimensional, separable Hilbert space with orthonormal basis $\{ e_m\}_{m=1}^\infty$.  For each $n\in \bbN$, denote by $P_n$ the orthogonal projection of $\hilb$ onto $\mathrm{span}\{ e_1, e_2, \ldots, e_n\}$.  Let $\cT\subset B(\cH)$ be the algebra of triangular operators with respect to $(P_n)_n$. By Example \ref{egtriangular}, we see that $\cT$ is residually finite-dimensional. On the other hand, it is easy to verify that the ideal of compact operators $\cK(\cH)$ belongs to $C^*(\cT)$. To show that $C^*(\cT)$ is not residually finite-dimensional, it suffices to show that $\cK(\cH)$ is not. But any completely contractive homomorphism of $\cK(\cH)$ is a $*$-homomorphism, and there are no non-zero $*$-homomorphisms from $\cK(\cH)$ into a finite-dimensional $C^*$-algebra, in view of $\cH$ being infinite-dimensional.
\qed
\end{eg}

We close this section by exhibiting a class of residually finite-dimensional $C^*$-algebras which is of particular interest to us, in light of Theorem \ref{thmmatrix}. We suspect that the following statement is well-known to experts, but we provide a proof for the reader's convenience.

\begin{prop}\label{prop4.07}
Let $\mathbf{k}:\Lambda\to \bbN$ be a bounded net. Then, the $C^*$-algebra $\cQ_{\mathbf{k}}$ is residually finite-dimensional. More precisely, there is another set $\Lambda'\neq \varnothing$, a constant function $\mathbf{r}:\Lambda'\to \bbN$ and a completely isometric ${}^*$-homomorphism $\Gamma:\cQ_{\mathbf{k}}\to \cM_{\mathbf{r}}$.
\end{prop}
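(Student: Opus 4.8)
The plan is to prove this by a compactness/ultrafilter argument, realizing $\cQ_{\mathbf{k}}$ as a quotient of $\cL_{\mathbf{k}}=\prod_\lambda \bbM_{k(\lambda)}$ and using the boundedness of $\mathbf{k}$ to control the sizes of the matrix blocks. Since $\mathbf{k}$ is bounded, write $N=\sup_\lambda k(\lambda)$, and by passing to a cofinal subnet (or simply grouping) we may assume $k(\lambda)\le N$ for all $\lambda$; in fact, by embedding each $\bbM_{k(\lambda)}$ unitally into $\bbM_N$ (padding with zeros in a way compatible with the directed structure) we may as well assume $k(\lambda)=N$ for all $\lambda\in\Lambda$, so that $\cL_{\mathbf{k}}=\ell^\infty(\Lambda,\bbM_N)$ and $\cJ_{\mathbf{k}}=c_0(\Lambda,\bbM_N)$, where $c_0$ is taken with respect to the order filter on $\Lambda$. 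One caveat: the padding must be done carefully so that elements with $\lim_\lambda\|a_\lambda\|=0$ still map into the ``vanishing'' ideal; this is routine since the order filter is preserved.

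Next I would index the relevant representations by ultrafilters. For each ultrafilter $\cU$ on $\Lambda$ that refines the order filter $\cF$ (the filter of ``tails'' $\{\mu:\mu\ge\lambda_0\}$), consider the ${}^*$-homomorphism $\sigma_\cU:\cL_{\mathbf{k}}\to\bbM_N$ given by the ultralimit of the coordinates, $\sigma_\cU((a_\lambda)_\lambda)=\lim_{\lambda\to\cU}a_\lambda$ (this limit exists in the finite-dimensional space $\bbM_N$). Since $\cU$ refines $\cF$, every element of $\cJ_{\mathbf{k}}$ is killed by $\sigma_\cU$, so $\sigma_\cU$ descends to a ${}^*$-homomorphism $\tilde\sigma_\cU:\cQ_{\mathbf{k}}\to\bbM_N$. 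I would then set $\Lambda'$ to be the set of all ultrafilters on $\Lambda$ refining $\cF$, take $\mathbf{r}\equiv N$ (a constant function), and define
\[
\Gamma:\cQ_{\mathbf{k}}\to\cM_{\mathbf{r}}=\prod_{\cU\in\Lambda'}\bbM_N,\qquad \Gamma(x)=(\tilde\sigma_\cU(x))_{\cU\in\Lambda'}.
\]
Each $\tilde\sigma_\cU$ is a ${}^*$-homomorphism hence contractive, so $\Gamma$ is a well-defined ${}^*$-homomorphism.

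The remaining point is that $\Gamma$ is isometric, and since it is a ${}^*$-homomorphism between $C^*$-algebras it will then automatically be completely isometric. Fix $x\in\cQ_{\mathbf{k}}$ with $\|x\|=1$ and a lift $(a_\lambda)_\lambda\in\cL_{\mathbf{k}}$; then $\|x\|=\limsup_{\lambda\to\cF}\|a_\lambda\|=\inf_{\lambda_0}\sup_{\lambda\ge\lambda_0}\|a_\lambda\|$. I would show there is an ultrafilter $\cU\supseteq\cF$ along which $\|a_\lambda\|\to\|x\|$: the sets $S_\varepsilon=\{\lambda:\|a_\lambda\|>\|x\|-\varepsilon\}$ are cofinal (by definition of the $\limsup$ along $\cF$), hence together with $\cF$ they generate a proper filter, which extends to an ultrafilter $\cU\in\Lambda'$; along $\cU$ one has $\|a_\lambda\|\to\|x\|$, and since the norm is continuous and $\bbM_N$ is finite-dimensional, $\|\tilde\sigma_\cU(x)\|=\|\lim_{\lambda\to\cU}a_\lambda\|=\lim_{\lambda\to\cU}\|a_\lambda\|=\|x\|$. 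Therefore $\|\Gamma(x)\|\ge\|x\|$, and combined with contractivity we get $\|\Gamma(x)\|=\|x\|$. The main (and only mildly delicate) obstacle is the bookkeeping around filters: making sure the order filter $\cF$ is proper (which requires $\Lambda$ directed, so tails are nonempty and closed under finite intersection), verifying that refining ultrafilters exist and annihilate $\cJ_{\mathbf{k}}$, and handling the reduction to constant block size $N$ without disturbing the ideal $\cJ_{\mathbf{k}}$. Everything else is a routine application of the elementary theory of ultralimits in finite-dimensional $C^*$-algebras.
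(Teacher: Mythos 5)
Your argument is correct and is essentially the paper's own proof: both pad each $\bbM_{\mathbf{k}(\lambda)}$ into a single $\bbM_r$ via the (non-unital) zero-padding embeddings, take ultralimits of coordinates along cofinal ultrafilters (which exist by compactness of balls in $\bbM_r$ and annihilate $\cJ_{\mathbf{k}}$), and use the formula $\|b\|=\inf_\mu\sup_{\lambda\ge\mu}\|b_\lambda\|$ to find, for each element, an ultrafilter attaining its norm. The only differences are cosmetic --- the paper indexes $\Lambda'$ by the unit sphere of $\cQ_{\mathbf{k}}$ with one chosen ultrafilter per element rather than by all cofinal ultrafilters, and your word ``unitally'' should be dropped since zero-padding is not unital (harmlessly so, as unitality is never used).
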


\begin{proof}
Let $r\in \bbN$ such that $\mathbf{k}(\lambda)\leq r$ for every $\lambda\in \Lambda$. We note that if $m\in \bbN$ and $m\leq r$, then there is a completely isometric $*$-homomorphism
\[
\eps_{m}:\bbM_m\to \bbM_r
\]
defined via
\[
\eps_{m}(a)=a\oplus 0_{r-m}, \quad a\in \bbM_m.
\]
Next, given $b = (b_\lambda)_\lambda + \cJ_{\bf{k}} \in \cQ_{\bf{k}}$, it is easily verified that
\[
\|b\|=\inf_{\mu\in \Lambda} \sup_{\lambda\geq \mu} \|b_\lambda\|.
\]
In particular, we see that $\|b\|$ is a cluster point of $\{\|b_\lambda\|:\lambda\in \Lambda\}$. 
Thus, there exists a cofinal ultrafilter $\cF_b$ on $\Lambda$ for which  $\norm b \norm = \lim_{\lambda \to \cF_b} \norm b_\lambda \norm$.   Closed balls in $\bbM_r$ are compact, so that given $(d_\lambda)_\lambda \in \cL_{\mathbf{k}}$ the limit
\[
\lim_{\lambda \to \cF_b} \eps_{\mathbf{k}(\lambda)}(d_\lambda)
 \]
exists in $\bbM_r$.   Note also that since $\cF_b$ is cofinal, we have that
\[
\lim_{\lambda \to \cF_b} \eps_{\mathbf{k}(\lambda)}(d_\lambda)=0
\]
whenever $(d_\lambda)_\lambda\in \cJ_{\mathbf{k}}$. We may therefore define a map
\[
\gamma_b:\cQ_{\mathbf{k}}\to \bbM_r
\]
such that if $d=(d_\lambda)_\lambda+\cJ_{\mathbf{k}}$ then
\[
\gamma_b(d)=\lim_{\lambda\to \cF_b}\eps_{\mathbf{k}(\lambda)}(d_\lambda).
\]
A routine verification establishes that $\gamma_b$ is a ${}^*$-homomorphism. For any $b\in \cQ_{\mathbf{k}}$ we see that 
\[
\norm \gamma_b(b) \norm = \lim_{\lambda \to \cF_b} \norm \eps_{\mathbf{k}(\lambda)}(b_\lambda) \norm = \lim_{\lambda \to \cF_b} \norm b_\lambda \norm=\norm b \norm \]
by choice of the ultrafilter $\cF_b$. 

Finally, let $\Lambda'$ denote the unit sphere of $\cQ_k$. Define $\mathbf{r}:\Lambda'\to \bbN$ as $\mathbf{r}(b)=r$ for every $b\in \Lambda'$. Then, the map 
\[
\Gamma: \cQ_{\mathbf{k}}\to \cM_{\bf{r}}
\]
defined by
\[
\Gamma(d)=(\gamma_b(d))_{b\in \Lambda'}
\] 
is an isometric $*$-homomorphism, and is thus completely isometric.
\end{proof}

%
%

\section{Residually finite-dimensional operator algebras with the total reduction property} \label{sec02}
Motivated by Theorem \ref{thmmatrix}, in this section we examine in detail the structure of subalgebras of $\cM_{\mathbf{k}}$ with the total reduction property, where $\Lambda \ne \varnothing$ is a set and ${\mathbf{k}}:\Lambda \to \bbN$ is a function. We establish one of our main results based partly on this detailed analysis. 

Our first goal is to show that if $\cA \subset \cM_{\mathbf{k}}$ has the total reduction property, then up to completely bounded isomorphism, we may assume that each component map $q^{\mathbf{k}}_\lambda$ is surjective on $\cA$.

\begin{thm} \label{thm2.05}
Let $\Lambda \ne \varnothing$ be a set and let $\mathbf{k}:\Lambda \to \bbN$ be a function.  Suppose that $\cA\subset \cM_{\mathbf{k}} $ is a subalgebra with the total reduction property.  Then, there exist a set $\Lambda'$, a function $\mathbf{m} :\Lambda' \to \bbN$ and a subalgebra $\cB\subset \cM_\mathbf{m}$ which is completely boundedly isomorphic to $\cA$ and such that for every $\alpha \in \Lambda'$ we have that $q^{{\mathbf{m}}}_\alpha (\cB) = \bbM_{\mathbf{m}(\alpha)}.$ Furthermore, $\cB$ is weak-$*$ closed if $\cA$ is.
\end{thm}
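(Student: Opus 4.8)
The plan is to re-index the coordinates of $\cA$ one finite-dimensional block at a time. Fix $\lambda\in\Lambda$ and put $R_\lambda = q^{\mathbf{k}}_\lambda(\cA)\subset\bbM_{k(\lambda)}$. The restriction of $q^{\mathbf{k}}_\lambda$ to $\cA$ is a completely contractive representation of $\cA$ on a $k(\lambda)$-dimensional Hilbert space whose range $R_\lambda$ is (finite-dimensional, hence) closed, so Lemma \ref{lemkapparep} shows that $R_\lambda$ has the total reduction property and that $\kappa_{R_\lambda}(t)\le\kappa_\cA(t)$ for all $t\ge 0$. Applying Theorem \ref{thm2.04} to $R_\lambda\subset B(\mathbb{C}^{k(\lambda)})$ produces an invertible operator $X_\lambda$ such that $D_\lambda := X_\lambda R_\lambda X_\lambda^{-1}$ is a $C^*$-algebra; crucially, since $\kappa_{R_\lambda}\le\kappa_\cA$ and $\kappa_\cA$ is increasing, the estimate in that theorem gives a bound $\|X_\lambda\|\,\|X_\lambda^{-1}\|\le C$ in which the constant $C=C(\kappa_\cA)$ does not depend on $\lambda$. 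Being a finite-dimensional $C^*$-algebra, $D_\lambda$ admits a $*$-isomorphism $\sigma_\lambda: D_\lambda\to\bigoplus_{i\in I_\lambda}\bbM_{n_{\lambda,i}}$ onto a finite direct sum of full matrix algebras.

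Now I would assemble the new algebra. Set $\Lambda' = \{(\lambda,i): \lambda\in\Lambda,\ i\in I_\lambda\}$ and $\mathbf{m}(\lambda,i) = n_{\lambda,i}$, and for each $(\lambda,i)$ let $\psi_{\lambda,i}: \cA\to\bbM_{\mathbf{m}(\lambda,i)}$ be the composition of $q^{\mathbf{k}}_\lambda|_\cA$, conjugation by $X_\lambda$, $\sigma_\lambda$, and the coordinate projection onto the $i$-th summand. Each $\psi_{\lambda,i}$ is a homomorphism; it is \emph{surjective}, because $q^{\mathbf{k}}_\lambda(\cA) = R_\lambda$ is carried \emph{onto} $D_\lambda$ by conjugation and then onto all of $\bbM_{\mathbf{m}(\lambda,i)}$ by $\sigma_\lambda$ followed by the projection; and $\|\psi_{\lambda,i}\|_{cb}\le C$, since the only factor that is not completely contractive is $\operatorname{Ad}X_\lambda$, whose completely bounded norm is at most $\|X_\lambda\|\,\|X_\lambda^{-1}\|\le C$. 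Consequently $\Psi = (\psi_{\lambda,i})_{(\lambda,i)}$ is a well-defined homomorphism $\cA\to\cM_{\mathbf{m}}$ with $\|\Psi\|_{cb}\le C$, and its image $\cB := \Psi(\cA)$ is a subalgebra of $\cM_{\mathbf{m}}$ satisfying $q^{\mathbf{m}}_{(\lambda,i)}(\cB) = \psi_{\lambda,i}(\cA) = \bbM_{\mathbf{m}(\lambda,i)}$ for every $(\lambda,i)$.

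To see that $\Psi$ is a completely bounded \emph{isomorphism} onto $\cB$, I would exhibit an explicit left inverse: put $\Theta_\lambda = \operatorname{Ad}X_\lambda^{-1}\circ\sigma_\lambda^{-1}: \bigoplus_{i\in I_\lambda}\bbM_{n_{\lambda,i}}\to\bbM_{k(\lambda)}$ and $\Theta = (\Theta_\lambda)_\lambda: \cM_{\mathbf{m}}\to\cM_{\mathbf{k}}$. The same uniform bound gives $\|\Theta\|_{cb}\le C$, $\Theta$ is a homomorphism, and unwinding the definitions shows that $\Theta\circ\Psi$ is the inclusion of $\cA$ into $\cM_{\mathbf{k}}$. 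Hence $\Psi: \cA\to\cB$ is a bijective homomorphism whose inverse $\Theta|_\cB$ is completely bounded, so $\cA$ and $\cB$ are completely boundedly isomorphic; in particular $\Psi$ is a topological isomorphism, so $\cB$ is norm-closed because $\cA$ is.

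Finally, suppose $\cA$ is weak-$*$ closed in $\cM_{\mathbf{k}}$, so that $\cA$ carries the relative weak-$*$ topology as a dual space. Each $\psi_{\lambda,i}$ is then weak-$*$ continuous on $\cA$, being the composition of the weak-$*$ continuous coordinate map $q^{\mathbf{k}}_\lambda|_\cA$ with a linear map between finite-dimensional spaces; since $\Psi$ is bounded with weak-$*$ continuous components, it is weak-$*$ continuous on bounded subsets of $\cA$ (on which weak-$*$ convergence in $\cM_{\mathbf{m}}$ reduces to coordinatewise convergence), and in the same way $\Theta$ is weak-$*$ continuous on bounded subsets of $\cM_{\mathbf{m}}$. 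Now take a bounded net $(b_j)$ in $\cB$ with $b_j\to b$ weak-$*$ in $\cM_{\mathbf{m}}$; then $a_j := \Theta(b_j)\in\cA$ is bounded with $a_j\to\Theta(b)$ weak-$*$, so $\Theta(b)\in\cA$, and applying $\Psi$ yields $b_j = \Psi(a_j)\to\Psi(\Theta(b))$ weak-$*$, whence $b = \Psi(\Theta(b))\in\cB$. By the Krein--Smulian theorem, $\cB$ is weak-$*$ closed. I expect the genuine obstacle to be organizational rather than conceptual: one must carry the single constant $C$ through every factorization — which is exactly what the quantitative forms of Lemma \ref{lemkapparep} and Theorem \ref{thm2.04} are built to permit — together with the routine but fiddly weak-$*$ bookkeeping, which is handled by reducing everything to bounded nets.
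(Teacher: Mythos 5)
Your proposal is correct and follows essentially the same route as the paper: a uniform bound on $\kappa_{q^{\mathbf{k}}_\lambda(\cA)}$ via Lemma \ref{lemkapparep}, a coordinatewise similarity with $\lambda$-independent constant via Theorem \ref{thm2.04}, assembly of the $X_\lambda$ into a single completely bounded isomorphism, and a re-indexing using the structure of finite-dimensional $C^*$-algebras so that each new coordinate map is surjective. The only cosmetic difference is that you pass through an abstract $*$-isomorphism $\sigma_\lambda$ onto a multi-matrix algebra (with an explicit two-sided inverse $\Theta$), where the paper instead compresses to the nonzero blocks of the concrete orthogonal decomposition of $\bbC^{\mathbf{k}(\lambda)}$; your Krein--Smulian argument for weak-$*$ closedness is a more detailed version of what the paper asserts tersely.
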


\begin{proof}
For each $\lambda \in \Lambda$, let $\cA_\lambda=q^{{\mathbf{k}}}_\lambda(\cA)$. Note that $\cA_\lambda \subset \bbM_{\mathbf{k}(\lambda)}$ so that $\cA_\lambda$ is necessarily closed and hence has the total reduction property by Lemma \ref{lemkapparep}. Moreover, we see from Lemma \ref{lemkapparep} that
\[
\kappa_{\cA_\lambda}(1)\leq \kappa_{\cA}(\| q^{\mathbf{k}}_\lambda\|)\leq \kappa_{\cA}(1).
\]
By Theorem~\ref{thm2.04}, for each $\lambda \in \Lambda$ there exists an invertible operator $X_\lambda \in \bbM_{\mathbf{k}(\lambda)}$ such that $X_\lambda \cA_\lambda X_\lambda^{-1} \subset\bbM_{\mathbf{k}(\lambda)}$ is a $C^*$-algebra and 
\[
\norm X_\lambda \norm \, \norm X_\lambda^{-1} \norm \le \Delta,
\]
where $\Delta$ is a positive constant depending only on $\kappa_{\cA}(1)$.
Upon rescaling, we may assume that 
\[
\norm X_\lambda \norm  = \|X_\lambda^{-1}\|\leq  \Delta^{1/2}
\]
for each $\lambda \in \Lambda$.  Then, the operator $X = \bigoplus_{\lambda \in \Lambda} X_\lambda \in \cM_{\mathbf{k}}$ is bounded and invertible, and we have
\[
X\cA X^{-1}\subset \bigoplus_{\lambda} X_\lambda \cA_\lambda X_\lambda^{-1}.
\]
For each $\lambda$, there exist a natural number $r_\lambda$ and non-negative integers $d(\lambda,0), d(\lambda, 1), \ldots, d(\lambda,r_\lambda)$  along with an orthogonal decomposition 
\[
\bbC^{\mathbf{k}(\lambda)} =  \bbC^{d(\lambda,0)}\oplus \bbC^{d(\lambda,1)} \oplus \bbC^{d(\lambda,2)} \oplus \cdots \oplus \bbC^{d(\lambda, r_\lambda)}.\]
With respect to this decomposition we must have 
\[
X_\lambda \cA_\lambda X_\lambda^{-1}\subset \{0\}\oplus  \bbM_{d(\lambda,1)} \oplus \bbM_{d(\lambda,2)} \oplus \cdots \oplus \bbM_{d(\lambda,r_\lambda)}
\]
and
\[
q_{d(\lambda,j)}\circ p_\lambda (X_\lambda \cA_\lambda X_\lambda^{-1}) = \bbM_{d(\lambda,j)}
\]
for all $1 \le j \le r_\lambda$, where 
\[
p_\lambda:  \{0\}\oplus  \bbM_{d(\lambda,1)} \oplus \bbM_{d(\lambda,2)} \oplus \cdots \oplus \bbM_{d(\lambda,r_\lambda)}
\to   \bbM_{d(\lambda,1)} \oplus \bbM_{d(\lambda,2)} \oplus \cdots \oplus \bbM_{d(\lambda,r_\lambda)}
\]
and
\[
q_{d(\lambda,j)}: \bbM_{d(\lambda,1)} \oplus \bbM_{d(\lambda,2)} \oplus \cdots \oplus \bbM_{d(\lambda,r_\lambda)}\to \bbM_{d(\lambda,j)}
\]
denote the natural projections. Define $p=\oplus_\lambda p_\lambda$ which is completely isometric $X\cA X^{-1}$ and weak-$*$ homeomorphic on the weak-$*$ closure of $X\cA X^{-1}$. Put $\cB=p(X\cA X^{-1})$. It remains to define the set $\Lambda'$ and the function $\mathbf{m}:\Lambda'  \to \bbN$.  For each $\lambda\in \Lambda$, we define
\[
\Sigma_\lambda=\{(\lambda,j): 1\leq j \leq r_\lambda, d(\lambda,j)\neq 0\}.
\]
We put $\Lambda' = \cup_{\lambda \in \Lambda} \Sigma_\lambda$ and  $m( (\lambda, j)) = d(\lambda, j)$ for all $(\lambda,j)\in \Lambda'$. 
\end{proof}

Next, we make an important observation: simple subalgebras of $\cM_{\bf{k}}$ with the total reduction property are similar to finite-dimensional $C^*$-algebras.

\begin{cor} \label{cor2.07}
Let $\Lambda \ne \varnothing$ be a set, let $\mathbf{k}:\Lambda \to \bbN$ be a function and let $\cA \subset \cM_{\mathbf{k}}$ be a simple subalgebra which has the total reduction property.   Then, $\cA$ is similar to a finite-dimensional $C^*$-algebra.
\end{cor}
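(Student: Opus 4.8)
The plan is to apply Theorem \ref{thm2.05} to reduce to the case where every component map $q^{\mathbf{k}}_\lambda$ is surjective on $\cA$; since complete bounded isomorphism and similarity to a finite-dimensional $C^*$-algebra are both preserved under passing back and forth, and since the theorem preserves simplicity (the completely bounded isomorphism is an algebra isomorphism), there is no loss in assuming $q^{\mathbf{k}}_\lambda(\cA) = \bbM_{\mathbf{k}(\lambda)}$ for every $\lambda \in \Lambda$. First I would observe that if $\Lambda$ has at least two elements, say $\lambda_1 \ne \lambda_2$, then the kernel of $q^{\mathbf{k}}_{\lambda_1}$ restricted to $\cA$ is a proper two-sided ideal of $\cA$; it is nonzero because $q^{\mathbf{k}}_{\lambda_2}$ is surjective, so it must contain elements vanishing in the $\lambda_1$ coordinate but not the $\lambda_2$ coordinate. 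This contradicts simplicity once we check the ideal is \emph{closed} — but here a subtlety arises, since simplicity for a non-self-adjoint operator algebra should be interpreted as having no proper closed two-sided ideals, and $\ker q^{\mathbf{k}}_{\lambda_1}|_{\cA}$ is indeed closed as the kernel of a bounded homomorphism. Hence $\Lambda$ is a singleton, so $\cA \cong \bbM_n$ for some $n$ after the reduction, and in particular $\cA$ is a finite-dimensional operator algebra with the total reduction property.

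Second, with $\cA$ now finite-dimensional, I would invoke Theorem \ref{thm2.04}: there is an invertible operator $X$ such that $X\cA X^{-1}$ is a $C^*$-algebra. Composing the similarity from Theorem \ref{thm2.04} with the completely bounded isomorphism supplied by Theorem \ref{thm2.05}, we conclude that $\cA$ is similar to a finite-dimensional $C^*$-algebra — but I should be careful about whether "completely boundedly isomorphic" can be upgraded to "similar." Indeed, if $\cB \subset \cM_{\mathbf{m}}$ is completely boundedly isomorphic to $\cA$, that does not a priori mean $\cA$ and $\cB$ act on the same space or are spatially related; however, the statement to be proved only asks that $\cA$ be \emph{similar} to a finite-dimensional $C^*$-algebra, meaning there is an invertible operator $X$ on the ambient Hilbert space of $\cA$ with $X\cA X^{-1}$ a $C^*$-algebra. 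So I would instead argue directly: $\cA \subset \cM_{\mathbf{k}} \subset B(\cH)$ for $\cH = \bigoplus_\lambda \bbC^{\mathbf{k}(\lambda)}$, and run the argument of the first paragraph to see that $\cA$, up to the isomorphism of Theorem \ref{thm2.05}, sits in a single matrix block; then Theorem \ref{thm2.04} applies to this single block. Alternatively, and more cleanly, I would note that the simplicity argument shows $\cA$ itself (before any reduction) has the property that its image under the completely isometric embedding into a product of matrix algebras is, after the Theorem \ref{thm2.05} normalization, a single $\bbM_n$-block, and then apply Theorem \ref{thm2.04} on the nose.

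The main obstacle I anticipate is bookkeeping around the notion of "simple" for non-self-adjoint operator algebras and making sure the ideal $\ker q^{\mathbf{k}}_{\lambda_1}|_{\cA}$ is genuinely an obstruction to simplicity — i.e., confirming it is both proper and nonzero and closed, and that it is a genuine two-sided ideal of $\cA$ (which is clear since $q^{\mathbf{k}}_{\lambda_1}$ is an algebra homomorphism). A secondary point is tracking that the reduction in Theorem \ref{thm2.05} preserves simplicity: the completely bounded isomorphism $\cA \to \cB$ is an algebra isomorphism, so the lattice of closed two-sided ideals is preserved (a closed ideal maps to a closed ideal under a bounded bijective homomorphism with bounded inverse), hence $\cB$ is simple iff $\cA$ is. Once these are nailed down, the argument is short: simplicity forces $|\Lambda'| = 1$ in the normalized picture, so $\cB = \bbM_n$ is already a $C^*$-algebra, and pulling back the completely bounded isomorphism gives that $\cA$ is similar — in fact completely boundedly isomorphic, which for a finite-dimensional target is what we want — to $\bbM_n$; if one insists on spatial similarity, one applies Theorem \ref{thm2.04} to the original copy of $\cA$ after observing it occupies a single block.
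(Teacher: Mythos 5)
There is a genuine error in your first step. You claim that if $\Lambda$ has two distinct elements $\lambda_1\neq\lambda_2$ with both component maps surjective on $\cA$, then $\ker q^{\mathbf{k}}_{\lambda_1}|_{\cA}$ must be nonzero ``because $q^{\mathbf{k}}_{\lambda_2}$ is surjective.'' This does not follow: surjectivity of the $\lambda_2$-component says nothing about the existence of elements of $\cA$ that vanish in the $\lambda_1$-coordinate without vanishing in the $\lambda_2$-coordinate. The diagonal algebra
\[
\cA=\{\,T\oplus T: T\in \bbM_n\,\}\subset \bbM_n\oplus\bbM_n
\]
is simple, has the total reduction property, has both component maps surjective, and yet both kernels are $\{0\}$. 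Consequently your conclusion that $\Lambda'$ must be a singleton is false, and the rest of the argument, which repeatedly leans on ``$\cA$ occupies a single block,'' does not go through.

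The repair is to run the simplicity argument in the opposite direction, which is what the paper does. After the Theorem \ref{thm2.05} normalization, fix any single $\lambda_0\in\Lambda$ and observe that $\ker q^{\mathbf{k}}_{\lambda_0}|_{\cA}$ is a closed two-sided ideal which is \emph{proper} (its quotient is $\bbM_{\mathbf{k}(\lambda_0)}\neq\{0\}$), hence equal to $\{0\}$ by simplicity. Thus $q^{\mathbf{k}}_{\lambda_0}|_{\cA}$ is a bounded isomorphism of $\cA$ onto the finite-dimensional $C^*$-algebra $\bbM_{\mathbf{k}(\lambda_0)}$, regardless of the cardinality of $\Lambda$; one then concludes that $\cA$ is similar to a $*$-isomorphic copy of $\bbM_{\mathbf{k}(\lambda_0)}$ by the result of \cite{Wri1980}. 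The ``completely boundedly isomorphic versus similar'' worry you raise in your second paragraph is handled in the paper at the outset by \cite[Theorem 1.10]{Haa1983s}, which legitimizes proving the statement only for a completely boundedly isomorphic image of $\cA$; your appeal to Theorem \ref{thm2.04} ``on the original copy of $\cA$'' is not available, since the original $\cA$ need not be finite-dimensional a priori and, as the diagonal example shows, need not sit in a single block even after normalization.
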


\begin{proof}
Invoking \cite[Theorem 1.10]{Haa1983s}, we see that it is sufficient to prove the statement for a completely boundedly isomorphic image of $\cA$. Hence, by virtue of Theorem~\ref{thm2.05} we may assume that  $q^{\mathbf{k}}_\lambda(\cA) = \bbM_{\mathbf{k}(\lambda)}$ for every $\lambda \in \Lambda$. Fix $\lambda_0 \in \Lambda$, and consider the surjective, contractive homomorphism $q_{\lambda_0}^{\mathbf{k}}|_\cA:\cA\to \bbM_{\mathbf{k}(\lambda_0)}$. Since $\cA$ is simple, we see that $q^{\mathbf{k}}_{\lambda_0}|_\cA$ is invertible, and hence that $\cA$ is boundedly isomorphic to the finite-dimensional $C^*$-algebra $\bbM_{\mathbf{k}(\lambda_0)}$. We conclude that $\cA$ must be similar to a ${}^*$-isomorphic image of $\bbM_{\mathbf{k}(\lambda_0)}$~\cite{Wri1980}.
\end{proof}

We mention in passing that Corollary \ref{cor2.07} can be extended to cover the case where the algebra possesses finitely many ideals. We leave the details to the interested reader.

Before proceeding with the main result of this section, we require two preliminary facts. We first establish a useful estimate.

\begin{lem} \label{lem2.10}
Let $\Lambda \ne \varnothing$ be a set and let $\mathbf{k}: \Lambda \to \bbN$ be the constant function 
\[
\mathbf{k}(\lambda) = k, \quad \lambda \in \Lambda
\]
for some fixed $k\in \bbN$. Let $(X_\lambda)_{\lambda \in \Lambda}$ be  a collection of invertible operators in  $\bbM_{k}$ and let
\[
\cA = \left\{ \bigoplus_{\lambda \in \Lambda} X_\lambda^{-1} T X_\lambda : T \in \bbM_{k}\right\}. \]
Suppose that $\cA \subset \cM_{\mathbf{k}}$, that $\cA$ has the total reduction property and that there exists $\lambda_0 \in \Lambda$ for which $X_{\lambda_0} = I$.  Then 
\[
\sup_\lambda \norm X_\lambda \norm \, \norm X_\lambda^{-1} \norm \le 4 \kappa_\cA(1)^2. \]
\end{lem}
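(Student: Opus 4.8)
The plan is to produce a single representation of $\cA$ whose image, essentially by construction, contains a copy of $\bbM_k$ sitting ``diagonally'' in $B(\bbC^k \oplus \bbC^k)$ and whose invariant subspaces encode all the similarities $X_\lambda$ at once. Concretely, fix a particular $\mu \in \Lambda$ and consider the map $\theta_\mu : \cA \to B(\bbC^k \oplus \bbC^k)$ sending $\bigoplus_\lambda X_\lambda^{-1} T X_\lambda$ to $T \oplus (X_\mu^{-1} T X_\mu)$. Since $X_{\lambda_0} = I$ this is a genuine (unital) contractive representation: indeed, precomposing the contractive evaluation map $q^{\mathbf{k}}_{\lambda_0}|_\cA : \cA \to \bbM_k$ (which is an isomorphism onto $\bbM_k$ because of the $\lambda_0$-coordinate) with $T \mapsto T \oplus X_\mu^{-1} T X_\mu$ gives $\theta_\mu$, and each of the two summands is a contraction, so $\|\theta_\mu\| \le 1$. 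The relevant invariant subspace is the graph $M_\mu = \{ (X_\mu \xi, \xi) : \xi \in \bbC^k \}$, which is $\theta_\mu(\cA)$-invariant precisely because $(X_\mu^{-1} T X_\mu) \xi$ is mapped to $T X_\mu \xi$ under $\xi \mapsto X_\mu \xi$; equivalently $M_\mu = \operatorname{ran} W_\mu$ where $W_\mu = \begin{psmallmatrix} X_\mu \\ I \end{psmallmatrix}$ intertwines $T$ with $T \oplus X_\mu^{-1} T X_\mu$.

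Now invoke the total reduction property of $\cA$: since $\|\theta_\mu\| \le 1$, there is an idempotent $E_\mu \in \theta_\mu(\cA)'$ with $\operatorname{ran} E_\mu = M_\mu$ and $\|E_\mu\| \le \kappa_\cA(1)$. The next step is the standard linear-algebra computation expressing such an $E_\mu$ explicitly: writing $E_\mu$ as a $2\times 2$ operator matrix and using that its range is the graph $M_\mu$ and that it is idempotent, one finds $E_\mu = \begin{psmallmatrix} I & X_\mu \\ 0 & 0 \end{psmallmatrix}$ in the decomposition adapted to $M_\mu \oplus M_\mu^{\mathrm{ambient}}$; more usefully, in the original coordinates $E_\mu$ has first column $\begin{psmallmatrix} * \\ * \end{psmallmatrix}$ from which $X_\mu$ can be read off, and a direct estimate gives $\|X_\mu\| \le C\,\kappa_\cA(1)$ and $\|X_\mu^{-1}\| \le C\,\kappa_\cA(1)$ for an absolute constant. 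The cleanest route: the idempotent onto the graph of $X_\mu$ inside $B(\bbC^k\oplus\bbC^k)$ that commutes with everything of the form $T\oplus X_\mu^{-1}TX_\mu$ is forced to be $E_\mu = W_\mu (W_\mu^* W_\mu)^{?}\cdots$ — rather, one uses that any idempotent with range $M_\mu$ and commuting with the representation must equal $\begin{psmallmatrix} I & -X_\mu \\ 0 & 0\end{psmallmatrix}$ up to the obvious change of variables, so $\|X_\mu\|$ is controlled by $\|E_\mu\| \le \kappa_\cA(1)$, and then applying the same argument with the roles of the two summands reversed (or with $X_\mu$ replaced by $X_\mu^{-1}$, using that $\cA$ is also $\bigoplus X_\lambda^{-1} T X_\lambda$ written the other way) controls $\|X_\mu^{-1}\|$. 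Multiplying the two bounds and tracking the constants gives $\|X_\mu\|\,\|X_\mu^{-1}\| \le 4\kappa_\cA(1)^2$, and since $\mu$ was arbitrary we get the supremum bound.

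The main obstacle is getting the numerical constant exactly $4\kappa_\cA(1)^2$ rather than some larger absolute multiple of $\kappa_\cA(1)^2$: this requires choosing the intertwiner/idempotent picture carefully (e.g. working with the specific idempotent whose matrix is $\begin{psmallmatrix} I & X_\mu\\ 0 & 0 \end{psmallmatrix}$, whose norm is comparable to $\max(1,\|X_\mu\|)$, and being equally careful on the $X_\mu^{-1}$ side) and combining the two one-sided estimates optimally. A secondary point to handle with care is the normalization $X_{\lambda_0}=I$, which is what makes $\theta_\mu$ contractive rather than merely bounded — without it the $\kappa_\cA$ argument would pick up an extra factor of $\sup_\lambda\|X_\lambda\|\,\|X_\lambda^{-1}\|$, which is exactly the quantity we are trying to bound, so the hypothesis is used essentially. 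Everything else (invariance of the graph, the structure of commuting idempotents onto a graph, the $2\times 2$ matrix bookkeeping) is routine.
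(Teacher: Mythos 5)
Your overall strategy coincides with the paper's: the same two-block representation $T\mapsto T\oplus X_\mu^{-1}TX_\mu$ (contractive precisely because $X_{\lambda_0}=I$), the same graph subspace, and the same appeal to the total reduction property to produce an idempotent in the commutant of norm at most $\kappa_\cA(1)$. The problems are all in the final estimate, and one of them is a genuine gap rather than bookkeeping.

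First, your description of the idempotent is incorrect. The commutant of $\{T\oplus X_\mu^{-1}TX_\mu : T\in\bbM_k\}$ is the four-dimensional algebra of matrices $\begin{bmatrix}\alpha I&\beta X_\mu\\ \gamma X_\mu^{-1}&\delta I\end{bmatrix}$, and the idempotents in it whose range is the graph form a one-parameter family $\begin{bmatrix}\alpha I&\beta X_\mu\\ \alpha X_\mu^{-1}&\beta I\end{bmatrix}$ with $\alpha+\beta=1$; the matrix $\begin{bmatrix}I&\pm X_\mu\\0&0\end{bmatrix}$ you propose has range $\bbC^k\oplus\{0\}$, not the graph, so the idempotent is neither unique nor of that form. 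Since the total reduction property only hands you \emph{some} idempotent of norm at most $\kappa_\cA(1)$, you must bound $\|E\|$ from below uniformly over the whole family; this is done via $\max\{|\alpha|,|\beta|\}\geq 1/2$, which gives only $\|E\|\geq\tfrac12\min\{\|X_\mu\|,\|X_\mu^{-1}\|\}$.

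Second --- the real gap --- that single invariant subspace therefore controls only $\min\{\|X_\mu\|,\|X_\mu^{-1}\|\}$, and your proposed complementary estimate does not exist: swapping the two summands is conjugation by the flip unitary and yields the identical inequality, and $\cA$ cannot be rewritten with $X_\lambda^{-1}$ in place of $X_\lambda$ (that would require $X_\lambda^{-2}TX_\lambda^{2}$ to be independent of $\lambda$). The paper's resolution is to rescale each $X_\lambda$ at the outset so that $\|X_\lambda\|=\|X_\lambda^{-1}\|=(\|X_\lambda\|\,\|X_\lambda^{-1}\|)^{1/2}$ --- this changes neither $\cA$ nor the normalization $X_{\lambda_0}=I$ --- after which the single lower bound becomes $\|E\|\geq\|X_\mu\|/2$ and immediately yields $\|X_\mu\|\,\|X_\mu^{-1}\|=\|X_\mu\|^2\leq 4\kappa_\cA(1)^2$. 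Without that normalization your argument stalls exactly at the point you flagged as the main obstacle.
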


\begin{proof}
Upon rescaling if necessary, we may assume that 
\[
\norm X_\lambda \norm = \norm X_\lambda^{-1} \norm=\norm X_\lambda \norm^{1/2} \, \norm X_\lambda^{-1}\| ^{1/2}
\]
for all $\lambda \in \Lambda$.   For each  $\nu \in \Lambda$ with $\nu\neq \lambda_0$, let
\[
\Gamma_\nu: \cA \to \bbM_{2k}
\]
be defined by
\[
\Gamma_\nu \left(\bigoplus_{\lambda \in \Lambda} X_\lambda^{-1} T X_\lambda \right) = T \oplus X_\nu^{-1} T X_\nu \]
for every $T\in \bbM_k$. Observe that since $X_{\lambda_0}=I$, each such map $\Gamma_\nu$ is a contractive homomorphism. Therefore $\Gamma_\nu (\cA)$
has the total reduction property by Lemma \ref{lemkapparep}.
Now consider the subspace 
\[
W_\nu = \{ \xi \oplus X_\nu^{-1} \xi: \xi \in\bbC^k\}.
\]
Clearly, $W_\nu$ is invariant for $\Gamma_\nu(\cA)$, so there exists an idempotent $E_\nu \in \Gamma_\nu (\cA)^\prime $ with $\norm E_\nu \norm\le \kappa_{\cA}(1)$ and whose range is $W_\nu$. Routine calculations show that
\[
 \Gamma_\nu (\cA)^\prime  = \left\{ \begin{bmatrix} \alpha I_{k} & \beta X_\nu \\ \gamma X_\nu^{-1} & \delta I_k \end{bmatrix}: \alpha, \beta, \gamma, \delta \in \bbC \right\}\subset \bbM_{2k}. 
\]
Using the fact that the range of $E_\nu$ is $W_\nu$, we infer that 
\[
E_\nu = \begin{bmatrix} \alpha_\nu I_k & \beta_\nu X_\nu \\ \alpha_\nu X_\nu^{-1} & \beta_\nu I_k \end{bmatrix}\]
for an appropriate choice of $\alpha_\nu, \beta_\nu \in \bbC$. Moreover, since $E_\nu$ is idempotent and  $\dim\, W_\nu = k$, we deduce that $k=\text{tr}(E_\nu)$. On the other hand, we have that $\text{tr}(E_\nu)=(\alpha_\nu+\beta_\nu)k$, so that $\alpha_\nu + \beta_\nu = 1$. In particular 
\[
\max\{|\alpha_\nu|, |\beta_\nu| \}\ge 1/2
\]
whence
\[
\norm E_\nu \norm \ge \max \{\norm \alpha_\nu X_\nu^{-1} \norm, \norm \beta_\nu X_\nu \norm \} \ge  \norm X_\nu\norm /2.
\] 
Thus $\norm X_\nu \norm \le 2 \norm E_\nu \norm \le 2\kappa_{\cA}(1)$.   We conclude that
\[
\norm X_\nu \norm \ \norm X_\nu^{-1} \norm =\|X_\nu\|^2 \le 4 \kappa_{\cA}(1)^2
\]
for every $\nu \in \Lambda \setminus \{ \lambda_0\}$.  Since $X_{\lambda_0} = I_k$ and $\kappa_{\cA}(1) \ge 1$, we are done.
\end{proof}

We also need a property of central idempotents in weak-${}^*$ closed algebras having the total reduction property.

\begin{lem}\label{lemsum}
Let $\cA\subset B(\cH)$ be a weak-${}^*$ closed algebra with the total reduction property and with unit $u\in \cA$. Let $(e_i)_{i\in I}$ be a family of central idempotents in $\cA$ such that
\[
\cap_{i\in I} (u-e_i)\cA=\{0\}.
\]
Let $\cS\subset B(\cH)$ denote the smallest weak-${}^*$ closed subspace containing $e_i \cA$ for every $i\in I$. Then, $\cA=\cS$.
\end{lem}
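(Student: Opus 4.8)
The plan is to show that $\cS = \cA$ by exhibiting, for an arbitrary $a \in \cA$, an approximation of $a$ by elements of $\cS$ in the weak-$*$ topology; equivalently, since $\cS$ is weak-$*$ closed and convex, it suffices to show that every weak-$*$ continuous linear functional vanishing on $\cS$ also vanishes on $\cA$. So suppose $\omega$ is a weak-$*$ continuous functional on $B(\cH)$ with $\omega(e_i a) = 0$ for every $i \in I$ and every $a \in \cA$; the goal is to deduce $\omega(a) = 0$ for all $a \in \cA$.

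The key structural input is that each $e_i$ is a \emph{central} idempotent in $\cA$, so (using the unit $u$ of $\cA$, which exists by the remark following Theorem~\ref{thmidealw*}) we have the topological direct sum decomposition $\cA = e_i \cA \,+\, (u - e_i)\cA$, with both summands being weak-$*$ closed two-sided ideals. The hypothesis $\cap_i (u-e_i)\cA = \{0\}$ says that the ideals $(u-e_i)\cA$ have trivial intersection. The heart of the argument will be to combine this with the following observation: for a fixed $a \in \cA$, write $a = e_i a + (u - e_i)a$; since $\omega$ kills $e_i\cA$, we get $\omega(a) = \omega((u-e_i)a)$ for \emph{every} $i$. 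Thus the single number $\omega(a)$ is simultaneously recorded on all the pieces $(u-e_i)a$. I would then try to leverage the fact that the net (or family) of partial products $\prod_{i \in F}(u - e_i)$ over finite subsets $F \subset I$ — which makes sense because the $e_i$ are commuting central idempotents, and these products are again idempotents in the weak-$*$ closed algebra $\cA'' $-ish setting — has range shrinking toward $\cap_i (u-e_i)\cH$-type spaces, eventually forcing the common value to be $0$. Concretely: since the $e_i$ commute, $f_F := \prod_{i\in F}(u-e_i)$ is an idempotent in $\cA$, and $f_F \cA = \cap_{i \in F}(u-e_i)\cA$; the family $\{f_F \cA\}_F$ is decreasing with $\cap_F f_F\cA = \cap_{i} (u-e_i)\cA = \{0\}$.

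The main obstacle, and where I expect to spend the most care, is passing from "the decreasing family of subspaces $f_F \cA$ has zero intersection" to "the corresponding idempotents $f_F$ converge weak-$*$ to $0$ (or at least $f_F a \to 0$ weak-$*$ for each fixed $a$)." In a von Neumann algebra this would be immediate from the convergence of a decreasing net of projections to its infimum, but here the $f_F$ are merely bounded idempotents, not projections, and the algebra is a general weak-$*$ closed operator algebra with the total reduction property rather than a von Neumann algebra. The total reduction property gives uniform control: by the discussion preceding Lemma~\ref{lemkapparep}, there is a uniform bound $\kappa_\cA(1)$ on the norms of the idempotents implementing invariant subspaces, and by Theorem~\ref{thmidealw*} the central idempotents $f_F$ (being generators of the weak-$*$ closed ideals $f_F\cA$) can be taken with $\|f_F\| \le \kappa_\cA(1)$, uniformly in $F$. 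So the net $(f_F)_F$ is bounded, hence has a weak-$*$ cluster point $f_\infty$, which one checks is again a central idempotent in $\cA$; and $f_\infty \cA \subseteq \cap_F f_F\cA = \{0\}$, forcing $f_\infty = 0$. Running this cluster-point argument inside $\omega$: since $\omega(a) = \omega(f_F a)$ for all $F$ (this is the iterated version of $\omega(a) = \omega((u-e_i)a)$, using that $\omega$ kills each $e_j\cA$ and $f_F$ is a product of the $(u-e_j)$), and since some subnet of $(f_F a)$ converges weak-$*$ to $f_\infty a = 0$, we conclude $\omega(a) = \lim \omega(f_F a) = \omega(0) = 0$. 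As $a \in \cA$ was arbitrary, $\omega$ vanishes on $\cA$, and therefore $\cS = \cA$. The remaining routine points to verify are: that $f_F \in \cA$ and is central (product of central idempotents), that $f_F \cA = \cap_{i \in F}(u-e_i)\cA$ and hence $\cap_F f_F\cA = \{0\}$, that the uniform norm bound from Theorem~\ref{thmidealw*} applies to each $f_F$, and the identity $\omega(a) = \omega(f_F a)$ — each of which is short.
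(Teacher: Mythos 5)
Your argument is correct, but it takes a genuinely different route from the paper's. The paper simply observes that $\cS$ is itself a weak-${}^*$ closed two-sided ideal of $\cA$ and applies Theorem \ref{thmidealw*} to it directly: this yields a single central idempotent $f$ with $f\cA=\cS$; since $e_i\in\cS$ forces $e_if=e_i$, one gets $(u-f)a=(u-e_i)(u-f)a$ for every $i$, hence $(u-f)a\in\cap_{i\in I}(u-e_i)\cA=\{0\}$ and $a=fa\in\cS$ --- four lines, no duality, no compactness. You instead run a Hahn--Banach argument against weak-${}^*$ continuous functionals, using the finite products $f_F=\prod_{i\in F}(u-e_i)$ and a weak-${}^*$ cluster point of the net $(f_F)_F$. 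Note that Theorem \ref{thmidealw*} is not avoided in your version, only relocated: each $f_F\cA=\{b\in\cA: f_Fb=b\}$ is a weak-${}^*$ closed ideal, the theorem produces a central idempotent $e$ with $e\cA=f_F\cA$ and $\|e\|\leq\kappa_\cA(1)$, and the identification $e=f_F$ (two commuting idempotents generating the same ideal in a unital algebra coincide, since $e=f_Fe$ and $f_F=ef_F$) is what legitimizes your phrase ``can be taken with $\|f_F\|\leq\kappa_\cA(1)$''; without this uniform bound Alaoglu gives you nothing, so this step is essential rather than routine bookkeeping. Two further points to tighten: (i) you need not (and should not casually claim to) check that the cluster point $f_\infty$ is an idempotent --- weak-${}^*$ limits of idempotents need not be idempotents, as multiplication is only separately weak-${}^*$ continuous; what you actually use is that for fixed $a$ the subnet $f_{F_\beta}a$ is eventually in the weak-${}^*$ closed set $f_{F_0}\cA$ for each $F_0$, whence $f_\infty a\in\cap_F f_F\cA=\{0\}$ directly; (ii) record the trivial inclusion $\cS\subseteq\cA$, since the functional argument only yields $\cA\subseteq\cS$. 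In the end your proof buys little over the paper's shorter one, since both rest on the same structural input, but it does make visible the quantitative role of the uniform bound $\kappa_\cA(1)$ on the complementary idempotents, which is exactly where the weak-${}^*$ closedness and total reduction hypotheses enter.
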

\begin{proof}
We note that $e_i \cA\subset \cA$ for every $i\in I$, so that $\cS\subset \cA$ since $\cA$ is weak-${}^*$ closed. It is readily checked that $\cS$ is a weak-${}^*$ closed two-sided ideal, and thus by Theorem \ref{thmidealw*} there is a central idempotent $f\in \cA$ with $f\cA=\cS$.  Note that $e_i=e_i u\in e_i \cA\subset \cS$, so that $e_i f=e_i$ and $e_i(u-f)=0$ for every $i\in I$ . We claim that $f\cA=\cA$. Indeed, if $a\in \cA$ then we see that 
\[
(u-f)a=(u-e_i)(u-f)a
\]
for every $i\in I$, so that 
\[
(u-f)a\in \cap_{i\in I} (u-e_i)\cA=\{0\}
\]
whence $fa=a$. We conclude that $\cA=f\cA=\cS$.
\end{proof}

We can now prove one of the main results of the paper.

\begin{thm} \label{thm2.13}
Let $\Lambda \ne \varnothing$ be a set and let $\mathbf{k}:\Lambda\to \bbN$ be a function.  Suppose that $\cA \subset \cM_{\mathbf{k}}$ is a subalgebra which has the total reduction property.  
If $\cA$ is weak-${}^*$ closed, then $\cA$ is similar to a $C^*$-algebra.
\end{thm}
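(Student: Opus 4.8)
The plan is to reduce to a normalized situation, extract a rigid block decomposition of $\cA$ from the minimal weak-${}^*$ closed ideals $\ker q^{\mathbf{k}}_\lambda$, and then use Lemma \ref{lem2.10} to untwist all the blocks simultaneously with a single bounded invertible operator. Invoking \cite[Theorem 1.10]{Haa1983s}, it suffices to prove the statement for a completely boundedly isomorphic image of $\cA$, so by Theorem \ref{thm2.05} I may assume $q^{\mathbf{k}}_\lambda(\cA)=\bbM_{\mathbf{k}(\lambda)}$ for every $\lambda$, with $\cA$ still weak-${}^*$ closed and (via Lemma \ref{lemkapparep}) still reductive; applying $q^{\mathbf{k}}_\lambda$ to the unit of $\cA$, which exists by the remark after Theorem \ref{thmidealw*}, shows that unit is $I$. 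Each $q^{\mathbf{k}}_\lambda|_\cA$ is weak-${}^*$ continuous, being a corner map on the von Neumann algebra $\cM_{\mathbf{k}}$, so $\cJ_\lambda:=\ker(q^{\mathbf{k}}_\lambda|_\cA)$ is a weak-${}^*$ closed two-sided ideal and hence $\cJ_\lambda=e_\lambda\cA$ for a central idempotent $e_\lambda$ by Theorem \ref{thmidealw*}. Set $f_\lambda=I-e_\lambda$; then $q^{\mathbf{k}}_\lambda$ restricts to an isomorphism of $f_\lambda\cA$ onto $\bbM_{\mathbf{k}(\lambda)}$, so $f_\lambda\cA$ is simple, and since $f_\lambda$ is central in $\cA$ while $q^{\mathbf{k}}_\mu$ is onto $\bbM_{\mathbf{k}(\mu)}$, the idempotent $q^{\mathbf{k}}_\mu(f_\lambda)$ is central in $\bbM_{\mathbf{k}(\mu)}$ and so equals $0$ or $I$. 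Write $S_\lambda=\{\mu:q^{\mathbf{k}}_\mu(f_\lambda)=I\}$; then $\lambda\in S_\lambda$ and $f_\lambda\cA$ is supported on the coordinates in $S_\lambda$.

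The crucial observation is that distinct $f_\lambda$'s have disjoint supports: if $\mu\in S_\lambda\cap S_{\lambda'}$ then $q^{\mathbf{k}}_\mu(f_\lambda f_{\lambda'})=I\neq 0$, so the ideal $f_\lambda f_{\lambda'}\cA$ of the simple algebra $f_\lambda\cA$ is all of $f_\lambda\cA$, whence $f_\lambda f_{\lambda'}=f_\lambda$; by symmetry and commutativity of these central idempotents, $f_\lambda=f_{\lambda'}$. Hence the distinct sets among the $S_\lambda$ partition $\Lambda$, say $\Lambda=\bigsqcup_{t\in T}S_t$, with $f_t$ the common value of $f_\lambda$ over $S_t$. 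For $\mu\in S_t$ the restriction $q^{\mathbf{k}}_\mu|_{f_t\cA}$ is onto $\bbM_{\mathbf{k}(\mu)}$ with kernel an ideal of the simple algebra $f_t\cA$, hence an isomorphism; it follows that $\mathbf{k}$ is constant on $S_t$, say equal to $k_t$, and (identifying $f_t\cA$ with its compression to the coordinates in $S_t$) that $f_t\cA=\bigl\{\bigoplus_{\mu\in S_t}Y_\mu^{-1}TY_\mu:T\in\bbM_{k_t}\bigr\}$ for suitable invertible $Y_\mu\in\bbM_{k_t}$, normalized so that $Y_\mu=I$ at a base point of $S_t$. Since $\bigcap_\lambda e_\lambda\cA=\bigcap_\lambda\ker q^{\mathbf{k}}_\lambda=\{0\}$, Lemma \ref{lemsum} shows that $\cA$ is the smallest weak-${}^*$ closed subspace of $\cM_{\mathbf{k}}$ containing all the $f_t\cA$; because the supports $S_t$ are pairwise disjoint, this says precisely that $\cA=\bigoplus^{\ell^\infty}_{t\in T}f_t\cA$ inside $\cM_{\mathbf{k}}$.

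Next I would apply Lemma \ref{lem2.10} to each $f_t\cA$ together with the constant function $k_t$. This is legitimate: $f_t\cA=\rho_t(\cA)$ where $\rho_t(a)=f_t a$, and $\|f_t\|=\|I-e_t\|\le 1+\kappa_\cA(1)$ by Theorem \ref{thmidealw*}, so $\kappa_{f_t\cA}(1)\le\kappa_\cA(1+\kappa_\cA(1))$ by Lemma \ref{lemkapparep}. Lemma \ref{lem2.10} then gives $\sup_{\mu\in S_t}\|Y_\mu\|\,\|Y_\mu^{-1}\|\le 4\kappa_\cA(1+\kappa_\cA(1))^2$, a bound independent of $t$. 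After rescaling the $Y_\mu$ by scalars, $V:=\bigoplus_{\mu\in\Lambda}Y_\mu$ is a bounded invertible element of $\cM_{\mathbf{k}}$, and conjugation by $V$ carries each $f_t\cA$ onto the ``constant'' copy $\{(T)_{\mu\in S_t}:T\in\bbM_{k_t}\}$, a ${}^*$-subalgebra of $\prod_{\mu\in S_t}\bbM_{k_t}$. Since conjugation by $V\in\cM_{\mathbf{k}}$ is a weak-${}^*$ homeomorphism, $V\cA V^{-1}$ is the smallest weak-${}^*$ closed subspace containing the self-adjoint algebras $Vf_t\cA V^{-1}$, hence is itself self-adjoint (the adjoint being weak-${}^*$ continuous on $\cM_{\mathbf{k}}$); being also a weak-${}^*$ closed algebra, it is a von Neumann algebra. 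Thus $\cA$ is similar to a $C^*$-algebra, indeed to $\bigoplus^{\ell^\infty}_{t\in T}\bbM_{k_t}$.

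I expect the heart of the difficulty to lie in the rigidity analysis of the middle paragraphs — showing that $\cA$ is forced to decompose as an $\ell^\infty$-direct sum of \emph{single} twisted matrix blocks, with the matrix sizes constant on each block, rather than into something with genuinely nested ideal structure — together with the bookkeeping needed to apply Lemma \ref{lem2.10} block by block with a constant independent of the block. Once that reduction is in place, producing the similarity is routine.
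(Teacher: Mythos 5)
Your proof is correct and takes essentially the same route as the paper's: reduce via Theorem~\ref{thm2.05} to the case where every coordinate map is surjective, use Theorem~\ref{thmidealw*} to split off the minimal weak-${}^*$ closed ideals $\ker q^{\mathbf{k}}_\lambda$ by central idempotents with disjoint supports, recover $\cA$ as the weak-${}^*$ closed direct sum via Lemma~\ref{lemsum}, and untwist each simple block with Lemma~\ref{lem2.10} using a bound uniform over the blocks. The differences are cosmetic: your $e_\lambda$ and $f_\lambda$ swap roles relative to the paper's notation, and your observation that the unit equals $I$ slightly sharpens the $\kappa$-estimates.
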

\begin{proof}
We start by noting once again that it is sufficient to prove the statement for a completely boundedly isomorphic image of $\cA$. Thus, by virtue of Theorem~\ref{thm2.05} we may assume that  $\cA$ is a weak-$*$ closed subalgebra of $\cM_k$ such that $q^{\mathbf{k}}_\lambda(\cA) = \bbM_{\mathbf{k}(\lambda)}$ for every $\lambda \in \Lambda$.

For each $\lambda\in \Lambda$, consider the weak-${}^*$ closed ideal $\cJ_\lambda = \ker q^{\mathbf{k}}_\lambda$.  Then 
\[
\cA / \cJ_\lambda \simeq q^{\mathbf{k}}_\lambda(\cA)=  \bbM_{\mathbf{k}(\lambda)}.
 \]  
Moreover, by Theorem \ref{thmidealw*} there exists a central idempotent $f_\lambda \in\cJ_\lambda \cap \cA'$ with $\cJ_\lambda=f_\lambda \cA$, $\|f_\lambda\|\leq \kappa_{\cA}(1)$ and such that we have a topological direct sum decomposition
\[
\cA=\cJ_\lambda+(u-f_\lambda)\cA
\]
where $u$ denotes the unit of $\cA$ and satisfies $\|u\|\leq \kappa_{\cA}(1)$ (see the discussion following Theorem \ref{thmidealw*}).
Set now $e_\lambda = (u - f_\lambda)$ for each $\lambda \in \Lambda$, and  put $\cK_\lambda = e_\lambda \cA$ so that 
\[
\cA = \cJ_\lambda+ \cK_\lambda.
\]
By Lemma \ref{lemkapparep} we see that $\cK_\lambda$ has the total reduction property and that 
\[
\kappa_{\cK_\lambda}(1)\leq \kappa_{\cA}(\|e_\lambda\|)\leq \kappa_{\cA}(2\kappa_\cA(1)).
\]
Moreover, we note that
\[
\cK_\lambda \simeq \cA/\cJ_\lambda \simeq \bbM_{\mathbf{k}(\lambda)}
\]
and consequently $\cK_\lambda$ is simple and a minimal ideal of $\cA$. Hence, for each $\alpha\in \Lambda$ we must have that $q^{\mathbf{k}}_\alpha(\cK_\lambda)$ is a two-sided ideal of $\bbM_{\mathbf{k}(\alpha)}$, and therefore is either $\{0\}$ or 
$\bbM_{\mathbf{k}(\alpha)}$. Equivalently, $q^{\mathbf{k}}_\alpha(e_\lambda) = 0$ or $q^{\mathbf{k}}_\alpha(e_\lambda) = I$.
  
For each $\lambda \in \Lambda$ let
\[
\Delta_\lambda = \{ \alpha \in \Lambda: q^{\mathbf{k}}_\alpha(e_\lambda) = I \}.
\]
Note that  for every $\lambda \in \Lambda$ we have $q^{\mathbf{k}}_\lambda(u)=I$ since $q^{\mathbf{k}}_\lambda(\cA) = \bbM_{\mathbf{k}(\lambda)}$. Recall that 
\[
f_\lambda \cA=\cJ_\lambda = \ker q^{\mathbf{k}}_\lambda
\]
 thus
\[
0=q^{\mathbf{k}}_\lambda(f_\lambda)=I-q^{\mathbf{k}}_\lambda(e_\lambda)
\]
so that $\lambda \in \Delta_\lambda$ for every $\lambda \in \Lambda$. In particular, we see that
\[
\Lambda=\cup_{\lambda \in \Lambda}\Delta_\lambda.
\]
If $\lambda_1, \lambda_2 $ are distinct elements of $\Lambda$,  then by minimality we must have either $\cK_{\lambda_1} = \cK_{\lambda_2}$ or $\cK_{\lambda_1} \cap \cK_{\lambda_2} = \{ 0 \}$.
Note also that $\cK_{\lambda_1} \cap \cK_{\lambda_2} = \{ 0 \}$ if and only if $e_{\lambda_1}e_{\lambda_2}=0$, and $\cK_{\lambda_1}= \cK_{\lambda_2}$ if and only if $e_{\lambda_1}=e_{\lambda_2}$. We conclude that if $\lambda_1,\lambda_2 \in \Lambda$, then either $\Delta_{\lambda_1} = \Delta_{\lambda_2}$, or $\Delta_{\lambda_1} \cap \Delta_{\lambda_2} = \varnothing$. The equivalence relation
\[
\lambda_1\sim \lambda_2 \quad \text{if and only if} \quad e_{\lambda_1}=e_{\lambda_2}
\]
partitions $\Lambda$ into a set $\Omega$ of disjoint subsets. For each $\omega\in \Omega$, choose $\lambda_\omega\in \omega$. Then, we have the following disjoint union
\[
\cup_{\omega\in \Omega}\Delta_{\lambda_\omega}=\Lambda.
\]
If $\omega_1,\omega_2\in \Omega$ are distinct, then $e_{\lambda_{\omega_1}}e_{\lambda_{\omega_2}}=0$. Let now $a\in \cA$ and assume that $e_{\lambda_\omega}a=0$ for every $\omega\in \Omega$. Then, we see that $q^{\mathbf{k}}_\alpha(a)=0$ for every $\alpha \in \cup_{\omega\in \Omega}\Delta_{\lambda_\omega}=\Lambda$, so that $a=0$. By Lemma \ref{lemsum}, we conclude that $\cA$ is the smallest weak-${}^*$ closed subspace containing $e_{\lambda_\omega}\cA$ for every $\omega\in \Omega$. On the other hand,  if $\omega_1,\omega_2\in \Omega$ are distinct, then $\Delta_{\lambda_{\omega_1}}\cap \Delta_{\lambda_{\omega_2}}=\varnothing$. Hence, we find that
\[
\cA=\bigoplus_{\omega\in \Omega} e_{\lambda_\omega}\cA=\bigoplus_{\omega\in \Omega} \cK_{\lambda_\omega}.
\]

Now, for each $\lambda\in \Lambda$ we have that $q^{\mathbf{k}}_\alpha(\cK_\lambda) = \bbM_{\mathbf{k}(\alpha)}$ for all $\alpha \in \Delta_\lambda$.  Since $\cK_\lambda$ is simple, we see that $q^{\mathbf{k}}_\alpha|_{\cK_\lambda}$ is a bounded isomorphism between $\cK_\lambda$ and $ \bbM_{\mathbf{k}(\alpha)}$ for every $\alpha \in \Delta_\lambda$. In particular, $\mathbf{k}$ is constant on $\Delta_\lambda$. In addition, if we let $\mu_\lambda$ be a fixed element of $\Delta_\lambda$, then we see that for each $\alpha \in \Delta_\lambda$ there is an invertible operator $X_{\lambda,\alpha}\in \bbM_{\mathbf{k}(\alpha)}$ such that
\[
X_{\lambda, \alpha}q^{\mathbf{k}}_{\mu_\lambda}(a)X_{\lambda, \alpha}^{-1}=q^{\mathbf{k}}_{\alpha}(a)
\]
for every $a\in \cK_\lambda$. Since
\[
\cK_\lambda=\left\{\bigoplus_{\alpha\in \Lambda}q^{\mathbf{k}}_{\alpha}(a):a\in \cK_\lambda \right\},
\]
we see that up to a (unitary) reordering of the components 
\[
\cK_\lambda =\bigoplus_{\alpha \in \Lambda \setminus\Delta_\lambda} \{0\}\oplus \left\{ \bigoplus_{\alpha \in \Delta_\lambda }X_{\lambda, \alpha}bX_{\lambda, \alpha}^{-1}:b\in  \bbM_{\mathbf{k}(\mu_\lambda)} \right\}.
\]
By Lemmas \ref{lemkapparep} and \ref{lem2.10}, we find that 
\[
\norm X_{\lambda, \alpha}^{-1} \norm \ \norm X_{\lambda, \alpha} \norm \le 4 \kappa_{\cK_\lambda}(1)^2  \le 4 \kappa_{\cA}(2\kappa_\cA(1))^2 \]
for every $\alpha \in \Delta_\lambda$.   Upon  rescaling we may assume that
\[
\norm X_{\lambda, \alpha} \norm = \norm X_{\lambda, \alpha}^{-1} \norm\leq 2\kappa_{\cA}(2\kappa_\cA(1)).
\]
for every $\alpha\in \Delta_\lambda$. 

Hence, the operator
\[
X=\bigoplus_{\omega \in \Omega}\left(\bigoplus_{\alpha\in \Lambda \setminus \Delta_{\lambda_\omega}}I\oplus \bigoplus_{\alpha \in \Delta_{\lambda_\omega}} X_{\lambda_\omega,\alpha}\right)
\]
is invertible with bounded inverse. Finally, we find
\begin{align*}
X^{-1}\cA X=&X^{-1}\left( \bigoplus_{\omega\in \Omega}\cK_{\lambda_\omega}\right)X\\
&=\bigoplus_{\omega\in \Omega}\left(\bigoplus_{\alpha \in \Lambda \setminus\Delta_{\lambda_\omega}} \{0\}\oplus \left\{ \bigoplus_{\alpha \in \Delta_{\lambda_\omega}}b:b\in  \bbM_{\mathbf{k}(\mu_{\lambda_\omega})} \right\} \right)\\
&\simeq  \bigoplus_{\omega\in \Omega} \bbM_{\mathbf{k}(\mu_{\lambda_\omega})}
\end{align*}
which is a $C^*$-algebra.
\end{proof}

The reader will notice that the reason we require the algebra above to be weak-${}^*$ closed is to avail ourselves of Theorem \ref{thmidealw*}. We now record a straightforward consequence of the previous result.

\begin{cor} \label{cor2.14}
Let $\Lambda \ne \varnothing$ be a set and let $\mathbf{k}: \Lambda \to \bbN$ be a function.  Suppose that $\cA \subset \cM_{\mathbf{k}}$ is a subalgebra which has the total reduction property.  If $\cA$ is weak-$*$ closed, then it has the SP property .
\end{cor}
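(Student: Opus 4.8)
The plan is to feed the conclusion of Theorem~\ref{thm2.13} into Gifford's characterization of $C^*$-algebras with Kadison's similarity property, and then transport the resulting SP property back along the similarity. By Theorem~\ref{thm2.13}, there is an invertible operator $X$, acting on the same Hilbert space as $\cA$, such that $\fA := X^{-1}\cA X$ is a $C^*$-algebra. One might hope to conclude at once by invoking Christensen's theorem, but the $C^*$-algebra produced by Theorem~\ref{thm2.13} is an $\ell^\infty$-direct sum of matrix algebras of possibly unbounded dimension, hence need not be amenable; this shortcut is thus unavailable.

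Instead, I would first observe that $\fA$ inherits the total reduction property from $\cA$. Indeed, $\fA$ is the range of the representation $\rho:\cA\to B(\cH)$ given by $\rho(a)=X^{-1}aX$, and this range is norm-closed, being a $C^*$-algebra; hence Lemma~\ref{lemkapparep} applies and $\fA$ has the total reduction property. Since $\fA$ is a $C^*$-algebra with the total reduction property, Theorem~\ref{thmGifKSP}(2) shows that $\fA$ has Kadison's similarity property, and moreover that every representation $\theta$ of $\fA$ satisfies $\|\theta\|_{cb}\le 128\,\kappa_{\fA}(\|\theta\|)^2<\infty$. In other words, $\fA$ has the SP property.

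Finally, since the SP property is invariant under similarity of operator algebras --- the routine fact recalled in the introduction --- and $\cA=X\fA X^{-1}$, we conclude that $\cA$ also has the SP property. I do not anticipate a genuine obstacle here; the single point that warrants a moment's care is the verification that the total reduction property descends to $\fA$, which is precisely what permits the use of Theorem~\ref{thmGifKSP} in place of Christensen's result on amenable $C^*$-algebras.
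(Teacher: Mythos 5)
Your proof is correct and follows essentially the same route as the paper: apply Theorem~\ref{thm2.13} to obtain a similarity to a $C^*$-algebra, transfer the total reduction property to that $C^*$-algebra via Lemma~\ref{lemkapparep}, invoke Theorem~\ref{thmGifKSP} to get Kadison's similarity property (hence the SP property), and carry the SP property back along the similarity. The only difference is that you spell out explicitly two steps the paper leaves implicit --- the transfer of the total reduction property and the similarity-invariance of the SP property --- and correctly note that Christensen's theorem is not available here since the resulting $C^*$-algebra need not be amenable.
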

\begin{proof}
We know that $\cA$ is similar to a $C^*$-algebra by Theorem \ref{thm2.13}. Next, recall that $C^*$-algebras with the total reduction property have Kadison's similarity property by Theorem \ref{thmGifKSP}, so that $\cA$ must have the SP property.
\end{proof}

Finally, we close this section with an application to triangular algebras (see Example \ref{egtriangular} for the definition).

\begin{cor}\label{cortriang1}
Let $\cA\subset B(\cH)$ be a weak-${}^*$ closed triangular operator algebra with the total reduction property. Then, $\cA$ is similar to a $C^*$-algebra.
\end{cor}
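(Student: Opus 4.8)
The plan is to use the concrete residually finite-dimensional model of a triangular algebra from Example~\ref{egtriangular} in order to bring Theorem~\ref{thm2.13} to bear. By the definition of a triangular operator algebra there is a net $(P_\lambda)_\lambda$ of finite rank projections increasing strongly to $I$ such that $\cA$ is contained in the algebra of operators triangular with respect to $(P_\lambda)_\lambda$. Put $\mathbf{k}(\lambda)=\dim P_\lambda\cH$ and identify $B(P_\lambda\cH)$ with $\bbM_{\mathbf{k}(\lambda)}$, so that $\cM_{\mathbf{k}}=\prod_\lambda B(P_\lambda\cH)$ acts on $\cK=\bigoplus_\lambda P_\lambda\cH$. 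By Example~\ref{egtriangular}, the maps $\varrho_\lambda(T)=P_\lambda T P_\lambda$ assemble into a completely isometric, weak-$*$ homeomorphic algebra homomorphism $\rho=\bigoplus_\lambda\varrho_\lambda$ of $\cA$ onto a subalgebra $\rho(\cA)\subset\cM_{\mathbf{k}}$.

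Next I would check that $\rho(\cA)$ is again weak-$*$ closed, this time inside $\cM_{\mathbf{k}}$. Since $\rho$ is isometric and weak-$*$ continuous on $\cA$, and since $\cA$, being weak-$*$ closed in $B(\cH)$, has weak-$*$ compact balls, for each $r>0$ the intersection of $\rho(\cA)$ with the closed ball of radius $r$ in $\cM_{\mathbf{k}}$ equals $\rho(\{a\in\cA:\|a\|\le r\})$, which is the weak-$*$ continuous image of a weak-$*$ compact set and hence weak-$*$ compact; the Krein--Smulian theorem then gives that $\rho(\cA)$ is weak-$*$ closed. Moreover $\rho$ is a completely isometric isomorphism of $\cA$ onto $\rho(\cA)$, so $\rho(\cA)$ is complete, hence closed in $\cM_{\mathbf{k}}$, and Lemma~\ref{lemkapparep} (applied to the representation $\rho$ of $\cA$) shows that $\rho(\cA)$ has the total reduction property. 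Thus $\rho(\cA)$ satisfies the hypotheses of Theorem~\ref{thm2.13}, which produces an invertible operator $Y\in B(\cK)$ such that $\cB:=Y\rho(\cA)Y^{-1}$ is a $C^*$-algebra.

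It only remains to transport this similarity back to $B(\cH)$. The composition $\psi=\mathrm{Ad}_Y\circ\rho:\cA\to\cB$ is an algebra isomorphism which is completely bounded with completely bounded inverse, since $\rho$ is completely isometric and conjugation by $Y$ and by $Y^{-1}$ is completely bounded. Hence $\cA$ is completely boundedly isomorphic to the $C^*$-algebra $\cB$, and as $\cB$ is trivially similar to a $C^*$-algebra, \cite[Theorem 1.10]{Haa1983s}---invoked exactly as in the proof of Corollary~\ref{cor2.07}---shows that $\cA$ is similar to a $C^*$-algebra. (Alternatively, $\cB$ is isometrically isomorphic to $\cA$ and therefore also has the total reduction property, hence Kadison's similarity property by Theorem~\ref{thmGifKSP}; applying the latter to the representation $\psi^{-1}:\cB\to B(\cH)$ yields an invertible $X\in B(\cH)$ for which $X\cA X^{-1}=X\psi^{-1}(\cB)X^{-1}$ is a $C^*$-subalgebra of $B(\cH)$.) The only genuinely non-formal points are the preservation of weak-$*$ closedness under $\rho$ and the final passage from $B(\cK)$ back to $B(\cH)$; neither presents real difficulty, so the corollary is essentially an immediate consequence of Theorem~\ref{thm2.13}.
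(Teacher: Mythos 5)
Your proposal is correct and follows the same route as the paper: embed $\cA$ completely isometrically and weak-$*$ homeomorphically into a product of matrix algebras via Example~\ref{egtriangular}, apply Lemma~\ref{lemkapparep} and Theorem~\ref{thm2.13} to conjugate the image to a $C^*$-algebra, and pull the similarity back using Haagerup's theorem on completely bounded homomorphisms of $C^*$-algebras. The extra details you supply (Krein--Smulian for weak-$*$ closedness of the image, and the explicit transport back to $B(\cH)$) are exactly the points the paper leaves implicit, so this is the same argument, just more fully written out.
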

\begin{proof}
By Example \ref{egtriangular}, we know that there is a completely isometric weak-${}^*$ homeomorphic homomorphism $\Phi: \cA\to \cL_{\bf{}k}$, for some net $\mathbf{k}:\Lambda\to \bbN$. Thus, by Lemma \ref{lemkapparep} and Theorem \ref{thm2.13} we see there is an invertible operator $X$ such that $X\Phi(\cA)X^{-1}$ is a $C^*$-algebra. The map
\[
X\Phi(a)X^{-1}\mapsto a, \quad a\in \cA
\]
is completely bounded on a $C^*$-algebra, and thus is similar to a ${}^*$-homomorphism \cite{Haa1983s}. Consequently, $\cA$ is similar to the image of a $C^*$-algebra under a ${}^*$-homomorphism, and so is similar to a $C^*$-algebra.
\end{proof}
	

\section{Representations with residually finite-dimensional range} \label{sec04}

In the previous section, we proved that a weak-$*$ closed residually finite-dimensional operator algebra with the total reduction property has the SP property: all its representations are automatically completely bounded. In other words, we restricted our attention to the case where  the domains of the representations are contained in a product of matrix algebras. In this section, we shift our focus to the range and assume that it is residually finite-dimensional, while the domain is allowed to be an arbitrary operator algebra with the total reduction property.

The driving force behind our efforts in this section is the following simple observation, which we record for ease of reference.

\begin{lem} \label{lem4.05}
Let  $\cA$ and $\cB$ be operator algebras. Assume that $\cB$ is residually finite-dimensional and let $(\rho_\lambda)_\lambda$ be the corresponding family of completely contractive representations of $\cB$ such that $\bigoplus_\lambda \varrho_\lambda$ is completely isometric.  Let $\theta: \cA \to \cB$ be a representation. Then $\theta$ is completely bounded if and only if $\sup_\lambda \norm \varrho_\lambda \circ \theta \norm_{cb} < \infty$.
\end{lem}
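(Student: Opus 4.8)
The plan is to prove the two directions separately, with the forward direction being essentially immediate and the reverse direction requiring a uniform boundedness argument.

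First, suppose $\theta$ is completely bounded. Since each $\varrho_\lambda$ is completely contractive, we have $\|\varrho_\lambda \circ \theta\|_{cb} \leq \|\varrho_\lambda\|_{cb}\, \|\theta\|_{cb} \leq \|\theta\|_{cb}$ for every $\lambda$, and hence $\sup_\lambda \|\varrho_\lambda \circ \theta\|_{cb} \leq \|\theta\|_{cb} < \infty$. This requires nothing beyond submultiplicativity of the completely bounded norm.

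Conversely, suppose $C := \sup_\lambda \|\varrho_\lambda \circ \theta\|_{cb} < \infty$. Fix $n \in \bbN$ and let $a = [a_{i,j}] \in \bbM_n(\cA)$. For each $\lambda$ we have
\[
\|(\varrho_\lambda \circ \theta)^{(n)}(a)\|_{\bbM_n(B(\hilb_\lambda))} \leq \|\varrho_\lambda \circ \theta\|_{cb}\, \|a\|_{\bbM_n(\cA)} \leq C\, \|a\|_{\bbM_n(\cA)}.
\]
Now, since the map $\Psi := \bigoplus_\lambda \varrho_\lambda : \cB \to \bigoplus_\lambda B(\hilb_\lambda)$ is completely isometric, we have
\[
\|\theta^{(n)}(a)\|_{\bbM_n(\cB)} = \|\Psi^{(n)}(\theta^{(n)}(a))\| = \sup_\lambda \|(\varrho_\lambda \circ \theta)^{(n)}(a)\| \leq C\, \|a\|_{\bbM_n(\cA)},
\]
where the first equality uses complete isometry of $\Psi$ and the second uses that the norm in $\bbM_n(\bigoplus_\lambda B(\hilb_\lambda))$ of an element is the supremum over $\lambda$ of the component norms (viewing $\bigoplus_\lambda B(\hilb_\lambda)$ as $B(\bigoplus_\lambda \hilb_\lambda)$, matrix amplification respects the direct sum decomposition). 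Thus $\|\theta^{(n)}\| \leq C$ for every $n$, whence $\|\theta\|_{cb} \leq C < \infty$.

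There is no serious obstacle here; the only point requiring a moment's care is the identification of the matrix norm on a direct sum of operator algebras with the supremum of the component matrix norms, which is standard and follows from the fact that $\bigoplus_\lambda B(\hilb_\lambda) \subset B(\bigoplus_\lambda \hilb_\lambda)$ and amplifying to $\bbM_n$ commutes with forming the direct sum of Hilbert spaces. One should also note at the outset that the family $(\varrho_\lambda)_\lambda$ appearing in the statement is precisely the one witnessing residual finite-dimensionality of $\cB$, so that $\bigoplus_\lambda \varrho_\lambda$ is completely isometric by definition, which is exactly what the reverse direction exploits.
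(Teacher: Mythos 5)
Your proof is correct and is exactly the routine argument the authors had in mind: the paper states this lemma as a ``simple observation'' and omits the proof entirely, and your two directions (submultiplicativity of the cb-norm for the forward implication, complete isometry of $\bigoplus_\lambda \varrho_\lambda$ plus the identification of the matrix norm on a product with the supremum of component norms for the reverse) supply precisely the missing details.
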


Although completely elementary, this lemma shows that to determine whether a representation with residually finite-dimensional range is completely bounded, we may restrict our attention to finite-dimensional representations. In particular, we have the following consequence which the reader may want to compare with Theorem \ref{thmmatrix}.

\begin{thm} \label{thm4.08}
Let $\cA$ be an operator algebra and let $\mathbf{k}:\Lambda\to \bbN$ be a bounded net. If $\theta: \cA\to \cQ_\mathbf{k}$ is a bounded representation, then $\theta$ is completely bounded.
\end{thm}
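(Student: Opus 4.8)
The plan is to combine the residual finite dimensionality of $\cQ_{\mathbf{k}}$ (established in Proposition \ref{prop4.07}) with Lemma \ref{lem4.05} in order to reduce to the finite-dimensional situation, and then to invoke Kadison's similarity property for the nuclear (hence amenable) $C^*$-algebra $\bbM_r$ to produce a uniform bound. First I would apply Proposition \ref{prop4.07}: since $\mathbf{k}$ is bounded, there is a set $\Lambda'$, a \emph{constant} function $\mathbf{r}:\Lambda'\to\bbN$ with value some fixed $r\in\bbN$, and a completely isometric ${}^*$-homomorphism $\Gamma:\cQ_{\mathbf{k}}\to\cM_{\mathbf{r}}$. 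Writing $q^{\mathbf{r}}_b$ for the component maps of $\cM_{\mathbf{r}}$, we set $\varrho_b=q^{\mathbf{r}}_b\circ\Gamma:\cQ_{\mathbf{k}}\to\bbM_r$ for $b\in\Lambda'$; each $\varrho_b$ is a completely contractive representation and $\bigoplus_b\varrho_b=\Gamma$ is completely isometric, so $\cQ_{\mathbf{k}}$ is residually finite-dimensional via the family $(\varrho_b)_{b\in\Lambda'}$, with all the ranges living in the single matrix algebra $\bbM_r$.

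Next I would bring in Lemma \ref{lem4.05}: the representation $\theta:\cA\to\cQ_{\mathbf{k}}$ is completely bounded if and only if $\sup_{b\in\Lambda'}\|\varrho_b\circ\theta\|_{cb}<\infty$. So it suffices to bound $\|\varrho_b\circ\theta\|_{cb}$ uniformly in $b$. Fix $b\in\Lambda'$ and consider $\psi_b=\varrho_b\circ\theta:\cA\to\bbM_r$. Since each $\varrho_b$ is completely contractive, $\|\psi_b\|\leq\|\theta\|$ for every $b$. Now let $\cB_b=\overline{\psi_b(\cA)}\subset\bbM_r$, a (necessarily closed, finite-dimensional) operator algebra; by Lemma \ref{lemkapparep}, $\cB_b$ has the total reduction property, and moreover $\kappa_{\cB_b}(t)\leq\kappa_{\cA}(t\|\psi_b\|)\leq\kappa_{\cA}(t\|\theta\|)$ for all $t\geq 0$. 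Regard $\psi_b$ as a surjective representation $\cA\to\cB_b$, and apply Theorem \ref{thm2.04} to the finite-dimensional algebra $\cB_b\subset\bbM_r$: there is an invertible $X_b\in\bbM_r$ with $X_b\cB_b X_b^{-1}$ a $C^*$-algebra and
\[
\|X_b\|\,\|X_b^{-1}\|\leq (1+\kappa_{\cB_b}(1))^2\,128\,(1+2\kappa_{\cB_b}(1))\,\kappa_{\cB_b}(1+2\kappa_{\cB_b}(1))^2.
\]
Because $\kappa_{\cB_b}(1)\leq\kappa_{\cA}(\|\theta\|)$, this right-hand side is bounded by a constant $\Delta=\Delta(\kappa_{\cA},\|\theta\|)$ depending only on $\cA$ and $\|\theta\|$, \emph{not} on $b$.

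To finish I would estimate $\|\psi_b\|_{cb}$ using this similarity. Conjugation by $X_b$ is completely bounded with $\|X_b(\cdot)X_b^{-1}\|_{cb}\leq\|X_b\|\,\|X_b^{-1}\|\leq\Delta$ (and likewise for its inverse), so $\|\psi_b\|_{cb}\leq\Delta^2\,\|X_b\psi_b(\cdot)X_b^{-1}\|_{cb}$, and $a\mapsto X_b\psi_b(a)X_b^{-1}$ is a bounded representation of $\cA$ into the $C^*$-algebra $X_b\cB_b X_b^{-1}\subset\bbM_r$ whose norm is at most $\Delta\|\theta\|$. Any bounded homomorphism from an operator algebra into a $C^*$-algebra is completely bounded with $\|\cdot\|_{cb}$ controlled by a function of its norm — concretely, compose with the inclusion into $B(\bbC^r)$ and note that a bounded representation of any operator algebra is completely bounded with $cb$-norm bounded in terms of its norm when the range is a $C^*$-algebra (this is where one uses that $C^*$-algebras have Kadison's similarity property in the form of Theorem \ref{thmGifKSP}, or more directly that a bounded unital homomorphism into $B(\cK)$ whose range generates a nuclear $C^*$-algebra is similar to a $*$-representation, via Christensen's theorem). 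In any case one obtains $\|X_b\psi_b(\cdot)X_b^{-1}\|_{cb}\leq g(\Delta\|\theta\|)$ for a fixed function $g$, hence $\|\varrho_b\circ\theta\|_{cb}=\|\psi_b\|_{cb}\leq\Delta^2\,g(\Delta\|\theta\|)$, a bound independent of $b$. Lemma \ref{lem4.05} then yields that $\theta$ is completely bounded, with $\|\theta\|_{cb}\leq\Delta^2\,g(\Delta\|\theta\|)$.

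The main obstacle, I expect, is precisely the uniformity: one must make sure that the similarity constant $\Delta$ and the ensuing $cb$-bound do not depend on the index $b$, which is exactly why Proposition \ref{prop4.07} is used in the sharp form giving a \emph{constant} $\mathbf{r}\equiv r$ (so all the finite-dimensional ranges sit inside one fixed $\bbM_r$), and why the $\kappa$-estimates in Lemma \ref{lemkapparep} and Theorem \ref{thm2.04} must be invoked with constants depending only on $\cA$ and $\|\theta\|$. A secondary technical point is the passage to the non-degenerate or unital setting for the $C^*$-algebra $X_b\cB_b X_b^{-1}$ before quoting Christensen's similarity theorem; this is harmless since $\bbM_r$ is unital and finite-dimensional, but it should be handled cleanly, e.g.\ by adjoining a unit inside $\bbM_r$ if necessary.
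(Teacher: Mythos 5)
Your opening reduction --- Proposition \ref{prop4.07} to embed $\cQ_{\mathbf{k}}$ completely isometrically into $\cM_{\mathbf{r}}$ with $\mathbf{r}$ constant equal to $r$, then Lemma \ref{lem4.05} to reduce to a uniform bound on $\|\varrho_b\circ\theta\|_{cb}$ --- is exactly the paper's. But everything after that point goes wrong, because Theorem \ref{thm4.08} does \emph{not} assume that $\cA$ has the total reduction property: $\cA$ is an arbitrary operator algebra. Your appeal to Lemma \ref{lemkapparep} to give $\cB_b=\overline{\psi_b(\cA)}$ the total reduction property, and hence your use of $\kappa_{\cA}$ and of Theorem \ref{thm2.04}, all rest on a hypothesis that is not available here. (The version of this circle of ideas that does assume the total reduction property is Theorem \ref{thm4.12} and Corollary \ref{cor4.15}, proved later by essentially the argument you sketch.)

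The concluding step is also unsound even if one grants the similarity: the principle you invoke --- that a bounded homomorphism from an operator algebra \emph{into} a $C^*$-algebra is automatically completely bounded with cb-norm controlled by its norm --- is false in general; Pisier's bounded, non-completely-bounded representation of the disc algebra has range inside $B(\cH)$, which is a $C^*$-algebra. Christensen's theorem and Theorem \ref{thmGifKSP} concern representations whose \emph{domain} is a $C^*$-algebra with the relevant property, and say nothing about an arbitrary domain $\cA$. What actually closes the argument, and what the paper uses, is Smith's lemma \cite{Smi1983}: any bounded linear map $\phi$ into $\bbM_r$ satisfies $\|\phi\|_{cb}\le r\|\phi\|$. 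Applied directly to $\psi_b=\varrho_b\circ\theta$ this gives $\|\psi_b\|_{cb}\le r\|\theta\|$ uniformly in $b$, with no algebraic hypotheses on $\cA$ whatsoever; the entire similarity detour is unnecessary, and could in any case only be completed by invoking Smith's lemma at the end.
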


\begin{proof}
By Proposition~\ref{prop4.07} we can find another set $\Lambda'\neq \varnothing$ and a constant function $\mathbf{r}:\Lambda'\to \bbN$ for which there exists a completely isometric ${}^*$-homomorphism 
\[
\Gamma: \cQ_{\mathbf{k}}\to \cM_{\bf{r}}.
\]
It suffices to show that $\Gamma\circ \theta$ is completely bounded. To see this, use Lemma \ref{lem4.05} to conclude that $\theta$ is completely bounded if and only if
\[
\sup_\lambda \|q^{\mathbf{k}}_\lambda\circ\theta\|_{cb}<\infty.
\]
Now, a classical result of R.R.~Smith~\cite{Smi1983} shows that  for any $\lambda\in \Lambda$ we have
	\begin{align*}
	\norm q^{\mathbf{k}}_\lambda \circ \theta \norm_{cb} 
		\le \mathbf{k}(\lambda) \norm q^{\mathbf{k}}_\lambda \circ \theta \norm \leq  \left(\sup_{\lambda\in \Lambda}\mathbf{k}(\lambda)\right) \norm \theta \norm.
	\end{align*}
The proof is complete.
\end{proof}


\bigskip

The remainder of this section is devoted to establishing another one of our main results dealing with representations of operator algebras with the total reduction property whose range are residually finite-dimensional. The key technical tool is the following observation, which generalizes the fact that finite-dimensional $C^*$-algebras can be faithfully represented on finite-dimensional Hilbert spaces. We suspect it is well-known. The idea is reminiscent of that found in the proof of \cite[Theorem 6.3]{Pis2003}, which apparently has its origins in the work of Dixon. We also note that the classical Blecher-Ruan-Sinclair theorem \cite{BRS1990} may not produce a finite-dimensional Hilbert space and thus does not meet our specific needs.

\begin{prop}\label{prop4.11}
Let $\cA$ be a finite-dimensional operator algebra, and fix $\varepsilon>0$ and $d\in \bbN$. Then, there exist a finite-dimensional Hilbert space $\cH$ and a completely contractive homomorphism $\phi:\cA\to B(\cH)$ with 
\[
\|\phi^{(n)}(A)\|_{\bbM_n(B(\cH))}\geq (1-\varepsilon) \|A\|_{\bbM_n(\cA)}
\] 
for every $A\in \bbM_n(\cA)$ and every $1\leq n \leq d$.
\end{prop}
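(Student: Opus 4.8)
The plan is to produce $\phi$ by taking a direct sum of finitely many finite-dimensional representations, one for each point of a finite $\varepsilon$-net on the unit sphere of the relevant matrix amplifications. First I would recall that for each $n$ the norm on $\bbM_n(\cA)$ can be computed from completely contractive representations: since $\cA$ sits inside some $B(\cH_0)$, by Arveson's extension and the Wittstock/Paulsen machinery, every element $A\in\bbM_n(\cA)$ satisfies $\|A\|_{\bbM_n(\cA)}=\sup\|\psi^{(n)}(A)\|$, where the supremum runs over all completely contractive homomorphisms $\psi:\cA\to B(\cK)$ (indeed the identity inclusion already realizes the norm). The point, however, is that a single such $\psi$ realizing the norm of a given $A$ need not act on a \emph{finite-dimensional} space. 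So the first real step is a compactness/approximation argument: given $A\in\bbM_n(\cA)$ and a completely contractive $\psi:\cA\to B(\cK)$ with $\|\psi^{(n)}(A)\|$ close to $\|A\|_{\bbM_n(\cA)}$, compress $\psi$ to a suitable finite-dimensional subspace of $\cK$. The subtlety is that a compression of a homomorphism is generally not multiplicative, so I cannot simply cut down $\psi$; instead I would use that $\cA$ is finite-dimensional to choose a single large enough finite-dimensional reducing-ish subspace, or better, replace $\psi$ from the outset by a finite-dimensional one.

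The cleanest route to a finite-dimensional $\psi$ realizing (approximately) a given matrix norm is the idea attributed to Dixon and used in \cite[Theorem 6.3]{Pis2003}: consider the \emph{universal} completely contractive representation and note that, because $\cA$ is finite-dimensional, the relevant universal operator algebra $\mathrm{OA}(\cA)$ (or just the image of $\cA$ under the sum of all cyclic completely contractive representations on separable spaces) has the property that matrix norms are attained on cyclic representations, and a cyclic completely contractive representation of a finite-dimensional algebra generated by finitely many operators can be approximated, in the relevant finitely many matrix amplifications simultaneously, by finite-dimensional representations. More concretely: fix $d$, fix a finite $(\varepsilon/3)$-net $F_n$ in the unit sphere of $\bbM_n(\cA)$ for each $1\le n\le d$, and for each $A\in\bigcup_{n=1}^d F_n$ pick a \emph{finite-dimensional} completely contractive homomorphism $\phi_A:\cA\to B(\cH_A)$ with $\|\phi_A^{(n)}(A)\|\ge (1-\varepsilon/2)\|A\|$. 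Granting the existence of each individual $\phi_A$, set $\cH=\bigoplus_{A} \cH_A$ and $\phi=\bigoplus_A \phi_A$; this is a finite direct sum of finite-dimensional spaces, hence finite-dimensional, $\phi$ is completely contractive as a direct sum of completely contractive maps, and for any $B\in\bbM_n(\cA)$ with $\|B\|=1$ and $1\le n\le d$, choosing $A\in F_n$ with $\|B-A\|_{\bbM_n(\cA)}<\varepsilon/3$ gives
\[
\|\phi^{(n)}(B)\|\ \ge\ \|\phi_A^{(n)}(A)\|-\|\phi^{(n)}(B-A)\|\ \ge\ (1-\varepsilon/2)-\varepsilon/3\ \ge\ 1-\varepsilon ,
\]
and then one homogenizes in $B$. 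So the whole proposition reduces to the single-element finite-dimensional claim.

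For that single-element claim — given $A\in\bbM_n(\cA)$, produce a finite-dimensional completely contractive $\phi_A$ nearly attaining $\|A\|_{\bbM_n(\cA)}$ — I would argue as follows. Start with \emph{any} completely contractive homomorphism $\psi:\cA\to B(\cK)$, say the identity inclusion $\cA\hookrightarrow B(\cH_0)$, so $\|\psi^{(n)}(A)\|=\|A\|_{\bbM_n(\cA)}$; pick unit vectors $\xi,\eta\in\cK^{(n)}$ with $|\langle \psi^{(n)}(A)\xi,\eta\rangle|$ close to $\|A\|$. Let $\cA$ be spanned by $a_1,\dots,a_m$ (finite-dimensional!) and let $\cK_0\subset\cK$ be the (finite-dimensional) linear span of the finitely many vectors obtained by applying words of length $\le 1$ in $\psi(a_1),\dots,\psi(a_m)$ to the finitely many coordinates of $\xi,\eta$; more carefully, take $\cK_0$ to be spanned by $\{$coordinates of $\xi$, coordinates of $\eta$, and $\psi(a_i)\cdot(\text{coord of }\xi)\}$, which is enough to see the matrix coefficient. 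Then $\psi(a_i)\cK_0$ need not lie in $\cK_0$, so instead of compressing I would use that $\cA$ being finite-dimensional means $\psi(\cA)$ is a finite-dimensional operator \emph{algebra}, and apply the structure afforded by the fact that matrix norms of a finite-dimensional operator algebra are already computed by its action on a single finite-dimensional space up to $\varepsilon$ — this is exactly the Dixon-type argument: approximate $\psi$ by the compression to an \emph{invariant} finite-dimensional subspace, whose existence follows from a standard $\varepsilon$-net/weak-compactness argument over the finitely many generators acting on the finitely many relevant vectors, iterated finitely many times and truncated. \textbf{The main obstacle is precisely making this compression argument multiplicative and finite-dimensional at the same time}; I expect to handle it by invoking the cited Dixon/Pisier idea almost verbatim — working with the finitely many generators $a_1,\dots,a_m$ and the finitely many amplification levels $n\le d$, one builds an increasing sequence of finite-dimensional subspaces that is ``approximately invariant'' and shows the compression of $\psi$ to such a subspace is approximately multiplicative and approximately norm-preserving on the finite data in question, then perturbs it to an honest homomorphism on the finite-dimensional quotient/subspace. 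Since $\cA$ is finite-dimensional, any linear map into $B(\cH)$ that is multiplicative on the $a_i$'s up to small error can be corrected to a genuine homomorphism with small further perturbation, which absorbs into the $\varepsilon$. This completes the plan.
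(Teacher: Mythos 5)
Your reduction to a finite $\varepsilon$-net and a finite direct sum is fine, but the entire content of the proposition is concentrated in the step you defer: producing, for a single $A\in\bbM_n(\cA)$, a \emph{finite-dimensional} completely contractive homomorphism nearly attaining $\|A\|_{\bbM_n(\cA)}$. Your proposed mechanism for that step --- approximately invariant finite-dimensional subspaces, approximately multiplicative compressions, and then a perturbation of the resulting almost-homomorphism to an honest one --- is a genuine gap. The final assertion, that a linear map which is approximately multiplicative on a basis of $\cA$ can be corrected to a genuine homomorphism by a small perturbation, is a nontrivial stability statement that you neither prove nor cite in a usable form; and even granting it, you would still have to show that the corrected map remains completely contractive and that the correction does not destroy the lower bound on $\|\phi^{(n)}(A)\|$. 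None of this is addressed, so the proof does not close. The paper avoids the issue entirely: it compresses to a finite-dimensional subspace $\cH_1$ to get a unital completely positive map $\omega$ on $C^*(\cA)$ (not a homomorphism), forms the Stinespring dilation $\pi$ of $\omega$, and then restricts $\pi$ to the subspace spanned by $\{[a\otimes h]: a\in\cA,\ h\in\cH_1\}$, which is $\pi(\cA)$-invariant because $\cA$ is an algebra and is finite-dimensional because both $\cA$ and $\cH_1$ are. Restriction of a $*$-homomorphism to an invariant subspace is an exactly multiplicative, exactly completely contractive map, and the lower bound is recovered from $\omega$ via the vectors $[1\otimes\xi_i]$ (this is where the unitization, which you omit, is needed).

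It is worth noting that you were one line away from an even more elementary fix in your own text. If $\psi:\cA\to B(\cK)$ is a completely contractive homomorphism and $\xi_1,\dots,\xi_n\in\cK$ are the coordinates of a vector witnessing $\|\psi^{(n)}(A)\|$ up to $\varepsilon$ (use $\|\psi^{(n)}(A)\xi\|$ rather than a matrix coefficient against a second vector $\eta$), then the subspace
\[
\cK_0=\mathrm{span}\{\xi_1,\dots,\xi_n\}+\sum_{i=1}^{n}\psi(\cA)\xi_i
\]
is finite-dimensional, since $\cA$ is, and it is genuinely $\psi(\cA)$-invariant --- not merely approximately so --- because $\psi(\cA)\psi(\cA)\xi_i\subset\psi(\cA)\xi_i$ for an algebra. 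Your claim that ``$\psi(a_i)\cK_0$ need not lie in $\cK_0$'' is what sends you down the approximate-invariance route, and it is false for this choice of $\cK_0$. The restriction $a\mapsto\psi(a)|_{\cK_0}$ is then a completely contractive homomorphism on a finite-dimensional space with $\|\phi^{(n)}(A)\xi\|=\|\psi^{(n)}(A)\xi\|$, and no perturbation argument is required. Either this observation or the paper's Stinespring argument would repair your proof; as written, it is incomplete.
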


\begin{proof}
By considering the unitization of $\cA$ (which is also finite-dimensional) if necessary, we may assume that $\cA$ is unital \cite{BLM2004}. Since $\cA$ is finite-dimensional, its closed unit ball is compact. Hence, we may choose $\alpha_1,\ldots,\alpha_N\in \cA$ such that $\|\alpha_k\|\leq 1$ for every $1\leq k\leq N$, and with the property that for every $a\in \cA$ with $\|a\|\leq 1$ there is $1\leq k \leq N$ such that
\[
\|a-\alpha_k\|<\varepsilon/d.
\]
In particular, given $A\in \bbM_n(\cA)$ with $\|A\|_{\bbM_n(\cA)}=1$ where $1\leq n \leq d$, we can find $A'=(a'_{ij})_{ij}\in \bbM_n(\cA)$ with the property that each $a'_{ij}$ belongs to the set $\{\alpha_1,\ldots,\alpha_N\}$ and
\[
\|A-A'\|_{\bbM_n(\cA)}<\varepsilon.
\]
Next, note that there is a finite-dimensional subspace $\cH_1$ with the property that
\[
\|P^{(n)}_{\cH_1}A^* AP^{(n)}_{\cH_1}\|\geq (1-\varepsilon)\|A\|^2_{\bbM_n(\cA)}
\]
whenever $A=(a_{ij})_{ij}\in \bbM_n(\cA)$ for some $1\leq n \leq d$ and each $a_{ij}$ belongs to the set $\{\alpha_1,\ldots,\alpha_N\}$.
Define a unital completely positive map
\[
\omega:C^*(\cA)\to B(\cH_1)
\]
as $\omega(t)=P_{\cH_1}t|_{\cH_1}$ for every $t\in C^*(\cA)$. Next, we carefully analyze the Stinespring dilation of $\omega$, and for that purpose we briefly recall the details of its construction.

Define a positive semi-definite bilinear form on the vector space $C^*(\cA)\otimes \cH_1$ as
\[
\Psi \left(\sum_{i} t_i \otimes h_i, \sum_{j} s_j\otimes k_j\right)= \sum_{i,j}\langle \omega(s_j^* t_i)h_i,k_j\rangle_{\cH_1}.
\]
Let 
\[
\cZ=\{ v\in C^*(\cA)\otimes \cH_1: \Psi(v,v)=0\},
\]
which is a subspace of $C^*(\cA)\otimes \cH_1$. The quotient $(C^*(\cA)\otimes \cH_1)/\cZ$ is an inner product space, and we denote its completion by $\cE$. Given $v\in C^*(\cA)\otimes \cH_1$ we denote its image in $\cE$ by $[v]$. Define
\[
\pi:C^*(\cA)\to B(\cE)
\]
via 
\[
\pi(t)\left[\sum_j s_j\otimes h_j\right]=\sum_j ts_j\otimes h_j
\]
for every $t\in C^*(\cA)$. The complete positivity of $\omega$ implies that $\pi$ is well-defined. Furthermore, it is readily verified that $\pi$ is a unital $*$-homomorphism. Now, here is the key point: we denote by $\cH$ the finite-dimensional space spanned by the elements of the form $[a\otimes h]$ for $a\in \cA$ and $h\in \cH_1$. Since $\cA$ is an algebra, the space $\cH$ is invariant for $\pi(\cA)$. Hence, we may define a unital completely contractive homomorphism
\[
\phi:\cA\to B(\cH)
\]
as $\phi(a)=\pi(a)|_{\cH}$ for every $a\in \cA$. It only remains to establish the announced lower bound.

Notice that for $h,k\in \cH_1$ we have
\[
\Psi(1\otimes h,1\otimes k)=\langle \omega(1)h,k\rangle_{\cH_1}=\langle h,k\rangle_{\cH_1}
\]
and thus $\|[1\otimes h]\|_{\cH}=\|h\|_{\cH_1}$ for every $h\in \cH_1$. In particular, if $\xi=(\xi_1,\ldots,\xi_n)^\mathrm{t} \in \cH_1^{(n)}$ satisfies $\|\xi\|_{\cH_1^{(n)}}=1$ then the vector
\[
\Xi=(1\otimes \xi_1,\ldots,1\otimes \xi_n)^{\mathrm{t}} \in \cH^{(n)}
\]
also satisfies $\|\Xi\|_{\cH^{(n)}}=1$. Furthermore, if $A\in \bbM_n(\cA)$ then a routine calculation shows that
\begin{align*}
 \langle \omega^{(n)}(A^* A)\xi,\xi\rangle_{\cH_1^{(n)}}= \| \pi^{(n)}(A) \Xi\|_{\cE^{(n)}} ^2=\|\phi^{(n)}(A)\Xi\|_{\cH^{(n)}}^2.
\end{align*}
Hence,
\[
\langle \omega^{(n)}(A^* A)\xi,\xi\rangle_{\cH_1^{(n)}}\leq \|\phi^{(n)}(A)\|^2_{\bbM_n(B(\cH))}
\]
and we conclude that
\[
\|\phi^{(n)}(A)\|_{\bbM_n(B(\cH))}^2\geq \|\omega^{(n)}(A^*A)\|_{\bbM_n(B(\cH_1))}=\|P^{(n)}_{\cH_1}A^* AP^{(n)}_{\cH_1}\|\geq (1-\varepsilon) \|A\|^2_{\bbM_n(\cA)}
\]
whenever $A=(a_{ij})_{ij}\in \bbM_n(\cA)$
where $1\leq n \leq d$ and each $a_{ij}$ belongs to the set $\{\alpha_1,\ldots,\alpha_N\}$.
As noted above, given a general element $A\in \bbM_n(\cA)$ with $1\leq n \leq d$ and $\|A\|_{\bbM_n(\cA)}=1$, we can find $A'=(a'_{ij})_{ij}\in \bbM_n(\cA)$ where each $a'_{ij}$ belongs to the set $\{\alpha_1,\ldots,\alpha_N\}$ and
\[
\|A-A'\|_{\bbM_n(\cA)}< \varepsilon.
\]
In particular, $\|A'\|_{\bbM_n(\cA)}\geq 1-\varepsilon$.
Since $\phi$ is completely contractive, we find
\[
\|\phi^{(n)}(A)\|_{\bbM_n(B(\cH))}\geq \|\phi^{(n)}(A')\|_{\bbM_n(B(\cH))}-\varepsilon\geq (1-\varepsilon)^{3/2}-\varepsilon
\]
and the proof is complete.
\end{proof}

This theorem allows us to prove a uniform estimate for certain representations of operator algebras with the total reduction property. 

\begin{thm}\label{thm4.12}
Let $\cA$ be an operator algebra with the total reduction property and let $\theta:\cA\to B(\cH)$ be a representation. Assume that $\cA/\ker \theta$ is finite-dimensional. Then, there is a positive constant $\Delta$ depending only on $\kappa_\cA$ and $\|\theta\|$ such that $\|\theta\|_{cb}\leq \Delta.$
\end{thm}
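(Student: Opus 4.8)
The plan is to reduce to the finite-dimensional situation via Proposition \ref{prop4.11} and then apply the $C^*$-algebra theory encoded in Theorem \ref{thmGifKSP}. First I would replace $\cA$ by $\cB = \cA/\ker\theta$, a \emph{finite-dimensional} operator algebra. By Lemma \ref{lemkapparep} (applied to the quotient map, whose image is automatically closed since it is finite-dimensional), $\cB$ has the total reduction property and $\kappa_{\cB}(t)\le \kappa_{\cA}(t\|q\|)$ where $q:\cA\to\cB$ is the quotient map; note $\|q\|\le 1$. The representation $\theta$ factors as $\theta = \bar\theta\circ q$ with $\bar\theta:\cB\to B(\cH)$ injective and $\|\bar\theta\|\le\|\theta\|$, and since $\|\theta\|_{cb}\le\|\bar\theta\|_{cb}$ it suffices to bound $\|\bar\theta\|_{cb}$ in terms of $\kappa_{\cA}$ and $\|\theta\|$. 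So from now on $\cA$ is finite-dimensional, $\theta$ is injective, and I must produce $\Delta$.

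The key step is to embed $\cA$ completely boundedly into a finite-dimensional $C^*$-algebra in a controlled way. Apply Proposition \ref{prop4.11} with, say, $\varepsilon = 1/2$ and $d$ to be chosen: this yields a finite-dimensional Hilbert space $\cH_0$ and a completely contractive homomorphism $\phi:\cA\to B(\cH_0)$ with $\|\phi^{(n)}(A)\|\ge (1-\varepsilon)\|A\|$ for all $1\le n\le d$. The subtlety is that Proposition \ref{prop4.11} only gives the lower bound up to level $d$, not complete boundedness of $\phi^{-1}$ on its range. However, $\phi(\cA)\subset B(\cH_0)$ is finite-dimensional; by a standard fact (e.g. \cite{Smi1983} or a dimension-counting argument as in Smith's lemma) a bounded map out of a finite-dimensional operator space is automatically completely bounded with $\|\psi\|_{cb}\le (\dim\cH_0)\|\psi\|$, so in fact it is enough to know $\phi$ is \emph{bounded below} at a single matrix level $n=\dim\cA$ (or $n=1$ combined with finite-dimensionality), and $d=\dim\cA$ suffices to control $\|(\phi|_{\cA})^{-1}\|$. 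Thus $\phi$ is a completely bounded isomorphism of $\cA$ onto the operator algebra $\cC=\phi(\cA)\subset B(\cH_0)$, with $\|\phi\|_{cb}\le 1$ and $\|\phi^{-1}\|_{cb}\le M$ where $M$ depends only on $\varepsilon$ and $\dim\cA$; but of course $\dim\cA$ itself is not controlled, which is the real difficulty (see below).

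Granting that $\cA$ is completely boundedly isomorphic to $\cC\subset B(\cH_0)$ with $\cH_0$ finite-dimensional, the finish goes through Theorem \ref{thm2.04}. Since $\cC$ is a finite-dimensional operator algebra with the total reduction property (it inherits this from $\cA$ via Lemma \ref{lemkapparep}, with $\kappa_{\cC}(t)\le\kappa_{\cA}(t\|\phi^{-1}\|)$), Theorem \ref{thm2.04} provides an invertible $X\in B(\cH_0)$ with $X\cC X^{-1}$ a $C^*$-algebra and $\|X\|\|X^{-1}\|$ bounded by an explicit expression in $\kappa_{\cC}(1)$. Composing, $\cA$ is completely boundedly isomorphic to a finite-dimensional $C^*$-algebra $\fB$. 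Now $\theta$ transports to a representation $\tilde\theta$ of $\fB$ on $\cH$ with $\|\tilde\theta\|$ controlled by $\|\theta\|$ times the relevant isomorphism constants, and by Theorem \ref{thmGifKSP}(2) (every $C^*$-algebra with Kadison's similarity property, in particular every nuclear/finite-dimensional one, satisfies $\|\tilde\theta\|_{cb}\le 128\,\kappa_{\fB}(\|\tilde\theta\|)^2$) we get a bound on $\|\tilde\theta\|_{cb}$, hence on $\|\theta\|_{cb}$, purely in terms of $\kappa_{\cA}$ and $\|\theta\|$.

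The main obstacle is exactly the dependence on $\dim\cA$ in the previous paragraph: Proposition \ref{prop4.11}'s lower bound is stated level-by-level, and the naive use of Smith's $\dim$-factor would make $\Delta$ depend on $\dim\cA$, which is not allowed. I expect the resolution to be that one does \emph{not} need $\phi^{-1}$ to be completely bounded at all: it suffices that $\phi$ is injective and bounded below at the \emph{first} matrix level after passing through the $C^*$-cover, because the real content is that $\phi$ lets us pull back $\theta$ to a representation of $C^*(\cC)=C^*(\phi(\cA))$ on a \emph{finite-dimensional} Hilbert space — and for $C^*$-algebras the similarity constant is governed by $\kappa$ alone via Theorem \ref{thmGifKSP}. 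Concretely, one should set up the argument so that $\theta$ is compared, on finite-dimensional spaces, with a $*$-representation, and the conjugating invertible is controlled by $\kappa_{\cA}(\,\cdot\,)$ through Theorem \ref{thm2.04} applied to $\theta(\cA)\subset B(\cH_\theta)$ directly — $\theta(\cA)$ is a finite-dimensional operator algebra with the total reduction property (Lemma \ref{lemkapparep}), so it is already similar to a $C^*$-algebra with an explicit constant, and $\|\theta\|_{cb}$ is bounded by that similarity constant squared times a universal factor. This last route in fact bypasses Proposition \ref{prop4.11} entirely; I would present that as the clean argument, with $\Delta$ read off from the estimate in Theorem \ref{thm2.04} evaluated at arguments involving $\kappa_{\cA}$ composed with $\|\theta\|$.
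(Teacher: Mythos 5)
You correctly isolate the central difficulty (Proposition \ref{prop4.11} controls $\phi^{-1}$ only level by level, and a Smith-type factor of $\dim \cH_0$ would ruin the uniformity), but the argument you ultimately commit to does not close it. Your ``clean argument'' applies Theorem \ref{thm2.04} to $\theta(\cA)\subset B(\cH)$ directly. This is not legitimate: Theorem \ref{thm2.04} requires the \emph{ambient Hilbert space} to be finite-dimensional (its proof runs through Gifford's structure theory for algebras of compact operators), whereas here only the algebra $\theta(\cA)\simeq \cA/\ker\theta$ is finite-dimensional and $\cH$ may well be infinite-dimensional, in which case $\theta(\cA)$ need not consist of compact operators (it can contain infinite-rank idempotents). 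Producing a finite-dimensional \emph{carrier space} for $\cA/\ker\theta$ is precisely the job of Proposition \ref{prop4.11}, so it cannot be bypassed. Moreover, even if you could conjugate $\theta(\cA)$ to a $C^*$-algebra with a controlled constant, you would be left with a bounded homomorphism from $\cA$ \emph{into} a $C^*$-algebra, and nothing upgrades such a map to a completely bounded one --- that is essentially the statement being proved. The automatic complete boundedness in Theorem \ref{thmGifKSP} applies to bounded homomorphisms \emph{out of} a $C^*$-algebra with the total reduction property, so your final estimate ``$\|\theta\|_{cb}$ is bounded by the similarity constant squared'' is unsupported.

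The resolution, which you come very close to stating, is that one needs $\phi^{-1}$ to be bounded but never completely bounded. Apply Proposition \ref{prop4.11} with $d=1$ and $\varepsilon=1/2$ to obtain a completely contractive isomorphism $\phi:\cA/\ker\theta\to \cB\subset B(\cH')$ with $\cH'$ finite-dimensional and merely $\|\phi^{-1}\|\leq 2$. Conjugate $\cB$ to a $C^*$-algebra $X\cB X^{-1}$ via Theorem \ref{thm2.04}, with $\|X\|\,\|X^{-1}\|\leq \Delta_0$ for a constant $\Delta_0$ depending only on $\kappa_\cA$. Now factor
\[
\widehat{\theta}=\bigl(\widehat{\theta}\circ \phi^{-1}\circ \mathrm{Ad}_X^{-1}\bigr)\circ\bigl(\mathrm{Ad}_X\circ \phi\bigr).
\]
The first factor is a bounded homomorphism whose \emph{domain} is the $C^*$-algebra $X\cB X^{-1}$, which has the total reduction property with $\kappa_{X\cB X^{-1}}(t)\leq \kappa_\cA(\Delta_0 t)$ by Lemma \ref{lemkapparep}; Theorem \ref{thmGifKSP}(2) then bounds its completely bounded norm by $128\,\kappa_\cA(2\Delta_0^2\|\theta\|)^2$, so the merely-bounded map $\phi^{-1}$ is absorbed into a setting where boundedness upgrades to complete boundedness for free. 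The second factor is completely bounded with norm at most $\Delta_0$, since $\phi$ is completely contractive. This yields $\Delta=128\,\Delta_0\,\kappa_\cA(2\Delta_0^2\|\theta\|)^2$, depending only on $\kappa_\cA$ and $\|\theta\|$ as required.
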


\begin{proof}
Let $\widehat{\theta}:\cA/\ker \theta\to B(\cH)$ be the representation induced by $\theta$. Then, we have $\|\theta\|_{cb}=\|\widehat{\theta}\|_{cb}$.  By Proposition~\ref{prop4.11}, there is a finite-dimensional Hilbert space $\cH'$, an operator algebra $\cB\subset B(\cH')$ and completely contractive isomorphism
\[
\phi:\cA/\ker \theta\to\cB 
\]
with $\|\phi^{-1}\|\leq 2$. Therefore, $\cB$ has the total reduction property and 
\[
\kappa_\cB(t)\leq \kappa_{\cA/\ker \theta}(\|\phi\| t)\leq \kappa_{\cA}(t)
\]
for every $t\geq 0$, by Lemma \ref{lemkapparep}. Then, an application of Theorem~\ref{thm2.04} yields an invertible operator $X\in B(\cH')$ such that 
$X\cB X^{-1}$ is a $C^*$-algebra and
\[
\|X\| \|X^{-1}\|\leq \Delta_0
\]
for some positive constant $\Delta_0$ depending only on $\kappa_\cA(1)$. We will use the following notation
\[
\mathrm{Ad}_X(T)=XTX^{-1}, \quad T\in B(\cH').
\]
We see that
\[
\|\mathrm{Ad}_X\|_{cb}\leq \Delta_0, \quad \|\mathrm{Ad}^{-1}_X\|_{cb}\leq \Delta_0.
\]
Then, $X\cB X^{-1}=\mathrm{Ad}_X(\cB)$ has the total reduction property and
\begin{align*}
\kappa_{X\cB X^{-1}}(t)&\leq \kappa_{\cB}(\|\mathrm{Ad}_X\| t)\\
&\leq \kappa_{\cA}(\Delta_0 t)
\end{align*}
for every $t\geq 0$, again by Lemma \ref{lemkapparep}.

Now, by virtue of Theorem \ref{thmGifKSP}, we see that the map
\[
\widehat{\theta}\circ \phi^{-1}\circ \mathrm{Ad}_X^{-1}:X\cB X^{-1}\to B(\cH)
\]
is completely bounded with
\begin{align*}
\|\widehat{\theta}\circ \phi^{-1}\circ \mathrm{Ad}_X^{-1}\|_{cb}&\leq  128\kappa_{X\cB X^{-1}}(\|\widehat{\theta}\circ \phi^{-1}\circ \mathrm{Ad}_X^{-1}\|)^2\\
&\leq  128\kappa_{\cA}(\Delta_0 \|\widehat{\theta}\circ \phi^{-1}\circ \mathrm{Ad}_X^{-1}\|)^2\\
&\leq 128\kappa_{\cA}(\Delta_0 \|\phi^{-1}\| \|\mathrm{Ad}_X^{-1} \| \|\theta\|)^2\\
&\leq  128\kappa_{\cA}(2\Delta^2_0 \|\theta\|)^2.
\end{align*}
Finally, note that
\[
\widehat{\theta}=\widehat{\theta}\circ \phi^{-1}\circ \mathrm{Ad}_X^{-1}\circ \mathrm{Ad}_X\circ \phi.
\]
so that
\begin{align*}
\|\widehat{\theta}\|_{cb}&\leq \| \mathrm{Ad}_X\|_{cb} \|\phi\|_{cb}\| \widehat{\theta}\circ \phi^{-1}\circ \mathrm{Ad}_X^{-1}\|_{cb}\\
&\leq  128 \Delta_0\kappa_{\cA}(2\Delta^2_0 \|\theta\|)^2
\end{align*}
so we may take 
\[
\Delta=128 \Delta_0\kappa_{\cA}(2\Delta^2_0 \|\theta\|)^2.
\]
\end{proof}

We can now establish the main result of this section, which says that for operator algebras with the total reduction property, representations with residually finite-dimensional ranges are necessarily completely bounded.

\begin{cor}\label{cor4.15}
Let $\Lambda\neq \varnothing$ be a set and let $\mathbf{k}:\Lambda\to \bbN$ be a function. Let $\cA$ be an operator algebra with the total reduction property and let $\theta:\cA\to \cM_{\bf{k}}$ be a representation. Then, $\theta$ is completely bounded.
\end{cor}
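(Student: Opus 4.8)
The plan is to reduce the general representation $\theta:\cA\to\cM_{\mathbf{k}}$ to a family of finite-dimensional representations and then apply Theorem \ref{thm4.12} uniformly. First I would invoke Lemma \ref{lem4.05}: since $\cM_{\mathbf{k}}=\prod_\lambda\bbM_{k(\lambda)}$ is residually finite-dimensional, witnessed by the family of completely contractive (indeed completely isometric on the product) component $*$-homomorphisms $q^{\mathbf{k}}_\lambda:\cM_{\mathbf{k}}\to\bbM_{k(\lambda)}$, the map $\theta$ is completely bounded if and only if $\sup_\lambda\|q^{\mathbf{k}}_\lambda\circ\theta\|_{cb}<\infty$. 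So it suffices to bound $\|q^{\mathbf{k}}_\lambda\circ\theta\|_{cb}$ uniformly in $\lambda$.

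Next, fix $\lambda\in\Lambda$ and consider the representation $\theta_\lambda:=q^{\mathbf{k}}_\lambda\circ\theta:\cA\to\bbM_{k(\lambda)}$. Its range lies in the finite-dimensional algebra $\bbM_{k(\lambda)}$, so $\cA/\ker\theta_\lambda$ is finite-dimensional (it embeds, via the induced injective homomorphism, into $\bbM_{k(\lambda)}$). Therefore Theorem \ref{thm4.12} applies and gives a constant $\Delta_\lambda$ depending only on $\kappa_\cA$ and $\|\theta_\lambda\|$ such that $\|\theta_\lambda\|_{cb}\leq\Delta_\lambda$. The point is that this dependence is \emph{uniform}: since $q^{\mathbf{k}}_\lambda$ is completely contractive (a $*$-homomorphism of $C^*$-algebras), we have $\|\theta_\lambda\|\leq\|\theta\|$ for every $\lambda$, and $\kappa_\cA$ is a fixed increasing function not depending on $\lambda$. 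Tracking the explicit constant from the proof of Theorem \ref{thm4.12}, namely $\Delta=128\Delta_0\,\kappa_{\cA}(2\Delta_0^2\|\theta_\lambda\|)^2$ where $\Delta_0$ depends only on $\kappa_\cA(1)$, and using monotonicity of $\kappa_\cA$, we get
\[
\|q^{\mathbf{k}}_\lambda\circ\theta\|_{cb}\leq 128\,\Delta_0\,\kappa_{\cA}\bigl(2\Delta_0^2\,\|\theta\|\bigr)^2
\]
for all $\lambda\in\Lambda$, a bound independent of $\lambda$.

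Finally, taking the supremum over $\lambda$ and invoking Lemma \ref{lem4.05} in the reverse direction concludes that $\theta$ is completely bounded, with $\|\theta\|_{cb}\leq 128\,\Delta_0\,\kappa_{\cA}(2\Delta_0^2\,\|\theta\|)^2$. I do not expect a serious obstacle here: the substantive work has already been done in Proposition \ref{prop4.11} and Theorem \ref{thm4.12}. The only mild care needed is to confirm that the constant produced by Theorem \ref{thm4.12} genuinely depends on the representation only through its operator norm (so that the uniform bound $\|\theta_\lambda\|\leq\|\theta\|$ can be fed in), which is exactly what its statement asserts; everything else is a routine assembly via Lemma \ref{lem4.05}.
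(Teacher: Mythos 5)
Your proposal is correct and is essentially the paper's own argument: the published proof is literally ``Combine Lemma \ref{lem4.05} and Theorem \ref{thm4.12},'' and you have simply made explicit the two points that make the combination work, namely that each $\cA/\ker(q^{\mathbf{k}}_\lambda\circ\theta)$ is finite-dimensional and that the constant from Theorem \ref{thm4.12} is controlled uniformly because $\|q^{\mathbf{k}}_\lambda\circ\theta\|\leq\|\theta\|$ and $\kappa_\cA$ is increasing.
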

\begin{proof}
Combine Lemma \ref{lem4.05} and Theorem \ref{thm4.12}.
\end{proof}

We give an application to triangular algebras (see Example \ref{egtriangular} for the definition).

\begin{cor}\label{cortriang2}
Let $\cA$ be an operator algebra with the total reduction property and let $\theta:\cA\to B(\cH)$ be a representation such that $\theta(\cA)$ is a triangular algebra. Then, $\theta$ is completely bounded.
\end{cor}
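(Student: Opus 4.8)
The plan is to reduce Corollary \ref{cortriang2} to Corollary \ref{cor4.15} by exploiting the residual finite dimensionality of the range, exactly as advertised. Suppose $\theta:\cA\to B(\cH)$ is a representation with $\cB=\theta(\cA)$ a triangular algebra, say with respect to a net $(P_\lambda)_\lambda$ of finite rank projections increasing strongly to $I$. The first step is to observe that $\cB$ is residually finite-dimensional: by Example \ref{egtriangular}, the maps $\varrho_\lambda(T)=P_\lambda T P_\lambda$ are completely contractive homomorphisms of the triangular algebra containing $\cB$, hence restrict to completely contractive homomorphisms of $\cB$, and $\bigoplus_\lambda \varrho_\lambda$ is completely isometric on $\cB$.

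Next, I would invoke Lemma \ref{lem4.05}: since $\cB$ is residually finite-dimensional with witnessing family $(\varrho_\lambda)_\lambda$, the representation $\theta$ is completely bounded if and only if $\sup_\lambda \|\varrho_\lambda\circ\theta\|_{cb}<\infty$. Each $\varrho_\lambda\circ\theta:\cA\to B(P_\lambda\cH)$ is a representation of $\cA$ into a finite-dimensional $B(P_\lambda\cH)$, and since each $\varrho_\lambda$ is completely contractive we have $\|\varrho_\lambda\circ\theta\|\leq\|\theta\|$ for every $\lambda$. Thus Theorem \ref{thm4.12} applies to each $\varrho_\lambda\circ\theta$, yielding $\|\varrho_\lambda\circ\theta\|_{cb}\leq\Delta$ where $\Delta$ depends only on $\kappa_\cA$ and $\|\theta\|$ — crucially, not on $\lambda$. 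Hence $\sup_\lambda\|\varrho_\lambda\circ\theta\|_{cb}\leq\Delta<\infty$, and Lemma \ref{lem4.05} gives that $\theta$ is completely bounded. (Alternatively, one can phrase this even more directly: $\bigoplus_\lambda\varrho_\lambda$ maps $\cB$ completely isometrically into a product of matrix algebras $\cM_{\mathbf{k}}$, so $(\bigoplus_\lambda\varrho_\lambda)\circ\theta$ is a representation of $\cA$ into $\cM_{\mathbf{k}}$ and Corollary \ref{cor4.15} applies immediately.)

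Honestly I do not expect a real obstacle here: the statement is a packaging of Corollary \ref{cor4.15} together with the observation that triangular algebras are residually finite-dimensional, which is already recorded in Example \ref{egtriangular}. The only point requiring any care is to make sure the finite-dimensionality hypothesis of Theorem \ref{thm4.12} is met — namely that $(\varrho_\lambda\circ\theta)(\cA)$ lives in a finite-dimensional algebra — which is immediate because $P_\lambda$ has finite rank, so $B(P_\lambda\cH)\cong\bbM_{\dim P_\lambda\cH}$ and in particular $\cA/\ker(\varrho_\lambda\circ\theta)$ embeds in a finite-dimensional algebra. So the proof is short and essentially a citation chain.

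\begin{proof}
Let $\cB=\theta(\cA)$, which by hypothesis is a triangular algebra with respect to some net $(P_\lambda)_\lambda$ of finite rank projections increasing strongly to the identity. By Example \ref{egtriangular}, $\cB$ is residually finite-dimensional: the maps $\varrho_\lambda(T)=P_\lambda T P_\lambda$ are completely contractive homomorphisms of $\cB$ into the finite-dimensional algebras $B(P_\lambda\cH)$, and $\bigoplus_\lambda \varrho_\lambda$ is completely isometric on $\cB$. For each $\lambda$, the composition $\varrho_\lambda\circ\theta:\cA\to B(P_\lambda\cH)$ is a representation with $\|\varrho_\lambda\circ\theta\|\leq\|\theta\|$, and $(\varrho_\lambda\circ\theta)(\cA)$ is contained in the finite-dimensional algebra $B(P_\lambda\cH)$, so $\cA/\ker(\varrho_\lambda\circ\theta)$ is finite-dimensional. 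By Theorem \ref{thm4.12}, there is a positive constant $\Delta$ depending only on $\kappa_\cA$ and $\|\theta\|$ such that $\|\varrho_\lambda\circ\theta\|_{cb}\leq\Delta$ for every $\lambda$. In particular $\sup_\lambda\|\varrho_\lambda\circ\theta\|_{cb}\leq\Delta<\infty$, so $\theta$ is completely bounded by Lemma \ref{lem4.05}.
\end{proof}
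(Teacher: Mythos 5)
Your proof is correct and follows essentially the same route as the paper: the paper simply composes $\theta$ with the completely isometric map $\Phi=\bigoplus_\lambda\varrho_\lambda$ from Example \ref{egtriangular} and cites Corollary \ref{cor4.15}, which is exactly your parenthetical alternative, while your main argument just unfolds Corollary \ref{cor4.15} into its ingredients (Lemma \ref{lem4.05} plus the uniform bound of Theorem \ref{thm4.12}). No gaps.
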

\begin{proof}
By Example \ref{egtriangular}, we know that there is a completely isometric homomorphism\linebreak $\Phi:~\theta(\cA)~\to~\cL_{\bf{}k}$, for some net $\mathbf{k}:\Lambda\to \bbN$. Thus, $\Phi\circ \theta$ is completely bounded by virtue of Corollary~\ref{cor4.15}. Since $\Phi$ is completely isometric, we conclude that $\theta$ is completely bounded as well.
\end{proof}

In closing, we exhibit an example showing that the total reduction property cannot simply be removed from the assumptions of Corollary \ref{cor4.15}.

\begin{eg}\label{E:trprfd}
For each $n\in \bbN$, let $t_n:\bbM_n\to \bbM_n$ denote the transpose map. Then, it is well-known that $\|t_n\|=1$ and
\[
\|t_n^{(n)}\|=n.
\]
Next, let $\cA_n\subset \bbM_{2n}$ denote the unital subalgebra consisting of elements of the form
\[
\begin{bmatrix}
\lambda I_n & A\\
0 & \lambda I_n
\end{bmatrix}
\]
where $\lambda\in \bbC, A\in \bbM_{n}$. The map $\theta_n:\cA_n\to \bbM_{2n}$ defined as
\[
\theta_n\left(\begin{bmatrix}
\lambda I_n & A\\
0 & \lambda I_n
\end{bmatrix} \right)=\begin{bmatrix}
\lambda I_n & t_n(A)\\
0 & \lambda I_n
\end{bmatrix}, \quad \lambda\in \bbC, A\in \bbM_{2n}
\]
is easily verified to be a unital homomorphism with the property that $\|\theta_n\|=1$ and 
\[
\|\theta_n^{(n)}\|\geq \|t_n^{(n)}\|=n.
\]
Consequently, we see that the map $\Theta:\prod_n \cA_n\to \prod_n \bbM_{2n}$ defined as
\[
\Theta(a_n)_n=(\theta_n(a_n))_n, \quad (a_n)_n\in \prod_n \cA_n
\]
is a unital bounded representation which is not completely bounded. Finally, fix $m\in \bbN$ and note that the subspace $\bbC^m\oplus \{0\}$ is invariant for $\cA_m$ yet it does not admit an invariant topological complement as a straightforward calculation establishes. Therefore, $\cA_m$ does not have the total reduction property. Since $\cA_m$ is a closed two-sided ideal of $\prod_n \cA_n$, we conclude from Theorem \ref{thmideal} that $\prod_n \cA_n$ does not have the total reduction property either.
\qed
\end{eg}
\bibliographystyle{plain}
\bibliography{2015_11papers}

\end{document}